\def\namedlabel#1#2{\begingroup
#2%
\def\@currentlabel{#2}%
\phantomsection\label{#1}\endgroup
}
\theoremstyle{theorem}
\newtheorem{theorem}{Theorem}[section]
\newtheorem{corollary}[theorem]{Corollary}
\newtheorem{lemma}[theorem]{Lemma}
\newtheorem{proposition}[theorem]{Proposition}
\newtheorem{theoremx}{Theorem}
\theoremstyle{definition}
\newtheorem{setup}[theorem]{Setup}
\newtheorem{example}[theorem]{Example}
\newtheorem{remark}[theorem]{Remark}
\numberwithin{equation}{subsection}
\newtheorem{questionx}[theoremx]{Question}
\newcommand{\set}[1]{\left\{ #1 \right\}}
\renewcommand{\(}{\left(}
\renewcommand{\)}{\right)}
\newcommand{\NN}{\mathbb{N}}
\newcommand{\ZZ}{\mathbb{Z}}
\newcommand{\QQ}{\mathbb{Q}}
\newcommand{\FF}{\mathbb{F}}
\newcommand{\cM}{\mathscr{M}}
\newcommand{\cN}{\mathscr{N}}
\newcommand{\cK}{K}
\newcommand{\ff}{\boldsymbol{f}}
\newcommand{\m}{\mathfrak{m}}
\newcommand{\n}{\mathfrak{n}}
\newcommand{\kk}{\Bbbk}
\newcommand{\ideala}{\mathfrak{a}}
\newcommand{\idealb}{\mathfrak{b}}
\newcommand{\idealc}{\mathfrak{c}}
\newcommand{\ideald}{\mathfrak{d}}
\newcommand{\f}{\boldsymbol{f} }
\renewcommand{\u}{\mathbf{u}}
\newcommand{\w}{\mathbf{w}}
\newcommand{\Spec}{\operatorname{Spec}}
\newcommand{\Ext}{\operatorname{Ext}}
\newcommand{\InjDim}{\operatorname{inj.dim}}
\newcommand{\Supp}{\operatorname{Supp}}
\newcommand{\Ker}{\operatorname{Ker}}
\newcommand{\Ass}{\operatorname{Ass}}
\newcommand{\IM}{\operatorname{Im}}
\newcommand{\FDer}[1]{\stackrel{#1}{\to}}
\newcommand{\connarrow}{\ar `[d] `[l] `[llld] `[ld] [lld]}
\newcommand{\newLyu}{\widetilde{\lambda}}
\definecolor{blue-violet}{rgb}{0.54, 0.17, 0.89}
\definecolor{Blue}{rgb}{0.01, 0.28, 1.0}
\definecolor{gGreen}{rgb}{0.2, 0.8, 0.2}
\definecolor{Green}{rgb}{0.04, 0.85, 0.32}
\newcommand{\seg}[1]{#1}
\newcommand{\name}[1]{{\slshape #1}}
\begin{document}

\title[Lyubeznik numbers and injective dimension]{Lyubeznik numbers and injective dimension in\\ mixed characteristic}
\author[D.\ J.\ Hern\'andez]{Daniel J.\ Hern\'andez}
\author[L.\ N\'u\~nez-Betancourt]{Luis N\'u\~nez-Betancourt}
\author[F.\ P\'erez]{Felipe P\'erez}
\author[E.\ E.\ Witt]{Emily E.\ Witt}
\maketitle


\begin{abstract}
We investigate the Lyubeznik numbers, and the injective dimension of local cohomology modules, of finitely generated $\mathbb{Z}$-algebras. 
We prove that the mixed characteristic Lyubeznik numbers and the standard ones agree locally for almost all reductions to positive characteristic. 
Additionally, we address an open question of Lyubeznik that asks whether the injective dimension of a local cohomology module over a regular ring is bounded above by the dimension of its support. 
Although we show that the answer is affirmative for several families of $\mathbb{Z}$-algebras, we also exhibit an example where this bound fails to hold.  
This example settles Lyubeznik's question, and illustrates one way that the behavior of local cohomology modules of regular rings of equal characteristic and of mixed characteristic can differ.
\end{abstract}


\section{Introduction}


Since its introduction by Grothendieck in the 1960s, the theory of local cohomology has been an active area of research in commutative algebra,  algebraic geometry, and neighboring fields.  In this paper, we are especially motivated by the following observation:  despite the fact that local cohomology modules of regular rings are typically large (e.g., not finitely generated), they often satisfy strong structural conditions.  More precisely, consider the following conditions on a module $M$ over a ring $R$.

\begin{enumerate}
\item \label{F1} The number of associated primes of $M$ is finite.
\item \label{F2} The Bass numbers of $M$ are finite.
\item \label{F3} $\InjDim_R M \leq \dim \Supp_R M$.\footnote{To avoid contemplating the dimension of $\varnothing$, we  only consider property \eqref{F3} for nonzero $M$.}
\item \label{F4} The local cohomology of $M$ with support in a maximal ideal of $R$ is injective.
\end{enumerate}

\noindent In condition  \eqref{F3} above, we use $\InjDim_R M$ to denote the injective dimension of $M$ (i.e., the length of a minimal injective $R$-resolution of $M$), and $\dim \Supp_R M$ to denote the dimension of the support of $M$ as a topological subspace of $\Spec R$ (i.e., the longest length of a chain of prime ideals in $R$ whose terms are contained in the support of $M$.)

It was shown by Huneke and Sharp in positive characteristic \cite{Huneke}, and by Lyubeznik in equal characteristic zero \cite{LyuDMod} and in unramified mixed characteristic \cite{LyuUMC}, that the local cohomology modules of a regular local ring with support in an arbitrary ideal satisfy conditions \eqref{F1} and \eqref{F2} above.  More recently, it was shown by Bhatt, Blickle, Lyubeznik, Singh, and Zhang that the local cohomology modules of a smooth $\ZZ$-algebra with support in an arbitrary ideal also satisfy condition \eqref{F1} \cite{Todos}.  The extent to which local cohomology modules satisfy \eqref{F3} and \eqref{F4} is a subtle issue, and an overarching theme of this paper;  we recall results regarding conditions \eqref{F3} and \eqref{F4} by Huneke and Sharp, Lyubeznik, and Zhao later in this introduction.

The finiteness condition \eqref{F2} for local cohomology is a crucial ingredient in the definition of the so-called \emph{Lyubeznik numbers}, a family of integer-valued invariants associated to a local ring containing a field.  In addition to encoding important properties of the ambient ring \cite{Kawasaki2,Walther2,Kawasaki,LyuInvariants,W},  Lyubeznik numbers also have interesting geometric and topological interpretations, including connections with singular and \'etale cohomology \cite{GarciaSabbah,B-B} and with simplicial complexes \cite{AGZ, LyuNumMontaner, AMY}.  
It is worth pointing out that properties \eqref{F1}, \eqref{F3}, and \eqref{F4} play an important role in the proofs of these results.  
The finiteness property \eqref{F2} is also used to define the \emph{mixed characteristic Lyubeznik numbers}\footnote{In \cite{NuWiMixChar,SurveyLyuNum}, these invariants are referred to as \emph{Lyubeznik numbers in mixed characteristic}.}, a variant of the (standard) Lyubeznik numbers defined by the second and fourth authors \cite{NuWiMixChar}.
A major obstruction to understanding these invariants has been the uncertainty surrounding properties \eqref{F3} and \eqref{F4} in mixed characteristic.

This article has two major goals:  the first is to investigate the relationship between the (standard) Lyubeznik numbers and the mixed characteristic Lyubeznik numbers in the case in which they are both defined (that is, for local rings of positive characteristic), and the second is to investigate when local cohomology modules satisfy conditions \eqref{F3} and \eqref{F4}.


\subsection{A comparison of Lyubeznik numbers}

Suppose that $(R, \m, \kk)$ is a local ring.  If $R$ contains a field, and may be realized as $R \cong S/I$, where $S$ is a regular local ring containing a field, then the (standard) \emph{Lyubeznik number} of $R$ with respect to integers $i,j \geq 0$ is defined as the $i^{\text{th}}$ Bass number of the local cohomology module $H^{\dim(S) - j}_I(S)$; i.e.,  \[ \lambda_{i,j}(R) = \dim_{\kk} \Ext_S^i(\kk, H_I^{\dim(S) - j}(S)).\]  

In an attempt to study local rings of mixed characteristic (that is, rings of characteristic zero with positive characteristic residue fields), the second and fourth authors proposed the following analogous definition: if the residue field $\kk$ of $R$ has characteristic $p>0$, and $R$ may be realized as $R \cong T/J$, where $T$ is an {unramified} regular local ring of mixed characteristic $p>0$, then the \emph{mixed characteristic Lyubeznik number}\footnote{This terminology is \emph{not} intended to imply that $R$ must be of mixed characteristic in order to define $\newLyu_{i,j}(R)$ (this would be false), but is chosen to emphasize that the integers $\newLyu_{i,j}(R)$ arise from considering $R$ (or $\widehat{R}$) as a quotient of a regular unramified ring of mixed characteristic.} of $R$ with respect to integers $i, j \geq 0$ is defined as 
\[ \newLyu_{i,j}(R)= \dim_{\kk} \Ext_T^i(\kk,H_J^{\dim(T)-j}(T)). \] 

One may then extend these definitions to an arbitrary local ring $(R, \m, \kk)$ by defining \[ \lambda_{i,j} (R) = \lambda_{i,j}(\widehat{R}) \text{ and } \newLyu_{i,j}(R) = \newLyu_{i,j}(\widehat{R}).\] 
(The fact that $\widehat{R}$ can be realized in the specified ways follows from the Cohen structure theorems \cite{Cohen}).  In either case, the Lyubeznik numbers depend only on $R$ and the indices $i$ and $j$, but not on any of the choices made realizing $R$ (or $\widehat{R}$) as a quotient \cite{LyuDMod,NuWiMixChar}.

It is important to note that, although the mixed characteristic Lyubeznik numbers were originally defined to study rings of mixed characteristic, both the standard and the mixed characteristic Lyubeznik numbers are  defined for rings of (equal) characteristic $p>0$.  This observation motivates the following question.

\begin{questionx}
\label{Q LyuNum Intro}
For which rings of positive characteristic is it true that the (standard) Lyubeznik numbers and the mixed characteristic Lyubeznik numbers agree?
\end{questionx}

It is known that Question \ref{Q LyuNum Intro} has a positive answer for Cohen-Macaulay rings and rings of small dimension \cite[Corollary 5.3]{NuWiMixChar}.  
Interestingly enough, however, these types of Lyubeznik numbers need \emph{not} always agree, as exhibited by a certain quotient of a power series over a field of characteristic two by a squarefree monomial ideal \cite[Remark 6.11]{NuWiMixChar}.  

In this article, we consider Question \ref{Q LyuNum Intro} from the perspective of reduction to positive characteristic.  Recall that if $R$ is a finitely generated $\ZZ$-algebra, then we call \[ \FF_p \otimes_{\ZZ} R = R / p R \] the reduction of $R$ to characteristic $p>0$.  In this context, one may specialize Question \ref{Q LyuNum Intro}, and instead ask for which reductions of a given finitely generated $\ZZ$-algebra do the standard and mixed characteristic Lyubeznik numbers agree locally.  
Before stating our general result in this direction, we present an illustrative example.

\begin{example}[Cohen-Macaulay rings] \label{CM: e}  If a finitely-generated $\ZZ$-algebra $R$ is Cohen-Macaulay, then so are its reductions $R/pR$ for all $p \gg 0$ \cite[Theorem 2.3.5]{HHCharZero}.  In light of this, the aforementioned result {\cite[Corollary 5.3]{NuWiMixChar}} shows that if $p \gg 0$ and $A$ is the localization of $R/pR$ at a prime ideal, then 
$\newLyu_{i,j} \(A \) = \lambda_{i,j}\( A \)$ for all integers $i, j \geq 0$.  In fact, the vanishing of local cohomology modules described in \cite[Theorem III.4.1]{P-S} implies that both of these invariants are zero unless $j=\dim(R)$.
\end{example}

As suggested by this example, the standard and mixed characteristic Lyubeznik numbers agree locally for almost all reductions to characteristic $p>0$ of a given finitely-generated $\ZZ$-algebra.

\begin{theoremx}[cf.\,Theorem \ref{Thm Lyubeznik Numbers Refined}]\label{Thm Lyubeznik Numbers}  If $R$ is a finitely generated $\ZZ$-algebra, then there exists a finite set of prime integers $W$ with the following property:  If $p$ is prime and not contained in $W$, and $A$ is the localization of $\FF_p \otimes_{\ZZ} R$ at a prime ideal, then $\newLyu_{i,j}(A) = \lambda_{i,j}(A)$ for all integers $i,j \geq 0$.
\end{theoremx}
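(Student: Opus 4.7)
The plan is to realize $\widehat{A}$ via compatible presentations as a quotient of an unramified mixed characteristic regular local ring $T$ and of its modular reduction $S = T/pT$, and then to show that for all but finitely many $p$ two natural spectral sequences tie the two local cohomology theories together tightly enough to force the Bass numbers to match.

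Writing $R = \ZZ[x_1,\ldots,x_n]/\tilde{I}$, let $\mathfrak{p} \subset \FF_p \otimes_{\ZZ} R$ be prime, let $\mathfrak{P} \subset \ZZ[x_1,\ldots,x_n]$ be its preimage (so $p, \tilde{I} \subseteq \mathfrak{P}$), and set $\tilde{T} = \ZZ[x_1,\ldots,x_n]_{\mathfrak{P}}$ and $\tilde{S} = \tilde{T}/p\tilde{T}$, with completions $T$ and $S$. Because $\tilde{T}/p\tilde{T}$ is regular of dimension $\dim \tilde{T} - 1$, the element $p$ extends to a regular system of parameters for $\tilde{T}$; thus $T$ is unramified mixed characteristic with uniformizer $p$, $S = T/pT$ is its reduction, $\dim T = \dim S + 1$, and one obtains compatible presentations $\widehat{A} = T/L$ with $L = (p) + \tilde{I}T$, and $\widehat{A} = S/I$ with $I = \tilde{I}S$.

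The exceptional set $W$ is defined via the finite associated primes theorem of \cite{Todos}: for each $k$, the global local cohomology module $H^{k}_{\tilde{I}}\bigl(\ZZ[x_1,\ldots,x_n]\bigr)$ has finitely many associated primes, and we let $W$ be the finite set of rational primes appearing in $Q \cap \ZZ$ for some such associated prime $Q$. Crucially, $W$ depends only on $R$, not on $\mathfrak{p}$. For $p \notin W$, multiplication by $p$ is injective on every $H^{k}_{\tilde{I}}(\ZZ[x])$, and by flatness of localization and completion this regularity passes to $H^{k}_{\tilde{I}T}(T)$. With this in hand, factor $\Gamma_{L} = \Gamma_{(p)} \circ \Gamma_{\tilde{I}T}$ (an element is $L$-torsion iff it is simultaneously $(p)$-torsion and $\tilde{I}T$-torsion) and apply the resulting Grothendieck spectral sequence
\[
E_2^{s,k} = H^{s}_{(p)}\bigl(H^{k}_{\tilde{I}T}(T)\bigr) \Longrightarrow H^{s+k}_L(T).
\]
Only $s = 0, 1$ contribute, and $p$-regularity forces $H^{0}_{(p)} = 0$; hence the spectral sequence collapses to give $H^{n}_L(T) \cong H^{n-1}_{\tilde{I}T}(T)[1/p] / H^{n-1}_{\tilde{I}T}(T)$. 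Setting $M = H^n_L(T)$, a direct calculation using $p$-regularity (combined with the long exact sequence from $0 \to T \xrightarrow{p} T \to S \to 0$) shows that $p$ acts surjectively on $M$ and that $M[p] \cong H^{n-1}_{\tilde{I}T}(T)/p H^{n-1}_{\tilde{I}T}(T) \cong H^{n-1}_{I}(S)$.

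Finally, apply the change-of-rings spectral sequence
\[
E_2^{i,q} = \Ext^{i}_{S}\bigl(\kk, \Ext^{q}_{T}(S, M)\bigr) \Longrightarrow \Ext^{i+q}_{T}(\kk, M).
\]
Since $S = T/(p)$ has $T$-projective dimension $1$, only $q = 0, 1$ contribute; by the preceding calculation, $\Ext^{0}_T(S, M) = M[p] = H^{n-1}_{I}(S)$ and $\Ext^{1}_T(S, M) = M/pM = 0$. The spectral sequence collapses to the $q = 0$ row, giving $\Ext^{i}_T(\kk, H^n_L(T)) \cong \Ext^{i}_S\bigl(\kk, H^{n-1}_{I}(S)\bigr)$. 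Taking $\kk$-dimensions and setting $n = \dim T - j$ (so that $n - 1 = \dim S - j$) yields $\newLyu_{i,j}(A) = \lambda_{i,j}(A)$. The main obstacle is producing a uniform exceptional set $W$ valid for all $\mathfrak{p}$ at once; this is resolved by extracting $W$ from the finite set of associated primes of the \emph{global} modules $H^{k}_{\tilde{I}}(\ZZ[x])$, which is the key input from \cite{Todos}.
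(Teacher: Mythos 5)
Your proof is correct but takes a genuinely different route from the paper's. The paper's Theorem \ref{Thm Lyubeznik Numbers Refined} establishes $\newLyu_{i,j}(A) = \lambda_{i,j}(A)$ via Corollary \ref{Ext2}, whose engine is the explicit Koszul-complex splitting of Lemma \ref{Comp Ext} (namely $\Ext^i_T(\kk, M) \cong \Ext^i_{T/pT}(\kk, M) \oplus \Ext^{i-1}_{T/pT}(\kk, M)$ for $p$-torsion $M$), combined with the long exact sequences of Lemma \ref{isom+SES: L} and an induction on $i$. You instead package everything into two spectral sequences: the composition spectral sequence for $\Gamma_L = \Gamma_{(p)} \circ \Gamma_{\tilde{I}T}$ collapses by $p$-regularity and the bound $\mathrm{cd}((p)) \leq 1$, identifying $H^n_L(T)$ with the $p$-divisible module $H^1_{(p)}(H^{n-1}_{\tilde{I}T}(T))$ whose $p$-torsion is $H^{n-1}_I(S)$; the change-of-rings spectral sequence for $T \twoheadrightarrow S=T/pT$ then has only its $q=0$ row because $M/pM=0$, hence degenerates with no differential or extension problems. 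This is the key cleanliness gain of your approach: by working with the $p$-divisible module $H^n_L(T)$ rather than with the $p$-torsion module $H^{j}_{\ideala}(T/pT)$, you never face the two-row spectral sequence that Lemma \ref{Comp Ext} resolves by exhibiting a canonical Koszul splitting. What the paper's longer argument buys is the extra relation $\newLyu_{i+1,j+1}(R_Q) = \lambda_{i,j}(A)$ asserted in Theorem \ref{Thm Lyubeznik Numbers Refined}, which your argument does not address but which is not part of the statement you were asked to prove. Both approaches define $W$ as the set of primes that are zero divisors on some $H^j_I(S)$ and derive its finiteness from the finiteness of associated primes over $\ZZ[x_1,\ldots,x_n]$; you invoke \cite{Todos} directly, while the paper routes through its own Corollary \ref{finitelyManyBadPrimes: C}.
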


\noindent A fundamental tool in our proof of Theorem \ref{Thm Lyubeznik Numbers} is the fact
that local cohomology modules of polynomial rings over $\ZZ$ have finitely many associated primes \cite{Todos}.

\subsection{The injective dimension of local cohomology modules}  

In light of the fact that the local cohomology of a regular ring containing a field satisfies the structural conditions \eqref{F3} and \eqref{F4},
the following question of Lyubeznik \cite{LyuDMod,LyuUMC} is natural.

\begin{questionx}[Lyubeznik] \label{Q InjDim}  Given an ideal $I$ of a regular local ring $S$, do the conditions \eqref{F3} and \eqref{F4} hold for nonzero $H^j_I(S)$?  
\end{questionx}

This question has been unresolved in the mixed characteristic case for more than two decades.  To the best of our knowledge, the most significant step toward answering Question \ref{Q InjDim} is the result of Zhou \cite[Theorem 5.1]{Zhou}, which says the following:  If $S$ is regular of unramified mixed characteristic, and $H^j_I(S)$ is nonzero for some ideal $I$ of $S$ and integer $j \geq 0$, then the injective dimension of $H^j_I(S)$ is bounded above by $\dim\Supp_S H^j_I(S)+1$.  Furthermore, if $\m$ is a maximal ideal of $S$, then the injective dimension of the iterated local cohomology module $H^i_{\m} H^j_I(S)$ is at most one.  In this article, we prove that the answer to the first part Question C is affirmative, in most cases, for $\ZZ$-algebras that descend from $\QQ$-algebras.

\begin{theoremx}[cf.\,Theorem \ref{positiveInjDim: T}] \label{Thm Injective Module}  
Given an ideal $I$ of $S = \ZZ[x_1, \ldots, x_n]$ with $\QQ \otimes_{\ZZ} H^j_I(S)\neq 0$,  there exists a finite set of primes $W$ with the following property: If $Q$ is a prime ideal in $\Supp_S H^j_I(S)$ not lying over any prime in $W$\footnote{The condition that $\QQ \otimes_{\ZZ} H^j_I(S) \neq 0$ and the finiteness of $W$ imply that the set of all such prime ideals $Q$ is a non-empty open (i.e., dense) subset of $\Supp_S H^j_I(S)$.  See Subsection \ref{DescendsFromQQ: ss} for more details.}, then $\InjDim_S H^j_{I}(S)_Q \leq \dim \Supp_S H^j_{I}(S)_Q$.  
\end{theoremx}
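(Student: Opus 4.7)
The plan is to set $N := H^j_I(S)$ and take $W$ to be the (finite) collection of prime integers $p$ such that $(p) \subseteq P$ for some $P \in \Ass_S N \cup \Ass_S H^{j+1}_I(S)$; finiteness follows from \cite{Todos}. For any $p \notin W$, then, $p$ acts as a nonzerodivisor on both $N$ and $H^{j+1}_I(S)$, and no associated prime of $N$ contains $p$. A prime $Q \in \Supp N$ not lying over $W$ satisfies either $Q \cap \ZZ = 0$ or $Q \cap \ZZ = (p)$ for some $p \notin W$, and I would handle the two cases separately.

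\textbf{The equal characteristic zero case.} If $Q \cap \ZZ = 0$, then $S_Q$ is a regular local ring containing $\QQ$, and $N_Q \cong H^j_{IS_Q}(S_Q)$ by flat base change. The inequality $\InjDim_{S_Q} N_Q \leq \dim \Supp_{S_Q} N_Q$ then follows directly from the known instance of property (3) for local cohomology modules of regular rings containing a field of characteristic zero, as recalled in the introduction.

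\textbf{The mixed characteristic case.} Suppose $Q \cap \ZZ = (p)$ with $p \notin W$. The key observation is a \emph{dimension drop}: since $Q \in \Supp N$, $Q$ contains some $P \in \Ass_S N$, and the choice of $W$ excludes $P \cap \ZZ = (p)$, so $P \cap \ZZ = 0$. Hence every irreducible component of $\Supp_{S_Q} N_Q$ has its generic point outside $V(p)$, which yields
\[
\dim\bigl(\Supp_{S_Q} N_Q \cap V(p)\bigr) \leq d - 1, \qquad d := \dim \Supp_{S_Q} N_Q.
\]
Next, the long exact sequence of local cohomology for $0 \to S \xrightarrow{p} S \to S/pS \to 0$, combined with the nonzerodivisor property of $p$ on $N$ and $H^{j+1}_I(S)$, gives an isomorphism $N' := N/pN \cong H^j_I(S/pS)$; this is a local cohomology module of the regular equal characteristic ring $S/pS = \FF_p[x_1, \ldots, x_n]$, so by the equal characteristic $p$ instance of property (3), $\InjDim_{(S/pS)_Q} N'_Q \leq \dim \Supp_{(S/pS)_Q} N'_Q$. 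The standard change-of-rings formula for the surjection $S_Q \twoheadrightarrow S_Q/pS_Q$ (whose kernel is generated by the nonzerodivisor $p$) then yields $\InjDim_{S_Q} N'_Q \leq \dim \Supp_{S_Q} N'_Q + 1 \leq d$, using $\Supp N'_Q \subseteq \Supp N_Q \cap V(p)$. Finally, applying $\Ext^\bullet_{S_Q}(\kappa(P'), -)$ to $0 \to N_Q \xrightarrow{p} N_Q \to N'_Q \to 0$: for $P' \supseteq (p)$ the action of $p$ on $\kappa(P')$ is zero, so the long exact sequence breaks into short exact pieces yielding $\mu^i(P', N_Q) \leq \mu^i(P', N'_Q) = 0$ for $i > d$; while for $P' \subseteq Q$ with $P' \not\supseteq (p)$ (which forces $P' \cap \ZZ = 0$), the equal characteristic zero argument applied at $P'$ provides the same vanishing. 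Hence $\InjDim_{S_Q} N_Q \leq d$, as desired.

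\textbf{Main obstacle.} The substantive difficulty is the mixed characteristic case, where Zhou's general bound \cite{Zhou} only yields $\InjDim \leq \dim \Supp + 1$. Removing this extra $+1$ is exactly what the hypothesis $\QQ \otimes_\ZZ H^j_I(S) \neq 0$ enables, by forcing every associated prime of $N$ contained in $Q$ to have residue characteristic zero and thereby producing the crucial one-dimensional drop upon intersecting $\Supp_{S_Q} N_Q$ with $V(p)$; without this input the remaining steps would only reproduce Zhou's bound.
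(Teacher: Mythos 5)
Your proof is correct, and it arrives at the result by a route that differs from the paper's in two technical steps, though the key ideas (the choice of $W$ to make $p$ a nonzerodivisor, the resulting dimension drop upon intersecting $\Supp H^j_I(S)_Q$ with $V(p)$, and the appeal to the Huneke--Sharp/Lyubeznik equal-characteristic bound on $T/pT$) coincide. Where the paper treats non-maximal primes of $T = S_Q$ by invoking Zhou's bound $\InjDim \leq \dim\Supp + 1$ together with a one-dimensional drop from localizing further (Lemma \ref{vanishingBass: L}), and treats the maximal ideal via Corollary \ref{Ext nzd} --- an inductive argument that produces an exact equality $\dim_\kk \Ext^i_T(\kk, H^j_{IT}(T)) = \dim_\kk \Ext^{i-1}_{T/pT}(\kk, H^j_{IT}(T/pT))$ --- you instead split primes $P' \subseteq Q$ by whether they contain $p$. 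For $P' \supseteq (p)$, the long exact sequence coming from $0 \to N_Q \xrightarrow{\,p\,} N_Q \to N'_Q \to 0$ gives the inequality $\mu^i(P', N_Q) \leq \mu^i(P', N'_Q)$ (since $p$ kills $\kappa(P')$), and the standard change-of-rings formula $\InjDim_{S_Q} N'_Q = 1 + \InjDim_{(S/pS)_Q} N'_Q$ absorbs the $+1$ against the dimension drop; for $P' \not\supseteq (p)$, you apply the characteristic zero result at $P'$ directly. The upshot is that your argument sidesteps Zhou's theorem entirely, replacing it with a classical homological-algebra change-of-rings fact, and handles all primes containing $p$ (maximal or not) uniformly. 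What you give up is the exact Bass-number identity of Corollary \ref{Ext nzd}, which the paper also deploys in its proof of Theorem \ref{Thm Lyubeznik Numbers Refined}; your inequality suffices for Theorem \ref{Thm Injective Module} alone. One minor remark: your $W$ need only account for the associated primes of $H^j_I(S)$ and $H^{j+1}_I(S)$, which is in fact a mild refinement of the paper's choice.
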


We also show that local cohomology modules satisfy condition \eqref{F4} in several cases; see Proposition \ref{ILC: p} for details.  
Moreover, although we have identified many cases with a positive answer to Question \ref{Q InjDim}, we also construct an example in which properties  \eqref{F3} and \eqref{F4} fail to hold (cf. the example presented in \cite[Remark 6.11]{NuWiMixChar}).    

\begin{theoremx}[cf.\,Theorem \ref{exactCounterexample: T}] \label{Thm Counter-Example}
There exists an ideal $I$ of a regular local ring $(T, \m)$ of mixed characteristic, and nonnegative integers $i$ and $j$,
for which $\InjDim_S H^j_I(T)>\dim\Supp_S H^j_I(T)$, and such that the iterated local cohomology module $H^i_{\m} H^j_I(T)$ is not injective.
\end{theoremx}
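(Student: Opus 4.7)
The plan is to mimic and extend the construction in \cite[Remark 6.11]{NuWiMixChar}, where a squarefree monomial ideal $I_0$ in a power series ring over a field of characteristic two produces a discrepancy between its standard and mixed characteristic Lyubeznik numbers. Specifically, I would take such an $I_0$ with monomial generators viewed as elements of $\ZZ[x_1, \ldots, x_n]$, and lift the example by setting $T = V[[x_1, \ldots, x_n]]$ for $V$ a complete DVR of mixed characteristic $(0,2)$ with residue field $\kk$, and $I = I_0 T$. Because $I$ is generated by monomials in the $x_i$, the \v{C}ech complex on those generators computes $H^j_I(T)$ and endows each cohomology module with a natural $\ZZ^n$-grading, which is the structural feature that will make Bass numbers computable.

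The key technical step is to show that the Bass numbers $\dim_{\kk} \Ext^i_T(\kk, H^j_I(T))$ — i.e., the mixed characteristic Lyubeznik numbers of $T/I$ — are nonzero in a position $i$ strictly larger than $\dim \Supp_T H^j_I(T)$ for an appropriate choice of $j$. To transfer the characteristic-two calculation of \cite[Remark 6.11]{NuWiMixChar} into $T$, I would exploit the short exact sequence $0 \to T \xrightarrow{2} T \to T/2T \to 0$ and the resulting long exact sequence
$$ \cdots \to H^j_I(T) \xrightarrow{2} H^j_I(T) \to H^j_I(T/2T) \to H^{j+1}_I(T) \to \cdots. $$
Because $T/2T$ is a regular local ring of characteristic two into which $I_0$ embeds as the original squarefree monomial ideal, \cite[Remark 6.11]{NuWiMixChar} supplies the Bass numbers and support dimensions downstairs. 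A careful analysis of how multiplication-by-$2$ and the connecting maps act on the $\ZZ^n$-graded pieces then transfers a nonvanishing Bass number into $H^j_I(T)$, in a bidegree that cannot be supported by the (unchanged) combinatorial description of $\Supp_T H^j_I(T)$. This forces $\InjDim_T H^j_I(T) > \dim \Supp_T H^j_I(T)$.

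The second conclusion — that some iterated local cohomology module $H^i_\m H^j_I(T)$ is not injective — should follow in parallel. Using the Grothendieck composition spectral sequence $E_2^{p,q} = H^p_\m H^q_I(T) \Rightarrow H^{p+q}_\m(T)$ together with the nonvanishing $\Ext^i_T(\kk, H^j_I(T))$ class identified above, one isolates an $\Ext^s_T(\kk, H^i_\m H^j_I(T))$ with $s \geq 1$. The existence of such a nonzero class witnesses the failure of injectivity, and one obtains a concrete pair $(i,j)$ at which both claimed pathologies occur.

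The main obstacle will be the explicit bookkeeping within the long exact sequence above: while the combinatorial structure of monomial local cohomology in equal characteristic is classical, carefully isolating the extension classes created by multiplication-by-$2$ that survive to give extra Bass numbers requires delicate tracking. One must also verify that the mixed characteristic computation genuinely produces a Bass number in a position higher than the support allows, rather than merely shifting indices, so that both the injective dimension inequality and the failure of injectivity of the iterated local cohomology follow from the same monomial example.
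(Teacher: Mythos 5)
Your instinct to lift the example of \cite[Remark 6.11]{NuWiMixChar} into a mixed characteristic power series ring over a complete DVR with residue characteristic $2$, and to exploit the long exact sequence attached to $0 \to T \xrightarrow{2} T \to T/2T \to 0$, is exactly the right starting point and matches the paper. However, your plan then diverges from the paper and leaves two genuine gaps. First, you never determine $\dim \Supp_T H^j_I(T)$, which is the other side of the inequality you need. The paper's central structural computation (Lemma \ref{Supp Zero}) is that, for the specific Reisner ideal $I$ with $j=4$, the support of $H^4_I(T)$ is precisely the maximal ideal, hence zero-dimensional. This fact does all the heavy lifting: it makes $H^4_{IT}(T) = H^0_{\m T} H^4_{IT}(T)$ automatically, so failure of injectivity of this single module answers both parts of the question at $(i,j)=(0,4)$, and together with Zhou's bound $\InjDim \leq \dim\Supp + 1$ it pins down the injective dimension to be exactly $1 > 0$. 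Without establishing that the support is zero-dimensional, you cannot compare the injective dimension to anything, and your proposal gives no mechanism to compute it.

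Second, the machinery you propose for detecting non-injectivity is both heavier and less likely to close than what is actually needed. The paper does not track $\ZZ^n$-graded extension classes through the multiplication-by-$2$ map, and does not use the Grothendieck composition spectral sequence; instead, it invokes a clean criterion (\cite[Lemma 4.2]{NuWiMixChar}, recalled as Remark \ref{Rem Sub Injective}): a local cohomology $D$-module over a power series ring over a complete DVR that has finite Bass numbers and is supported only at the maximal ideal is injective if and only if multiplication by $p$ is surjective on it. The paper then shows multiplication by $2$ is not surjective on $H^4_I(S)$ by a diagram chase in the four-term exact sequence $0 \to H^3_I(S)\to H^3_I(S_2)\to H^4_{I+2S}(S)\to H^4_I(S)\to 0$, reducing to the known non-surjectivity of multiplication by $2$ on $H^4_{I+2S}(S)$. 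Your proposed spectral sequence argument to produce a nonvanishing $\Ext^s_T(\kk, H^i_\m H^j_I(T))$ with $s\geq 1$ is not substantiated; in particular, nonvanishing on the $E_2$ page does not obviously survive to give the class you want, and even if it did you would still need to rule out the possibility that the relevant $H^i_\m H^j_I(T)$ is injective with that $\Ext$ accounted for by a nonzero $H^0_\m$. You should replace the graded bookkeeping and spectral sequence steps with: (a) a direct support computation showing $\Supp_T H^4_I(T) = \{\m T\}$, and (b) the non-surjectivity of multiplication by $2$ combined with the injectivity criterion.
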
 
\noindent This theorem establishes a negative answer to both parts of Question \ref{Q InjDim}, effectively settling it.
Furthermore, it exhibits a previously-unknown way in which regular rings of equal characteristic and those of mixed characteristic can behave differently.

We conclude this introduction by mentioning one further result, which may be regarded as an extension of the recent noted work on smooth $\ZZ$-algebras \cite{Todos}; see Section \ref{Bock} for the proof.

\begin{theoremx} \label{Thm Ass}
Given ideals $I_1,\ldots,I_t$ of $S=\ZZ[x_1,\ldots,x_n]$, and nonnegative integers $j_1,\ldots,j_t$, the $t$-fold iterated local cohomology module $H^{j_t}_{I_t}\cdots H^{j_1}_{I_1}(S)$ has only finitely many associated primes.
\end{theoremx}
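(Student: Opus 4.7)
The plan is to argue by induction on $t$. The base case $t=1$ is precisely the main result of \cite{Todos}, which asserts that the local cohomology modules $H^{j_1}_{I_1}(S)$ of the smooth $\ZZ$-algebra $S$ have only finitely many associated primes. For the inductive step, set $M := H^{j_{t-1}}_{I_{t-1}}\cdots H^{j_1}_{I_1}(S)$ and $N := H^{j_t}_{I_t}(M)$; by the inductive hypothesis, $\Ass_S(M)$ is finite, and the goal is to bound $\Ass_S(N)$.

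I would partition $\Ass_S(N)$ according to the characteristic of the residue field. Associated primes $\mathfrak{P}$ with $\mathfrak{P}\cap\ZZ = (0)$ correspond bijectively to $\Ass_{S_\QQ}(N \otimes_\ZZ \QQ)$, where $S_\QQ := \QQ[x_1,\ldots,x_n]$. Since local cohomology commutes with the flat extension $\ZZ \hookrightarrow \QQ$, one identifies $N \otimes_\ZZ \QQ$ with $H^{j_t}_{I_t S_\QQ}\cdots H^{j_1}_{I_1 S_\QQ}(S_\QQ)$, an iterated local cohomology module over the smooth $\QQ$-algebra $S_\QQ$. By Lyubeznik's arguments in equal characteristic zero \cite{LyuDMod}, this lies in the class of holonomic $D_{S_\QQ/\QQ}$-modules, which is closed under $H^j_I(-)$ and consists of modules of finite length, and in particular with finitely many associated primes. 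This disposes of the characteristic-zero contribution.

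For associated primes $\mathfrak{P}$ with $\mathfrak{P}\cap\ZZ = (q)$ for a positive prime $q$, I would work over $S/qS = \FF_q[x_1,\ldots,x_n]$. The short exact sequence $0 \to S \xrightarrow{q} S \to S/qS \to 0$, applied iteratively through the functors $H^{j_1}_{I_1},\ldots,H^{j_t}_{I_t}$, yields long exact sequences relating the $q$-torsion and $q$-cotorsion of $N$ to iterated local cohomology modules over $S/qS$. The Huneke--Sharp theorem \cite{Huneke}, or equivalently Lyubeznik's theory of $F$-finite $F$-modules, guarantees that iterated local cohomology over $\FF_q[x_1,\ldots,x_n]$ has finitely many associated primes, so for each fixed $q$ the associated primes of $N$ with residue characteristic $q$ are finite in number.

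The main obstacle is showing that only finitely many positive primes $q$ arise as residue characteristics of associated primes of $N$, i.e., that $N$ has nontrivial $q$-torsion for only finitely many $q$. The inductive hypothesis already guarantees this for $M$, since $\Ass_S(M)$ is finite and $M$ is $q$-torsion-free for all $q$ outside the finite set obtained by contracting $\Ass_S(M)$ to $\ZZ$. The task is to propagate this finiteness through $H^{j_t}_{I_t}$: for $q$ outside a larger, but still finite, exceptional set, one must control the connecting Bockstein map $H^{j_t-1}_{I_t}(M/qM) \to H^{j_t}_{I_t}(M)$ arising from the short exact sequence $0 \to M \xrightarrow{q} M \to M/qM \to 0$, and show that the resulting $q$-torsion contribution has only finitely many associated primes. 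I would expect this to follow from a spreading-out argument over $\Spec \ZZ$ in the spirit of Theorems \ref{Thm Lyubeznik Numbers} and \ref{Thm Injective Module} of this paper, combined with the finer methods of \cite{Todos} used to control the behavior of local cohomology of smooth $\ZZ$-algebras under reduction modulo $q$.
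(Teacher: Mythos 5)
There is a genuine gap, and you identify it yourself but then do not close it. Your decomposition of $\Ass_S(N)$ according to residue characteristic matches the paper's strategy, and your treatment of the characteristic-zero fiber (via holonomicity over $\QQ[x_1,\ldots,x_n]$, cf.\ \cite[Remark 3.7(i)]{LyuDMod}) and of each individual positive characteristic $q$ is correct in outline. However, the step that you label the ``main obstacle''---showing that only finitely many positive primes $q$ occur as $\mathfrak{P} \cap \ZZ$ for $\mathfrak{P}\in\Ass_S(N)$---is precisely the mathematical content of Theorem~\ref{Thm Ass}, and saying that you ``would expect this to follow from a spreading-out argument'' does not discharge it. The difficulty is real: even knowing that $\Ass_S(M)$ is finite, the connecting map $H^{j_t-1}_{I_t}(M/qM)\to H^{j_t}_{I_t}(M)$ in the Bockstein long exact sequence can a priori be nonzero for infinitely many $q$, and nothing in the inductive hypothesis controls this. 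The finiteness of $\Ass_S(M)$ propagates torsion-freeness of $M$ for almost all $q$, but that alone does not force the connecting map to vanish.

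The paper resolves this by a fundamentally different mechanism, which has no analogue in your sketch. It compares iterated \emph{local} cohomology $B^{\u}(S)$ with iterated \emph{Koszul} cohomology $A^{\u}(S)$, the latter being finitely generated over $S$ and hence having only finitely many associated primes. The key chain of results (Lemma~\ref{InjectiveSurjective: L}, Proposition~\ref{generatingII: P}, Lemma~\ref{AMBL: L}, Proposition~\ref{surjectionImpliesSurjection: P}, Theorem~\ref{nzd: T}) shows that if a prime $p$ is a nonzerodivisor on every iterated Koszul cohomology module $A^{\w}(S)$, then it is a nonzerodivisor on every $B^{\u}(S)$. The essential input is $F$-module and $D$-module theory in characteristic $p$: Lemma~\ref{AMBL: L} guarantees that the image of Koszul cohomology $D$-generates the local cohomology $F$-module, and Proposition~\ref{surjectionImpliesSurjection: P} uses this to promote surjectivity of $A^{\u}(S)\to A^{\u}(S/pS)$ to surjectivity of $B^{\u}(S)\to B^{\u}(S/pS)$. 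Corollary~\ref{finitelyManyBadPrimes: C} then packages this into the finiteness of the set of bad primes, which is exactly what your inductive step needs and lacks. Without an argument of this type (or a substitute), your proposal does not prove the theorem. Note also that the paper avoids induction on $t$ entirely: the Koszul-to-local comparison is set up uniformly for all $\u\in\ZZ^d$, so the bad-prime set is bounded once and for all, and the fibers over $(0)$ and over each $(p)$ are finite by \cite[Remark 3.7(i)]{LyuDMod} and \cite[Theorem 1.2]{NunezPR} respectively.
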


We note that Theorem \ref{Thm Ass}, and the ideas behind the proof (namely, Corollary \ref{finitelyManyBadPrimes: C}) play a key role in our proofs of Theorem \ref{Thm Lyubeznik Numbers}, Theorem \ref{Thm Injective Module}, and Proposition \ref{ILC: p}.

\section{Local cohomology} 

In this section, we recall some basic properties of local cohomology modules, and refer the reader to \cite{BroSharp} for more details.


\subsection{Koszul and local cohomology} 
\label{Loc Coh} 
The (cohomological) \name{Koszul complex} of an element $f$ of a commutative ring $R$ is the complex $\cK^{\bullet}(f; R)$ given by $0 \longrightarrow R \ {\longrightarrow} \ R \longrightarrow 0$, where the non-trivial map is multiplication by $f$, the left-most copy of $R$ is in degree zero, and the right-most copy in degree one.  More generally, the Koszul complex of a finite sequence $\f = \langle f_1, \cdots, f_s \rangle$ in $R$ on an $R$-module $M$ is defined to be the tensor product of complexes \[  \cK^{\bullet}( \f; M) : = \cK^{\bullet}(f_1;R)\otimes_R \cdots \otimes_R \cK^{\bullet}(f_s;R) \otimes_R M, \] where we regard $M$ as a complex concentrated in degree zero.  The \name{$k$-th Koszul cohomology module of $\f$} is then defined to be $H^k(\f; R) := H^k(K^\bullet (\boldsymbol{f}; R))$.

Given a non-negative integer  $t$, the commutative diagram of $R$-modules 
 \begin{equation}
 \label{mapOnKoszul: e}
 \xymatrix@R=.8cm@C=1cm
{0 \ar[r] & R    \ar@{=}[d] \ar[r]^{f^t}  & R \ar[r] \ar[d]^-{f} & 0  \\  
0 \ar[r] & R   \ar[r]^{f^{t+1}} & R \ar[r] & 0} 
\end{equation}
defines a map of complexes $K^{\bullet}(f^t; R) \to K^{\bullet}(f^{t+1}; R)$, and tensoring maps of this form together, and then tensoring with $M$, produces a map 
\begin{equation}
\label{DirSysDefiningLC: e}
K^\bullet(\ff^t; M) \to K^\bullet(\ff^{t+1}; M),
\end{equation}
where $\f^n$ is the sequence obtained by raising every term in $\f$ to the $n$-th power.   We use $K^{\bullet}(\ff^{\infty}; M)$ to denote the limit complex of this directed system, and we recall that there exists a canonical isomorphism $K^{\bullet}(\ff^{\infty}; M)  \cong \Check{C}^{\bullet}(\f; M)$, 
where $\Check{C}^{\bullet}(M; \f)$ denotes the \name{\v{C}ech complex} of $\ff$ on $M$.  Taking homology in \eqref{DirSysDefiningLC: e} produces a directed system of $R$-modules, and as direct limits commute with homology, we obtain a functorial isomorphism 
\begin{equation}
\label{definitionLC: e}
\lim \limits_{t \to \infty} H^k( K^{\bullet}(\f^t; M) ) \cong H^k( K^{\bullet}(\f^{\infty}; M)  )  \cong H^k( \Check{C}^{\bullet}(\f; M) ).
\end{equation}

These $R$-modules depend only on the ideal $\ideala$ generated by the terms in $\f$, and we refer to these isomorphic objects, denoted by $H^k_{\ideala}(M)$, as the \name{$k$-th local cohomology module of $M$ with support in $\ideala$}.  

\subsection{Long exact sequences in local cohomology}

Assume now that $R$ is Noetherian.
Given an ideal $\ideala$ of $R$ and a short exact sequence $0 \to M \to N \to P \to 0$ of $R$-modules, there is a functorial long exact sequence
\[ \cdots \to H^k_\ideala(M) \to  H^k_\ideala(N) \to  H^k_\ideala(P) \to H^{k+1}_\ideala(M) \to \cdots. \]
Given $f \in R$ and an $R$-module $M$, there is also a long exact sequence, functorial in $M$, 
\begin{equation*} 
\cdots \to H^k_{\ideala+ f R}(M) \to H^k_\ideala(M) \to H^k_\ideala(M_f) \to H^{k+1}_{\ideala+ f R}(M) \to\cdots,
\end{equation*}
where the map from $H^k_\ideala(M)$ to $H^k_\ideala(M_f) \cong H^k_\ideala(M)_f$ is the natural localization map.

\section{Finiteness of associated primes of iterated local cohomology modules}\label{Bock}


The goal of this section is to prove Theorem \ref{Thm Ass}, which extends the recent result of Bhatt, Blickle, Lyubeznik, Singh, and Zhang on the finiteness of the associated primes of local cohomology \cite[Theorem 1.2]{Todos}.  The proof itself appears in the last subsection, while in the earlier subsections, we survey some results from the theory of $F$-modules and $D$-modules.  Our main reference for the theory of $F$-modules is \cite{LyuFmod}.


\subsection{$F$-modules}
\label{F-mod} 
Given a ring $R$ of characteristic $p>0$ and a non-negative integer $e$, let $R^{(e)}$ denote the abelian group $R$ considered as an $R$-bimodule via the rule  $r \cdot x \cdot s  = rxs^{p^e}$.  For the remainder of this subsection, we assume that $R$ is regular, so that the functor $F$ taking an $R$-module $M$ to the (left) $R$-module $F(M) = R^{(e)} \otimes_R M$ is exact.  An \name{$F$-module} consists of an $R$-module $M$ and an $R$-module isomorphism $M \cong F(M)$, which we call the \name{structure morphism} of $M$.  The isomorphism $R \to F(R)$ given by $x \mapsto x \otimes 1$ shows that $R$ is an $F$-module, and in this article, we always regard $R$ as an $F$-module in this way.  A map between $F$-modules $M \cong F(M)$ and $N \cong F(N)$ is an $R$-linear map $M \to N$ that respects the structure morphisms (that is, so that the expected diagram commutes).

Given a map $\alpha: M \to F(M)$ of $R$-modules,  consider the following commutative diagram.
\[ \xymatrix@C=1.5cm@R=1cm{ M \ar[r]^{\alpha} \ar[d]_{\alpha} & F(M) \ar[r]^{F(\alpha)} \ar[d]^{F(\alpha)} & F^2(M) \ar[d]^{F^2(\alpha)} \ar[r]^{\ \ \ F^2(\alpha)}  & \cdots  \\ 
F(M) \ar[r]^{F(\alpha)}  & F^2(M) \ar[r]^{F^2(\alpha)} & F^3(M) \ar[r]^{ \ \ \ F^3(\alpha)}  & \cdots }\]

If $\cM$ denotes the direct limit of the first row of this diagram, the vertical arrows induce an isomorphism $\cM \rightarrow F(\cM)$.  In this case, we say that $\alpha$ \name{generates} the $F$-module $\cM$.

\subsection{Local cohomology as an $F$-module}

The localization of an $F$-module can be regarded as an $F$-module in such a way that the localization map is a map of $F$-modules \cite[Example 1.2]{LyuFmod}.  In particular, the \v{C}ech complex of an $F$-module is a complex of $F$-modules, and therefore, the (iterated) local cohomology of an $F$-module is also an $F$-module.  In Proposition \ref{generating: P} below, which is an adaptation of \cite[Proposition 1.11(b)]{LyuFmod}, we describe another (isomorphic) $F$-module structure on local cohomology.  Before proceeding, we recall some basic facts:  

\begin{remark}[Direct limits and tensor products] 
\label{directLimits: R}
The natural numbers are a basic example of a \emph{filtered poset}.  As such, direct limits of $\NN$-directed systems satisfy certain desirable conditions.  For example, if $\set{M_i}_{i=1}^{\infty}$ and $\set{N_i}_{i=1}^{\infty}$ are two directed systems of $R$-modules, then 
\[ \( \lim_{i \to \infty} N_i \) \otimes_R \( \lim_{i  \to \infty} M_i \) \cong \lim_{i  \to \infty} ( M_i \otimes_R N_i ).\] 
The analogous identity for complexes of $R$-modules also holds \cite[Theorem 4.28]{TwentyFourHours}, and we use these identities without mention in the following remark.
\end{remark}

\begin{remark}[Koszul complexes and the Frobenius functor]
\label{koszul+frobenius: R}
The standard $F$-module structure $R \cong F(R)$ allows us to canonically identify $K^{\bullet}(f^p; R)$ with $F(K^{\bullet}(f; R))$, which gives us two different ways to regard the second row in the following commutative diagram  %
 \[ 
 \xymatrix@R=.8cm@C=1cm
{0 \ar[r] & R    \ar@{=}[d] \ar[r]^{f}  & R \ar[r] \ar[d]^-{f^{p-1}} & 0  \\  
0 \ar[r] & R   \ar[r]^{f^{p}} & R \ar[r] & 0}. 
\]
In other words, there exists a commutative diagram of complexes 
%
\[  \xymatrix@R=.8cm@C=1cm{ 
& K^{\bullet}(f; R)  \ar[dr] \ar[dl] &   \\  K^{\bullet}(f^p; R)  \ar[rr]^-{\cong} & & F ( K^{\bullet}(f; R)).
} \]

More generally, given a finite sequence $\f$ in $R$, the flatness of the Frobenius morphism allows us tensor sequences of this form together to obtain a commutative diagram
\[ \xymatrix@R=.8cm@C=1cm{ 
& K^{\bullet}(\ff; R)  \ar[dr] \ar[dl] &   \\  K^{\bullet}(\ff^p; R)  \ar[rr]^-{\cong} & & F ( K^{\bullet}(\ff; R)).
} \]

Next, fix an $R$-module $M$, and let $\cM$ be the $F$-module generated by $\alpha: M \to F(M)$.    Tensoring the vertical maps above with $\alpha$ produces
\begin{equation}
\label{koszulTriangle: e}
   \xymatrix@R=.8cm@C=1cm{ 
& K^{\bullet}(\ff; M)  \ar[dr] \ar[dl] &   \\  K^{\bullet}(\ff^p; F(M))  \ar[rr]^-{\cong} & & F ( K^{\bullet}(\ff; M)), 
} 
\end{equation}
and considering the directed systems that result from iterating these maps, we see that
\begin{align*}
\lim_{e \to \infty}  F^e(K^{\bullet}(\ff; M)) \cong \lim_{e \to \infty} K^{\bullet}(\ff^{p^e}, F^e(M)) & = \lim_{e \to \infty} \( K^{\bullet}(f^{p^e}; R) \otimes F^e(M) \) \\ 
& \cong \( \lim_{e \to \infty}  K^{\bullet}(\ff^{p^e}; R) \) \otimes \( \lim_{e \to \infty} F^e(M) \) \\
& = \( \lim_{e \to \infty} K^{\bullet}(\ff^{p^e}; R) \) \otimes \cM \\
& \cong \lim_{e \to \infty}  \(  K^{\bullet}(\ff^{p^e}; R) \otimes \cM \) = K^{\bullet}(\ff^{\infty}; \cM).  
\end{align*}
At the level of homology, the isomorphism $K^{\bullet}(\ff^{\infty}, \cM) \cong \lim_{e \to \infty} F^e( K^{\bullet}(\ff; M))$ becomes 
\[ H^k_{\ideala}(\cM) = H^k (K^{\bullet}(\ff^{\infty}, \cM)) \cong  \lim_{e \to \infty} H^k( F^e( K^{\bullet}(\ff; M)) \cong \lim_{e \to \infty} F^e( H^k(\ff; M)),  \] where $\ideala$ is the ideal generated by the terms of $\ff$, and the last directed system is obtained by taking homology of the right-most map in \eqref{koszulTriangle: e}.   
\end{remark}

We summarize the content of Remark \ref{koszul+frobenius: R} below.

\begin{proposition}[cf.\,{\cite[Proposition 1.11(b)]{LyuFmod}}]
\label{generating: P}
Fix a finite sequence $\f$ in $R$ whose terms generate the ideal $\ideala$ of $R$. Fix an arbitrary $R$-linear homomorphism $\alpha : M \to F(M)$, and let $\beta: H^k(\f; M) \to F(H^k(\f; M))$ be the associated map induced by \eqref{koszulTriangle: e}.  If $\cM$ is the $F$-module generated by $\alpha$, and $\cN$ is the $F$-module generated by $\beta$, then there exists an isomorphism of $F$-modules $\cN \cong H^k_{\ideala}(\cM)$ such that \[ \xymatrix@R=.8cm@C=1cm
{ H^k(\f; M)  \ar[r]  \ar[d] & H^k(\f ; \cM) \ar[d] \\ \cN \ar[r]^-{\cong} & H^k_{\ideala}(\cM) }\]
commutes, where $H^k(\f; M) \to \cN$ is the map to the direct limit, $H^k(\f; M) \to H^k(\f; \cM)$ is the functorial map induced by the map to the direct limit $M \to \cM$, and the right-most vertical map is the one given by the natural transformation from Koszul to local cohomology.
\end{proposition}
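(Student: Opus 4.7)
The plan is to show that both $\cN$ and $H^k_{\ideala}(\cM)$ are naturally isomorphic to the same direct limit $\lim_{e \to \infty} F^e(H^k(\f; M))$, with the transition maps determined by $\beta$; the asserted isomorphism $\cN \cong H^k_{\ideala}(\cM)$ then arises tautologically from these two identifications.

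By the definition of the $F$-module generated by a map, $\cN$ is, by construction, the direct limit of the system
\[ H^k(\f; M) \xrightarrow{\beta} F(H^k(\f; M)) \xrightarrow{F(\beta)} F^2(H^k(\f; M)) \to \cdots, \]
and the map $H^k(\f; M) \to \cN$ appearing in the diagram is the structural inclusion into the zeroth term of this limit. On the other hand, the computation at the end of Remark \ref{koszul+frobenius: R} realizes
\[ H^k_{\ideala}(\cM) \cong H^k(K^{\bullet}(\f^{\infty}; \cM)) \cong \lim_{e \to \infty} H^k(F^e(K^{\bullet}(\f; M))), \]
where the transition maps are induced by the right-most map in diagram \eqref{koszulTriangle: e}. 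Since $R$ is regular, $F$ is exact, so it commutes with Koszul cohomology, giving a natural identification $H^k(F^e(K^{\bullet}(\f; M))) \cong F^e(H^k(\f; M))$. Under this identification, the transition maps become $F^e(\beta)$ by the very definition of $\beta$ from \eqref{koszulTriangle: e}. Thus both direct systems coincide, and taking the limit yields an isomorphism $\cN \xrightarrow{\cong} H^k_{\ideala}(\cM)$ of $R$-modules, which is automatically an $F$-module isomorphism because both structure morphisms are induced by the shift functor $F$ applied to the same directed system.

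For the commutativity of the square, I would chase a class $\xi \in H^k(\f; M)$. Going left-then-down, $\xi$ lands in the zeroth stage of the direct system defining $\cN$, which under the isomorphism above is identified with the zeroth stage of $\lim_e F^e(H^k(\f; M)) \cong H^k_{\ideala}(\cM)$. Going right-then-down, $\xi$ is first sent via the functoriality of $H^k(\f; -)$ through the structural map $M \to \cM$ (itself the inclusion into stage zero of the limit defining $\cM$); the Koszul-to-\v{C}ech comparison $H^k(\f; \cM) \to H^k_{\ideala}(\cM) \cong \lim_e H^k(K^{\bullet}(\f^{p^e}; \cM))$ then places its image in stage zero of the same limit. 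Both routes therefore produce the same element.

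The main obstacle I expect is the bookkeeping needed to confirm that the exactness isomorphism $H^k(F^e K^{\bullet}(\f; M)) \cong F^e H^k(\f; M)$ intertwines the transition map induced by \eqref{koszulTriangle: e} with $F^e(\beta)$, and that under the identifications of Remark \ref{koszul+frobenius: R} the cofinal subsystem indexed by powers $\f^{p^e}$ of $\f$ is compatible with the full $\f^t$-indexed system used to define $H^k_{\ideala}$. Both are essentially diagram chases enabled by the flatness of Frobenius, but they require care to write down precisely; once verified, the proposition follows formally.
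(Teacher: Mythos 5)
Your proposal is correct and follows essentially the same route as the paper: the proposition is stated as a summary of Remark \ref{koszul+frobenius: R}, and your argument explicitly invokes that remark's direct-limit computation $\lim_e F^e(K^{\bullet}(\f;M)) \cong K^{\bullet}(\f^{\infty};\cM)$, passes to homology using exactness of Frobenius, and then checks commutativity of the square by tracking the stage-zero term of both directed systems — exactly the chain of identifications underlying the paper's proof. You are right to flag the cofinality of the $p$-power-indexed subsystem and the compatibility of the two $F$-module structures as the technical points requiring care; these are handled implicitly in the paper (and cited to \cite[Proposition 1.11(b)]{LyuFmod}), so your level of detail matches the source.
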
 

\subsection{Iterated local cohomology}  For the rest of this section, we work in the following context.

\begin{setup} 
\label{iteratedLC: s}
Let $S$ be a polynomial ring over $\ZZ$.  For every $t \in \NN$, fix a finite sequence $\f_t$ in $S$ generating an ideal $\ideala_t \subseteq S$.
For all $d>0$ and $\u \in \ZZ^d$, consider the compositions of functors 
\[ A^{\u} ( \ \underline{\hspace{.5cm}} \ )  = H^{u_1}( \f_1 \ ; \  \underline{\hspace{.5cm}} \ ) \circ \cdots \circ  H^{u_d}( \f_d \ ; \ \underline{\hspace{.5cm}} \  ) \text{ and } B^{\u} ( \ \underline{\hspace{.5cm}} \ ) = H^{u_1}_{\ideala_1} \circ \cdots \circ  H^{u_d}_{\ideala_d} ( \ \underline{\hspace{.5cm}} \ ) \]
from the category of $S$-modules to itself.  Note that the natural transformation from Koszul to local cohomology induces a natural transformation of functors $A^{\u} \to B^{\u}$ for every $\u \in \ZZ^d$.  
\end{setup}

\begin{remark}[Induced functors on the reductions of $S$ modulo a prime integer] Fix a prime integer $p>0$, and set $R = S/pS$.  If $M$ is an $R$-module, then $H^k_I(M)$ is also an $R$-module, and is in fact isomorphic (as $R$-modules) to $H^k_{IR}(M)$. The analogous property also holds for Koszul cohomology, and it follows from these observations that, when restricted to the category of $R$-modules, the functor $A^{\u}$ (respectively, $B^{\u}$) may be regarded as an iterated Koszul (respectively, local) cohomology functor on $R$.  It is also worth pointing out that the natural transformation from $A^{\u} \to B^{\u}$ as functors of $S$-modules restricts to a natural transformation of functors of $R$-modules, and is compatible with the one given by considering $A^{\u}$ and $B^{\u}$ as iterated Koszul and local cohomology of $R$-modules.  Finally, we note that the functor $B^{\u}$ not only induces a functor of $R$-modules, but also induces a functor of $F_R$-modules, where $F_R$ denotes the Frobenius functor on the polynomial ring $R$.
\end{remark}

The following technical result is centered on the behavior of iterated local and Koszul cohomology with respect to the sequence $0 \to S \overset{p}{\longrightarrow} S \to S/pS \to 0$, with $p>0$ a prime integer.  This result plays an important role in the proof of Theored \ref{nzd: T} later in this section, and in some sense, is what one needs to overcome the lack, in general, of long exact sequences associated to iterated cohomology.

\begin{lemma}
\label{InjectiveSurjective: L}
Let $G^{\u}$ denote either the functor $A^\u$ or $B^\u$.  Fix a positive integer $d$ and a prime integer $p>0$, and consider the exact sequence of $S$-modules $0 \to S \stackrel{p}{\longrightarrow} S \to S/pS \to 0$.    If either 
\begin{enumerate}
\item[\textup{(1)}] the induced map $G^\u(S) \overset{p}{\longrightarrow} G^{\u}(S)$ is injective for every $1 \leq c \leq d$ and $\u \in \ZZ^c$, or 
\item[\textup{(2)}] the induced map $G^\u(S) \to G^\u(S/pS)$ is surjective for every $1 \leq c \leq d$ and $\u \in \ZZ^c$, 
\end{enumerate}
then for all  $1 \leq c \leq d$ and $\u \in \ZZ^c$, we have an exact sequence
\[
0 \to G^{\u}(S) \overset{p}{\longrightarrow}  G^{\u} (S) \to  G^{\u}(S/pS) \to 0.
\]
\end{lemma}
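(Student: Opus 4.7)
The plan is to induct on $c \in \{1, \ldots, d\}$, using the same template in both the base case and the inductive step: apply the outermost functor $G^{u_1}$ to a short exact sequence of $S$-modules to obtain a long exact sequence, and then invoke hypothesis (1) or (2) to force every connecting homomorphism to vanish, so the long exact sequence collapses into the desired short exact pieces.

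In the base case $c = 1$, I would apply $G^{u_1}$ (either Koszul cohomology $H^{u_1}(\ff_1;\,-)$ or local cohomology $H^{u_1}_{\ideala_1}$) to the given sequence $0 \to S \xrightarrow{p} S \to S/pS \to 0$. Since $K^{\bullet}(\ff_1; S)$ and $\check{C}^{\bullet}(\ff_1; S)$ are complexes of flat $S$-modules, tensoring preserves exactness, and the standard argument produces a long exact sequence
\[
\cdots \xrightarrow{\delta} G^{k}(S) \xrightarrow{p} G^{k}(S) \to G^{k}(S/pS) \xrightarrow{\delta} G^{k+1}(S) \xrightarrow{p} G^{k+1}(S) \to \cdots.
\]
Under hypothesis (1), each $\delta$ lands in the trivial kernel of the subsequent $p$-map and must vanish. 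Under hypothesis (2), each map $G^{k}(S) \to G^{k}(S/pS)$ is surjective, so each $\delta$ has zero image. Either way, the long exact sequence decomposes into the short exact sequences claimed.

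For the inductive step, write $\u = (u_1, \u') \in \ZZ^c$ with $\u' = (u_2, \ldots, u_c) \in \ZZ^{c-1}$. Since hypotheses (1) and (2) are stated for all lengths $c' \leq d$, the inductive hypothesis applied to $\u'$ yields
\[
0 \to G^{\u'}(S) \xrightarrow{p} G^{\u'}(S) \to G^{\u'}(S/pS) \to 0.
\]
Applying $G^{u_1}$ and using the identity $G^{u_1}\bigl(G^{\w}(-)\bigr) = G^{(u_1, \w)}(-)$ coming from the definition of the iterated functor, I obtain a long exact sequence whose terms at index $k$ read $G^{(k, \u')}(S) \xrightarrow{p} G^{(k, \u')}(S) \to G^{(k, \u')}(S/pS)$. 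Hypothesis (1) or (2), applied to the indices $(k, \u') \in \ZZ^c$ for every $k$, forces the connecting maps to vanish exactly as in the base case, and the short exact sequence at $\u$ falls out.

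The one delicate point to verify is that $G^{u_1}$ applied (as a functor of $S$-modules) to the $S$-module $G^{\u'}(S/pS)$ really computes $G^{(u_1, \u')}(S/pS)$. This is handled by the remark preceding the lemma: since $G^{\u'}(S/pS)$ is annihilated by $p$, its Koszul or local cohomology with respect to $\ff_1$ may be computed interchangeably over $S$ or over $R = S/pS$. This compatibility is the only nontrivial ingredient; the rest of the argument is diagram chasing with the two long exact sequences.
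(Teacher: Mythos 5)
Your argument is correct and follows essentially the same route as the paper's proof: both induct on the number of iterated cohomology functors, apply a new long exact sequence in Koszul or local cohomology at each stage, and use hypothesis (1) or (2) to force the connecting maps to vanish so that the long exact sequence collapses into the desired short exact pieces. The paper phrases the induction as running on $d$ while you split off the outermost index $u_1$, and it treats the identification of the relabeled iterated functor implicitly rather than isolating it at the end as you do, but these are cosmetic differences in bookkeeping rather than a different argument.
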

\begin{proof}
Set $R=S/pS$.  We proceed by induction on $d$.   First, suppose that $d=1$.  In this case, the key point is that the functor $G^{u}$ with $u \in \ZZ$ has an associated long exact sequence.  Moreover, if multiplication by $p$ on $G^u(S)$ is injective for every $u \in \ZZ$, or $G^u(S) \to G^u(R)$ is surjective for every $u \in \ZZ$, then the long exact sequence in $G^{u}$ induced by \[ 0 \to S \overset{p}{\longrightarrow} S \to R \to 0 \] splits into short exact sequences, which establishes the lemma when $d=1$.

Next, suppose that the lemma is true for some positive integer $d$, and that either multiplication by $p$ is injective on $G^{\u}(S)$ for every $1 \leq c \leq d+1$ and $\u \in \ZZ^c$, or that $G^{\u}(S) \to G^{\u}(R)$ is surjective for every $1 \leq c \leq d+1$ and $\u \in \ZZ^c$. The inductive hypothesis then implies that
\begin{equation}
\label{SES: e}
0 \to G^{\u}(S) \overset{p}{\longrightarrow}  G^{\u} (S) \to  G^{\u}(R) \to 0 
\end{equation} is exact for every $1 \leq c \leq d$ and $\u \in \ZZ^c$. Thus, to complete the proof, it suffices to show that \eqref{SES: e} is exact when $\u$ is replaced with any $\w \in \ZZ^{d+1}$.  

Fix $\u \in \ZZ^d$.
As in the base case, the long exact sequence in either Koszul or local cohomology (depending on which iterated functor $G^{\u}$ represents) induced by \eqref{SES: e}
splits into short exact sequences of the form $0 \to G^{\w}(S) \overset{p}{\longrightarrow}  G^{\w}(S) \to  G^{\w}(R) \to 0$, with $\w \in \ZZ^{d+1}$,
by our assumptions on the maps induced by $G^{\w}$.
As every sequence of this form with $\w \in \ZZ^{d+1}$ arises in this way, we can conclude the proof.
\end{proof}

The following is an extension of Proposition \ref{generating: P} to the context of iterated cohomology.

\begin{proposition}  
\label{generatingII: P}  Let $R = S/pS$ denote the reduction of $S$ modulo some prime integer $p>0$.    Fix an $F_R$-module $\cM \cong F_R(\cM)$ and $\u \in \ZZ^d$, and let $\beta: A^{\u}(\cM) \to F_R(A^{\u}(\cM))$ be the map obtained by iterating\footnote{In this iterative process, we repeatedly apply the fact that the Frobenius functor is exact.} the one induced by \eqref{koszulTriangle: e}.  If $\cN$ is the $F_R$-module generated by $\beta$, then there exists an isomorphism of $F_R$-modules $\cN \cong B^{\u}(\cM)$ such that \[ \xymatrix@R=.8cm@C=1cm
{ A^{\u}(\cM)  \ar[r]^{\cong}  \ar[d] & A^{\u}(\cM) \ar[d] \\ \cN \ar[r]^-{\cong} & B^{\u}(\cM) }\]
commutes, where  $A^{\u}(\cM) \to \cN$ is the map into the direct limit, $A^{\u}(\cM) \cong A^{\u}(\cM)$ is obtained by applying $A^{\u}$ to the identification of $\cM$ with the $F$-module generated by its structure morphism, and the right-most vertical map is the one given by the natural transformation from $A^{\u}$ to $B^{\u}$.
\end{proposition}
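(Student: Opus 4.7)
The plan is to induct on $d \geq 1$. The base case $d = 1$ is essentially Proposition \ref{generating: P} applied with $\f = \f_1$, $M = \cM$ viewed as an $R$-module, and $\alpha$ equal to the structure isomorphism $\cM \to F_R(\cM)$. Because the $F_R$-module generated by an isomorphism $M \to F_R(M)$ is canonically identified with $M$ itself, Proposition \ref{generating: P} delivers an isomorphism $\cN \cong H^{u_1}_{\ideala_1}(\cM) = B^{(u_1)}(\cM)$ together with the required commutative square.

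For the inductive step, assume the statement holds in length $d$ and fix $\u = (u_1, \u')$ with $\u' = (u_2, \ldots, u_{d+1}) \in \ZZ^d$, so that by the definitions in Setup \ref{iteratedLC: s} we have $A^{\u}(\cM) = H^{u_1}(\f_1; A^{\u'}(\cM))$ and $B^{\u}(\cM) = H^{u_1}_{\ideala_1}(B^{\u'}(\cM))$. The inductive hypothesis, applied to $\u'$, provides the $R$-linear map $\beta' : A^{\u'}(\cM) \to F_R(A^{\u'}(\cM))$ obtained by iterating \eqref{koszulTriangle: e} exactly $d$ times, and asserts that the $F_R$-module it generates is identified with $B^{\u'}(\cM)$ in a way compatible with the natural transformation $A^{\u'} \to B^{\u'}$. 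Now apply Proposition \ref{generating: P} with $\f = \f_1$, $k = u_1$, $M = A^{\u'}(\cM)$, and $\alpha = \beta'$: the induced map on $H^{u_1}(\f_1; A^{\u'}(\cM)) = A^{\u}(\cM)$ generates an $F_R$-module that is identified with $H^{u_1}_{\ideala_1}$ of the $F_R$-module generated by $\beta'$, i.e., with $H^{u_1}_{\ideala_1}(B^{\u'}(\cM)) = B^{\u}(\cM)$, as required.

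The commutative square in the statement is then obtained by vertically stacking the square furnished by this final application of Proposition \ref{generating: P} on top of $H^{u_1}_{\ideala_1}$ applied to the inductively supplied square, with the naturality of the comparison $A^{\bullet} \to B^{\bullet}$ gluing the two together. The main obstacle is purely notational: one must verify that the map $\beta$ appearing in the statement truly agrees with the one constructed by this iterative procedure, but this follows from the observation that \eqref{koszulTriangle: e} is obtained by tensoring \eqref{mapOnKoszul: e} with the structure morphism of an $F_R$-module, which is exactly the operation performed inside Proposition \ref{generating: P}; iterating this produces the same map as iterating \eqref{koszulTriangle: e} directly.
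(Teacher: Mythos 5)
Your proof is correct and follows the same route the paper takes: the paper's entire proof is ``This follows from a straightforward induction on $d \geq 1$ in which one repeatedly invokes Proposition~\ref{generating: P}, with $\alpha$ being the structure morphism $\cM \cong F(\cM)$,'' and you have simply unpacked that induction. One small notational caveat: with the indexing fixed in Setup~\ref{iteratedLC: s}, the functor $A^{\u'}$ for $\u' = (u_2, \ldots, u_{d+1}) \in \ZZ^d$ is built from $\f_1, \ldots, \f_d$, \emph{not} from $\f_2, \ldots, \f_{d+1}$, so the claimed equality $A^{\u}(\cM) = H^{u_1}(\f_1; A^{\u'}(\cM))$ does not literally follow ``by the definitions'' of the setup. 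This is easily repaired --- either strengthen the inductive hypothesis so that it quantifies over an arbitrary list of $d$ finite sequences (which is harmless, since the setup fixes $\f_t$ for every $t$ but the proposition never uses any relation between them), or re-index --- and it does not affect the substance of the argument, which is the correct one.
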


\begin{proof}  This follows from a straightforward induction on $d \geq 1$ in which one repeatedly invokes Proposition \ref{generating: P}, with $\alpha$ being the structure morphism $\cM \cong F(\cM)$.
\end{proof}  
  
\subsection{$F$-modules and $D$-modules}  \label{FmodDmod: SS}  Let $T$ be a a ring, and let $D(T)$ denote the ring of $\ZZ$-linear differential operators on $T$.  Throughout this discussion, the term ``$D$-module''  always refer to a left $D(T)$-module.  Note that a $D$-module is always $T$-module via restriction of scalars.  

The maps in the \v{C}ech complex of a sequence in $T$ on a $D$-module $M$ are $D$-linear, and so the local cohomology of $M$ inherits a natural $D$-module structure from $M$ \cite[Example 2.1(iv)]{LyuDMod}.  This also shows that given a map of $D$-modules, the associated (functorial) maps on local cohomology are again $D$-linear.  It follows that the iterated local cohomology modules of a $D$-module are themselves $D$-modules, and that the functorial maps determined by iterated local cohomology are $D$-linear.  There is another canonical $D$-module structure on iterated local cohomology in positive characteristic.  Indeed, if $T$ has characteristic $p>0$ with Frobenius functor $F$, then 
the structure morphism $M \cong F(M)$ of an $F$-module $M$ allows one to define a natural $D(T)$-module structure on $M$ \cite[Section 5]{LyuFmod}.  Fortunately, these two $D$-module structures on (iterated) local cohomology (one coming from the \v{C}ech complex, and the other induced by the $F$-module structure) are isomorphic (see, e.g., \cite[Example 5.1(b) and 5.2(c)]{LyuFmod}).  When referring to (iterated) local cohomology in positive characteristic as a $D$-module, we  always mean either of these two isomorphic $D$-module structures.

The following result gives an important criterion for when a subset of given $F$-module generates it as $D$-module, and plays a crucial role in our proof of Proposition \ref{surjectionImpliesSurjection: P}.

\begin{lemma}[\textup{\cite[Corollary 4.4]{AMBL}}]
\label{AMBL: L}
Suppose that $T$ is a regular finitely generated algebra over an $F$-finite regular local ring of characteristic $p>0$.  If $M$ is a finitely generated $T$-module, and a $T$-linear map $M \to F(M)$ generates the $F$-module $\cM$, then the image of $M$ in the direct limit $\cM$ generates $\cM$ as a $D(T)$-module.
\end{lemma}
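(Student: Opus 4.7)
The plan is to establish $D(T) \cdot N = \cM$, where $N \subseteq \cM$ is the image of $M$ in the direct limit $\cM = \varinjlim_e F^e(M)$. Because the iterated structure isomorphism $\cM \cong F^e(\cM)$ carries the image of $F^e(M) \to \cM$ onto the subsheaf $F^e(N) \subseteq F^e(\cM) \cong \cM$, one has $\cM = \bigcup_{e \geq 0} F^e(N)$ as $T$-submodules of $\cM$. Thus the problem reduces to showing $F^e(N) \subseteq D(T) \cdot N$ for every $e \geq 0$, since then $\cM = \bigcup_{e} F^e(N) \subseteq D(T) \cdot N \subseteq \cM$.

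To secure the containment $F^e(N) \subseteq D(T) \cdot N$, I would invoke the characterization of differential operators in positive characteristic due to Yekutieli: for a regular ring $T$ that is $F$-finite (and here $T$ is $F$-finite because it is finitely generated over an $F$-finite base), the subring $D^{(e)}(T)$ of level-$\leq e$ operators coincides with $\Hom_{T^{p^e}}(T,T)$, where $T$ is viewed as a $T^{p^e}$-module via the $e$-fold Frobenius. By Kunz's theorem, $T$ is locally free, hence projective, over $T^{p^e}$, so a local $T^{p^e}$-basis $\{t_i\}$ of $T$ is available and gives rise to plenty of $T^{p^e}$-linear operators, in particular the left-multiplication maps $m_{t} \colon T \to T$ for every $t \in T$. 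Unwinding the $D(T)$-action on $\cM$ from its $F$-module structure, as in \cite[Section 5]{LyuFmod}, one finds that an operator $\delta \in D^{(e)}(T)$ acts on $\cM \cong F^e(\cM) = T \otimes_{T^{p^e}} \cM$ by $\delta \cdot (t \otimes x) = \delta(t) \otimes x$. An arbitrary element of $F^e(N) = T \otimes_{T^{p^e}} N \subseteq T \otimes_{T^{p^e}} \cM$ has the form $\sum_i t_i \otimes n_i$ with $n_i \in N$; each summand is then exhibited as $m_{t_i} \cdot n_i$, giving the desired inclusion.

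The main obstacle will be the compatibility check underlying Step 2: namely, that the $D(T)$-action on $\cM$ induced by the $F$-module structure of \cite[Section 5]{LyuFmod} really does agree, after transport through the iso $\cM \cong F^e(\cM)$, with the ``naive'' action of $\Hom_{T^{p^e}}(T,T)$ on the first tensor factor of $T \otimes_{T^{p^e}} \cM$. A subsidiary technical point is that a $T^{p^e}$-basis of $T$ exists only locally on $\Spec T$, so the verification of $F^e(N) \subseteq D(T) \cdot N$ may have to be carried out either locally and then sheafified, or via explicit dual-basis constructions in the spirit of \cite{AMBL}. Once these two matters are in hand, the filtration $\cM = \bigcup_e F^e(N)$ immediately yields the lemma.
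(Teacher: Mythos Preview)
The paper does not prove this lemma; it is quoted from \cite[Corollary~4.4]{AMBL} and used as a black box.

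Your filtration step is correct and matches the standard approach: writing $N_e$ for the image of $F^e(M)$ in $\cM$, one has $\cM = \bigcup_{e} N_e$, and the iterated structure isomorphism $\theta^e \colon \cM \to F^e(\cM)$ carries $N_e$ onto $F^e(N)$.

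The gap lies in your second step. The operator $m_t \in D^{(e)}(T)$ is left multiplication by $t$, hence $T$-linear; under the $D$-action of \cite[\S5]{LyuFmod} one finds
\[
m_t \cdot n \;=\; (\theta^e)^{-1}\bigl(t\cdot\theta^e(n)\bigr) \;=\; t\,n \in N,
\]
which is \emph{not} the element of $\cM$ corresponding to $t \otimes n \in F^e(\cM)$. The slip is in identifying $n \in N \subseteq \cM$ with $1 \otimes n \in F^e(\cM)$: the map $m \mapsto 1 \otimes m$ is only $T^{p^e}$-linear, so it cannot coincide with the $T$-linear isomorphism $\theta^e$, and in general $\theta^e(n) \neq 1 \otimes n$. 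This is more than the ``compatibility check'' you flag: even granting that $D^{(e)}$ acts on the first tensor factor after transport by $\theta^e$, the operators $m_t$ alone recover only $T\cdot N = N$, never $N_e$ for $e>0$.

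The argument in \cite{AMBL} (resting on \cite[\S5]{LyuFmod}) instead uses the full Morita/Frobenius-descent picture coming from the finite projectivity of $T$ over $T^{p^e}$: every $D(T)$-submodule of an $F$-module is automatically an $F$-submodule. Granting this, $L := D(T)\cdot N$ is $F$-stable, so $N \subseteq L$ gives $F(N) \subseteq F(L) = \theta(L)$, hence $N_1 = \theta^{-1}(F(N)) \subseteq L$, and inductively $N_e \subseteq L$ for all $e$, whence $L = \cM$. Thus your ``subsidiary technical point'' about dual-basis constructions is not a side issue at all: the genuinely non-$T$-linear operators in $D^{(e)}(T)$ built from such a dual basis are exactly what carries $N$ onto $N_e$, and the multiplication operators $m_t$ are insufficient.
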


We now specialize to the context of Setup \ref{iteratedLC: s}. 

\begin{proposition}
\label{surjectionImpliesSurjection: P}
Fix  $\u \in \ZZ^d$ and a prime integer $p>0$.  If the map  $A^{\u}(S) \to A^{\u}(S/pS)$ induced by the map $S \to S/pS$ is surjective, then so is the induced map $B^{\u}(S) \to B^{\u}(S/pS)$.
\end{proposition}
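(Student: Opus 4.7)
The plan is to set $R = S/pS$ and show that the image $N := \IM\bigl(B^{\u}(S) \to B^{\u}(R)\bigr)$ exhausts $B^{\u}(R)$ by exhibiting a $D(R)$-generating subset of $B^{\u}(R)$ that lies inside $N$ and then verifying that $N$ is itself a $D(R)$-submodule of $B^{\u}(R)$.

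For the generating set, I would apply Proposition \ref{generatingII: P} with $\cM = R$ equipped with its canonical $F_R$-module structure. This realizes $B^{\u}(R)$ as the $F_R$-module generated by the iterated map $\beta \colon A^{\u}(R) \to F_R(A^{\u}(R))$, and identifies the composite $A^{\u}(R) \to B^{\u}(R)$ coming from the natural transformation $A^{\u} \Rightarrow B^{\u}$ with the canonical map of $A^{\u}(R)$ into the direct limit. Since $R$ is Noetherian and iterated Koszul cohomology of finite sequences preserves finite generation, $A^{\u}(R)$ is a finitely generated $R$-module, so Lemma \ref{AMBL: L} applies and shows that the image of $A^{\u}(R)$ inside $B^{\u}(R)$ generates $B^{\u}(R)$ as a $D(R)$-module. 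Naturality of $A^{\u} \Rightarrow B^{\u}$, together with the hypothesized surjectivity of $A^{\u}(S) \to A^{\u}(R)$, then places this $D(R)$-generating set inside $N$.

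The main remaining task is to show that $N$ is a $D(R)$-submodule of $B^{\u}(R)$. The ring $D(S)$ of $\ZZ$-linear differential operators on the polynomial ring $S$ is generated by $S$ together with divided-power operators $\partial^{[\alpha]}$, and each $\partial^{[\alpha]}$ annihilates every rational prime; consequently every element of $D(S)$ preserves the ideal $pS$. Hence $R$ inherits a natural $D(S)$-module structure, the projection $S \onto R$ is $D(S)$-linear, and because the \v{C}ech differentials are $D(S)$-linear, the induced map $B^{\u}(S) \to B^{\u}(R)$ obtained by iterating local cohomology is a morphism of $D(S)$-modules. Because $p$ annihilates $B^{\u}(R)$, this $D(S)$-action factors through the canonical surjection $D(S) \onto D(R)$ and coincides with the intrinsic $D(R)$-structure on $B^{\u}(R)$ recalled in Subsection \ref{FmodDmod: SS}. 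Thus $N$, as the image of a $D(S)$-linear map, is a $D(S)$-submodule of $B^{\u}(R)$ and therefore a $D(R)$-submodule.

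The main obstacle I anticipate lies precisely in this last step: reconciling the three $D$-module structures that enter the argument---the $D(S)$-structure on $B^{\u}(S)$, the $F_R$-induced $D(R)$-structure on $B^{\u}(R)$ supplied by Proposition \ref{generatingII: P} and used in Lemma \ref{AMBL: L}, and the $D(S)$-structure on $B^{\u}(R)$ coming from $R$ as a $D(S)$-module via $D(S) \onto D(R)$---and verifying that these agree, so that a $D(S)$-stable subset of $B^{\u}(R)$ is automatically $D(R)$-stable. Once this compatibility is in hand, combining it with the first two steps gives $N = B^{\u}(R)$, which is the desired surjectivity.
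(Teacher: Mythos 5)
Your proof is correct and follows essentially the same route as the paper's: invoke Proposition \ref{generatingII: P} together with Lemma \ref{AMBL: L} to obtain a $D(R)$-generating set for $B^{\u}(R)$ lying in the image of $B^{\u}(S)$, then use the $D(S)$-linearity of $B^{\u}(S)\to B^{\u}(R)$ (and the isomorphism $D(S)/pD(S)\cong D(R)$) to see that this image is a $D(R)$-submodule, forcing it to be all of $B^{\u}(R)$. The compatibility issue you flag at the end --- that the \v{C}ech-complex $D(R)$-structure and the $F_R$-module-induced $D(R)$-structure on iterated local cohomology must agree --- is real, and is precisely what the paper settles in advance in Subsection \ref{FmodDmod: SS} by citing \cite[Example 5.1(b) and 5.2(c)]{LyuFmod}; granting that fact, your argument is complete.
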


\begin{proof}  Set $R = S/pS$. Let $\alpha : R \cong F(R)$ be defined via $x \mapsto x \otimes 1$, and let $\beta: A^{\u}(R) \to F(A^{\u}(R))$ be the map given by iterating the one induced by \ref{koszulTriangle: e}.  If $\cN$ is the $F$-module generated by $\beta$, then by Proposition \ref{generatingII: P}, there exists an isomorphism of $F$-modules $\cN \cong B^{\u}(R)$ such that
\[ \xymatrix@R=.8cm@C=1cm
{ A^{\u}(R)  \ar[r]^{\cong}  \ar[d] & A^{\u}(R) \ar[d]^{\phi} \\ \cN \ar[r]^-{\cong} & B^{\u}(R) }\]
commutes, where $\phi$ denotes the map induced by the natural transformation from $A^{\u}$ to $B^{\u}$.  By Lemma \ref{AMBL: L}, the image of $A^{\u}(R)$ in $\cN$ generates $\cN$ as an $D(R)$-module.  However, as the lower row is an isomorphism of $F$-modules, our earlier discussion implies that it is also an isomorphism of $D(R)$-modules, and consequently, the image of $A^{\u}(R)$ under the composition 
\[ A^{\u}(R) \to \cN \cong B^{\u}(R), \] which equals $\IM(\phi)$ by the commutativity of the diagram, generates $B^{\u}(R)$ as a $D(R)$-module.

Next, consider the following commutative diagram, which is obtained by applying the natural transformation $A^{\u} \to B^{\u}$ to the canonical surjection $S \to R$.
\begin{equation}
\label{NaturalCommutativeDiagram: e}
\xymatrix@R=.8cm@C=1cm{ A^{\u}(S)  \ar[r]  \ar[d] & A^{\u}(R) \ar[d]^{\phi} \\ B^{\u}(S) \ar[r]^{\psi} & B^{\u}(R)} 
\end{equation}
 
By our earlier discussion, the iterated local cohomology modules $B^{\u}(S)$ and $B^{\u}(R)$ are $D(S)$-modules, and the map $\psi$ is $D(S)$-linear.  It follows that $\IM(\psi)$ is a $D(S)$-module that is killed by $p$, and is therefore a $D(S) / p D(S) \cong D(R)$-submodule of $B^{\u}(R)$ (the preceding isomorphism follows from the discussion in \cite[Subsection 2.1]{Todos}).

We now combine these observations to complete the proof: By hypothesis, the top row in \eqref{NaturalCommutativeDiagram: e} is surjective, and therefore, $\IM(\phi) \subseteq \IM(\psi)$. However, as noted above, $\IM(\phi)$ generates $B^{\u}(R)$ over $D(R)$, and $\IM(\psi)$ is a $D(R)$-submodule of $B^{\u}(A)$, so that  
\[ B^{\u}(R) = D(R) \cdot \IM(\phi) \subseteq D(R) \cdot \IM(\psi) = \IM(\psi),\] which allows us to conclude that $\psi$ is surjective.
\end{proof}

\begin{theorem}  
\label{nzd: T}
Fix a positive integer $d$.  If multiplication by a prime integer $p>0$ is injective on $A^{\u}(S)$ for every $1 \leq c \leq d$ and $\u \in \ZZ^c$, then it is also injective on $B^{\u}(S)$ for every $1 \leq c \leq d$ and $\u \in \ZZ^c$.
\end{theorem}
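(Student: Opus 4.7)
The proof should follow by stringing together the three preparatory results: Lemma \ref{InjectiveSurjective: L} (in both directions of its ``either/or'' hypothesis) and Proposition \ref{surjectionImpliesSurjection: P}. The chain passes from injectivity on $A^{\u}$, to surjectivity onto reductions for $A^{\u}$, to surjectivity onto reductions for $B^{\u}$, and finally back to injectivity on $B^{\u}$.

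More precisely, the plan is as follows. First, apply Lemma \ref{InjectiveSurjective: L} to the functor $G^{\u} = A^{\u}$ under its hypothesis (1): the injectivity of multiplication by $p$ on $A^{\u}(S)$ for every $1 \leq c \leq d$ and $\u \in \ZZ^c$ is exactly the assumption of the theorem. The conclusion is a short exact sequence
\[
0 \to A^{\u}(S) \overset{p}{\longrightarrow} A^{\u}(S) \to A^{\u}(S/pS) \to 0
\]
for every such $c$ and $\u$. In particular, the induced map $A^{\u}(S) \to A^{\u}(S/pS)$ is surjective.

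Next, feed this surjectivity into Proposition \ref{surjectionImpliesSurjection: P}, applied to each $c$ and $\u$ individually, to conclude that the induced map $B^{\u}(S) \to B^{\u}(S/pS)$ is also surjective for every $1 \leq c \leq d$ and $\u \in \ZZ^c$. Finally, apply Lemma \ref{InjectiveSurjective: L} a second time, now to the functor $G^{\u} = B^{\u}$ under its hypothesis (2), which is precisely the surjectivity we have just established. The conclusion is a short exact sequence
\[
0 \to B^{\u}(S) \overset{p}{\longrightarrow} B^{\u}(S) \to B^{\u}(S/pS) \to 0,
\]
whose injectivity on the left is the desired statement.

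Since every step is a direct invocation of a previously proved result, there is no real obstacle to overcome in this proof itself; all of the technical content is packaged into Lemma \ref{InjectiveSurjective: L} (the splitting of long exact sequences coming from $0 \to S \overset{p}{\to} S \to S/pS \to 0$ under either hypothesis) and Proposition \ref{surjectionImpliesSurjection: P} (the $D$-module argument that propagates surjectivity from Koszul to local cohomology). The only thing to verify in writing is that the quantifier ``for every $1 \leq c \leq d$ and $\u \in \ZZ^c$'' is respected at each application, which it is, since both Lemma \ref{InjectiveSurjective: L} and Proposition \ref{surjectionImpliesSurjection: P} are stated with the same quantifier structure.
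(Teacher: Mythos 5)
Your proposal is correct and matches the paper's proof exactly: both chain Lemma \ref{InjectiveSurjective: L} under hypothesis (1) for $A^{\u}$, then Proposition \ref{surjectionImpliesSurjection: P}, then Lemma \ref{InjectiveSurjective: L} under hypothesis (2) for $B^{\u}$. Nothing to add.
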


\begin{proof}  Set $R=S/pS$.  According to Lemma \ref{InjectiveSurjective: L}, our hypothesis that multiplication by $p$ is injective implies that $A^{\u}(S) \to A^{\u}(R)$ is surjective for every $1 \leq c \leq d$ and $\u \in \ZZ^c$.  It follows from Proposition \ref{surjectionImpliesSurjection: P} that $B^{\u}(S) \to B^{\u}(R)$ is also surjective for every $1 \leq c \leq d$ and $\u \in \ZZ^c$, and applying Lemma \ref{InjectiveSurjective: L} once more shows that multiplication by $p$ on $B^{\u}(S)$ is injective for every $1 \leq c \leq d$ and $\u \in \ZZ^c$.
\end{proof}

\begin{corollary}
\label{finitelyManyBadPrimes: C}
Given a positive integer $d$, there are only finitely many prime integers $p>0$ for which multiplication by $p$ on is not injective on $B^{\u}(S)$ for some $\u \in \ZZ^d$.
\end{corollary}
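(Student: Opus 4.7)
The plan is to reduce to Theorem \ref{nzd: T} and then argue that iterated Koszul cohomology is finitely generated with finitely many associated primes, which rules out all but finitely many prime integers as potential zero divisors.

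By Theorem \ref{nzd: T}, it suffices to show that there are only finitely many prime integers $p>0$ for which multiplication by $p$ fails to be injective on $A^{\u}(S)$ for some $1 \leq c \leq d$ and $\u \in \ZZ^c$. First I would observe that $H^k(\f; M) = 0$ whenever $k<0$ or $k$ exceeds the length of the sequence $\f$, so for each fixed $c \in \set{1,\ldots,d}$ only finitely many tuples $\u \in \ZZ^c$ yield a nonzero $A^{\u}(S)$. Moreover, since Koszul cohomology of a finitely generated module over the Noetherian ring $S$ is itself finitely generated (being a subquotient of a finite free module), iterating shows that each $A^{\u}(S)$ is a finitely generated $S$-module.

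Next I would use the standard fact that multiplication by an element $p \in S$ fails to be injective on a module $M$ precisely when $p$ lies in the union of the associated primes of $M$. Applying this to each of the finitely many nonzero $A^{\u}(S)$ with $1 \leq c \leq d$ produces a finite set $\Sigma \subseteq \Spec S$ of associated primes, and the failure of injectivity of multiplication by $p$ on some $A^{\u}(S)$ is equivalent to $p \in \bigcup_{\p \in \Sigma} \p$. Finally, because $S = \ZZ[x_1, \ldots, x_n]$, each prime $\p \in \Sigma$ contracts to either $(0)$ or $(q)$ in $\ZZ$ for some prime integer $q$ (since $S/\p$ is a domain), and hence contains at most one prime integer. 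Thus $\bigcup_{\p \in \Sigma} \p$ contains only finitely many prime integers, and invoking Theorem \ref{nzd: T} completes the proof.

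I do not anticipate any serious obstacle here, as all the substantive work is done in Theorem \ref{nzd: T}; the only thing to verify is the finiteness of the set of nonzero $A^{\u}(S)$ and of their associated primes, both of which are routine consequences of the Noetherian hypothesis and the finite length of Koszul complexes.
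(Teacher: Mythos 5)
Your proposal is correct and follows essentially the same route as the paper: reduce via Theorem \ref{nzd: T} to the iterated Koszul cohomology modules $A^{\u}(S)$, observe there are only finitely many nonzero ones (each finitely generated over $S$, hence with finitely many associated primes), and note each such prime contains at most one positive prime integer. You simply spell out in more detail why there are finitely many nonzero modules; the substance is identical.
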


\begin{proof}
If $p$ is a zero divisor on $B^{\u}(S)$ for some $\u \in \ZZ^d$, then it then follows from Theorem \ref{nzd: T} that $p$ must be a zero divisor $A^{\w}(S)$ for some $1 \leq c \leq d$ and $\w \in \ZZ^c$, and therefore that $p$ is contained in an associated prime of such a module.  The fact that there are only finitely many such $p>0$ then follows from the fact that there are only finitely many non-zero iterated Koszul cohomology modules of this form, and that each associated prime of each such module (of which there are only finitely many, since these modules are finitely generated over $S$) can contain at most one positive prime integer. 
\end{proof}

\subsection{The finiteness of associated primes of iterated local cohomology} \label{finitenessAss: ss}
We are now ready to prove Theorem \ref{Thm Ass} from the introduction.  In the context of Setup \ref{iteratedLC: s}, this theorem states that for every positive integer $d$ and $\u \in \ZZ^d$, the module $B^{\u}(S)$ has finitely many associated primes.

\begin{proof}[Proof of Theorem \ref{Thm Ass}]  Fix a positive integer $d$ and an element $\u \in \ZZ^d$, and set $B = B^{\u}(S)$.  The unique map $\Spec(S) \to \Spec(\ZZ)$ induces a map $\pi: \Ass_S(B) \to \Spec(\ZZ)$.  To show that $\Ass_S(B)$ is finite, it suffices to show that the image of $\pi$ is finite, and that the fiber over any point in the image of $\pi$ is also finite.

To show that the image of $\pi$ is finite, it suffices to show that it contains only finitely many positive prime integers.  However, if a prime integer $p>0$ is contained in the image of $\pi$, then $p$ is contained in some associated prime of $B$, and is therefore a zero divisor on $B$.  Corollary \ref{finitelyManyBadPrimes: C} then shows that there are only finitely many such prime integers.  

We now consider the fibers of $\pi$.  The associated primes in $\pi^{-1}(0)$ are in one-to-one correspondence with the associated primes of the localization $\QQ \otimes_{\ZZ} B = B^{\u}(\QQ \otimes_{\ZZ} S)$, and there are only finitely many such primes by \cite[Remark 3.7(i)]{LyuDMod}.  On the other hand, the primes in $\pi^{-1}(p)$ for some prime integer $p>0$ are precisely the associated primes of $B$ containing $p$, and there are only finitely many such primes by \cite[Theorem 1.2]{NunezPR}. 
\end{proof}

\section{On the agreement of (standard) and mixed characteristic Lyubeznik numbers} \label{Ln&id}


In this section, we focus on Question \ref{Q InjDim} from the introduction, which is concerned with the equality of the (standard) and mixed characteristic Lyubeznik numbers in a natural context in which they are both defined (namely, for local rings of characteristic $p>0$ obtained from some fixed finitely generated $\ZZ$-algebra).  The first subsection is dedicated to establishing some important results (the results established in this subsection are utilized in both the current and the next section).  In the second subsection, we prove Theorem \ref{Thm Lyubeznik Numbers} (in fact, we prove a more precise statement in Theorem \ref{Thm Lyubeznik Numbers Refined}).  We refer the reader to the introduction for the definition of the standard and mixed characteristic Lyubeznik numbers.

\subsection{Preliminary lemmas}

Throughout this subsection, we suppose that $T$ is an unramified regular local ring of mixed characteristic $p>0$, with residue field $\kk$.   In what follows, we  repeatedly use (without further reference) the fact  that local cohomology modules of $T$ and $T/pT$ with support in arbitrary ideals have finite Bass numbers \cite{LyuUMC, Huneke}.


\begin{lemma}
\label{Comp Ext}
If $M$ is a $T$-module such that $pM = 0$, then 
\[ \Ext^i_T(\kk, M) \cong \Ext^i_{T/pT}(\kk, M) \oplus \Ext^{i-1}_{T/pT} (\kk, M) \] as modules over $T/pT$ for every integer $i$.
\end{lemma}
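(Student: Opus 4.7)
The plan is to compute both sides by resolving $\kk$ with a Koszul complex and then exploit the fact that $p$ acts as zero on $M$ to produce a K\"unneth-style splitting. Since $T$ is unramified of mixed characteristic $p>0$, the element $p$ lies outside $\m^2$ and so extends to a regular system of parameters $p, y_1, \ldots, y_{n-1}$ for $T$, where $n = \dim T$; the images of $y_1, \ldots, y_{n-1}$ form a regular system of parameters for $T/pT$. Accordingly,
$$\Ext^i_T(\kk, M) \cong H^i(K^\bullet(p, y_1, \ldots, y_{n-1}; M)) \quad \text{and} \quad \Ext^i_{T/pT}(\kk, M) \cong H^i(K^\bullet(y_1, \ldots, y_{n-1}; M)),$$
where the latter complex may be read over either $T$ or $T/pT$ since $pM = 0$.

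Next I would invoke the K\"unneth-type factorization
$$K^\bullet(p, y_1, \ldots, y_{n-1}; M) \;\cong\; K^\bullet(p; T) \otimes_T K^\bullet(y_1, \ldots, y_{n-1}; M).$$
The left factor is the complex $T \xrightarrow{p} T$ concentrated in degrees $0$ and $1$, while every term of the right factor is a direct sum of copies of $M$ and is therefore annihilated by $p$. Hence the \emph{horizontal} differential in the associated total complex is identically zero, and the total complex decomposes, as a complex of $T/pT$-modules, into the direct sum of $K^\bullet(y_1, \ldots, y_{n-1}; M)$ with a shifted copy of itself. Taking cohomology yields
$$\Ext^i_T(\kk, M) \;\cong\; H^i(K^\bullet(\mathbf y; M)) \oplus H^{i-1}(K^\bullet(\mathbf y; M)) \;\cong\; \Ext^i_{T/pT}(\kk, M) \oplus \Ext^{i-1}_{T/pT}(\kk, M),$$
with all isomorphisms preserving the $T/pT$-module structure inherited from $M$.

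The main obstacle is essentially bookkeeping: one must verify that the total differential $(a, b) \mapsto (d_{\mathbf y}a,\; p\cdot a - d_{\mathbf y}b)$ (up to sign) degenerates to $(d_{\mathbf y}a,\; -d_{\mathbf y}b)$ once $pM = 0$ is used, so that the splitting is genuinely a splitting of complexes of $T/pT$-modules and not merely of the $\kk$-vector spaces appearing in cohomology. A slicker but essentially equivalent alternative would be the change-of-rings spectral sequence
$$E_2^{s,t} = \Ext^s_{T/pT}(\kk,\, \Ext^t_T(T/pT, M)) \Rightarrow \Ext^{s+t}_T(\kk, M),$$
which concentrates in the two rows $t = 0, 1$ because $T/pT$ has projective dimension one over $T$ and both $\Ext^t_T(T/pT, M)$ compute to $M$; this route would, however, demand a separate verification that the $d_2$-differential vanishes, which the direct Koszul approach sidesteps.
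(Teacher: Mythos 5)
Your proposal is correct and takes essentially the same approach as the paper: both resolve $\kk$ via the Koszul complex on a regular system of parameters $p, y_1,\ldots,y_{n-1}$, factor it as a tensor product isolating the $p$-factor, and observe that $pM=0$ kills the $p$-component of the differential, so the complex splits into a copy of $K^\bullet(\mathbf y;M)$ plus a shift of it. (As a minor aside, your remark about the change-of-rings spectral sequence understates the extra work slightly: even with $d_2=0$ you would still need to split the resulting filtration, which is precisely the issue the direct Koszul computation avoids.)
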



\begin{proof}  Let $\boldsymbol{g}$ be a finite sequence in $T$ such that, together with $p$, forms a regular system of parameters for $T$.  By definition, the complex $\cK^{\bullet}(p, \boldsymbol{g}; M)$ equals 
\[ \cK^\bullet(\boldsymbol{g}; T) \otimes_T  \cK^\bullet(p; M)  =  \cK^\bullet(\boldsymbol{g}; T) \otimes_T \left( 0 \to M \overset{p}{\longrightarrow} M \to 0 \right),\]
and therefore, 
\[ \cK^i(p, \boldsymbol{g}; M) = \left( \cK^i(\boldsymbol{g}; T) \otimes_T M \right) \oplus \left( \cK^{i-1}(\boldsymbol{g}; T) \otimes_T M \right), \]
where the copy of $M$ in the left-hand summand is in degree zero, and that in the right-hand summand is in degree one.  By hypothesis, multiplication by $p$ on $M$ is the zero map, and given this, it follows that the differentials on $\cK^{\bullet}(p, \boldsymbol{g}; M)$ preserve each summand in this decomposition.  In fact, it is straightforward to verify that, up to a sign, the induced maps on each summand agree with the differentials on the complexes $\cK^i(\boldsymbol{g}; M)$ and $\cK^{i-1}(\boldsymbol{g}; M)$, respectively.  In other words, there is an isomorphism of complexes
\begin{equation} \label{E: Koszul}
\cK^i(p, \boldsymbol{g}; M) \cong  \cK^i(\boldsymbol{g}; M) \oplus \cK^{i-1}(\boldsymbol{g}; M).
\end{equation}

By our choice of $\boldsymbol{g}$, we may compute $\Ext^i_{T} (\kk,M)$ as the cohomology of $\cK^i(p, \boldsymbol{g}; M)$. Moreover, the image of $\boldsymbol{g}$ modulo $p$ forms a system of parameters for $T/pT$, and the Koszul complex of $\boldsymbol{g}$ modulo $p$ on $M$, considered as a module over $T/pT$, can be identified with the complex $\cK^\bullet(\boldsymbol{g}; M)$.  As before, this complex can be used to compute the modules $\Ext_{T/pT}^i(\kk, M)$, and the lemma then follows from \eqref{E: Koszul}. 
\end{proof}

\seg{Using Lemma \ref{Comp Ext}, we can relate the Bass numbers of certain local cohomology modules of $T$ with those of certain local cohomology modules of the reduction of $T$ modulo $p$.}


\begin{corollary}\label{Ext nzd}
If $\ideala$ is an ideal of $T$ such that multiplication by $p$ is injective on $H^j_{\ideala}(T)$ for every $j \geq 0$, then for every pair of nonnegative integers $i$ and $j$,
\[ \dim_{\kk} \Ext^i_T (\kk,H^j_\ideala(T)) =\dim_{\kk} \Ext^{i-1}_{T/pT} (\kk,H^j_\ideala(T/pT)).\]
\end{corollary}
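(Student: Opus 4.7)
My plan is to derive two relationships between the dimensions of Ext groups: one from a short exact sequence involving $H^j_\ideala(T)$, and one by applying Lemma \ref{Comp Ext} to $H^j_\ideala(T/pT)$. Writing $M = H^j_\ideala(T)$ and $N = H^j_\ideala(T/pT)$, I first apply $H^\bullet_\ideala(-)$ to the short exact sequence $0 \to T \xrightarrow{p} T \to T/pT \to 0$. The hypothesis that multiplication by $p$ is injective on $H^j_\ideala(T)$ for every $j$ means that the connecting maps $H^{j-1}_\ideala(T/pT) \to H^j_\ideala(T)$ in the associated long exact sequence are zero, so it breaks into the short exact sequences
$$0 \to M \xrightarrow{p} M \to N \to 0.$$

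Next, I apply $\Ext^\bullet_T(\kk,-)$ to this sequence. Because $p \in \m$ annihilates $\kk$, and because multiplication by $p$ on $\Ext^i_T(\kk, M)$ is induced equally well by the scalar action on either argument, all the multiplication-by-$p$ maps in the resulting long exact sequence vanish. Setting $a_i = \dim_\kk \Ext^i_T(\kk, M)$, the long exact sequence thus breaks into
$$0 \to \Ext^i_T(\kk, M) \to \Ext^i_T(\kk, N) \to \Ext^{i+1}_T(\kk, M) \to 0,$$
giving $\dim_\kk \Ext^i_T(\kk, N) = a_i + a_{i+1}$. On the other hand, since $p N = 0$, Lemma \ref{Comp Ext} yields $\dim_\kk \Ext^i_T(\kk, N) = b_i + b_{i-1}$, where $b_i = \dim_\kk \Ext^i_{T/pT}(\kk, N)$ (with the convention $b_{-1}=0$). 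All dimensions appearing here are finite by \cite{LyuUMC, Huneke}, so these identities are genuine equations of natural numbers.

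Equating the two expressions for $\dim_\kk \Ext^i_T(\kk, N)$ produces the recursion $a_{i+1} = b_i + b_{i-1} - a_i$ for $i \ge 0$, which I will use to prove $a_i = b_{i-1}$ by induction on $i$. The base case is $a_0 = 0 = b_{-1}$: since $p$ is injective on $M$ and $p \in \m$, any element of $\Hom_T(\kk, M)$ is killed by $p$ and is therefore zero. The inductive step is immediate: if $a_i = b_{i-1}$, then $a_{i+1} = b_i + b_{i-1} - b_{i-1} = b_i$. I expect the one point requiring some care to be the vanishing of the multiplication-by-$p$ maps on Ext -- this is what turns the long exact sequence into a collection of short exact sequences and is also what drives the base case of the induction, so both conclusions hinge on the mild but essential observation that $p$ lies in $\m$.
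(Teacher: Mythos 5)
Your proof is correct and takes essentially the same approach as the paper: both exploit the short exact sequence $0 \to H^j_\ideala(T) \xrightarrow{p} H^j_\ideala(T) \to H^j_\ideala(T/pT) \to 0$, break the associated long exact sequence in $\Ext^\bullet_T(\kk,-)$ into short exact sequences using the fact that $p$ annihilates $\kk$, invoke Lemma \ref{Comp Ext} to split $\Ext^i_T(\kk, H^j_\ideala(T/pT))$ as a sum over $T/pT$, and close the argument by induction on $i$. Your direct verification that $\Hom_T(\kk, H^j_\ideala(T)) = 0$ (from $p$ being injective on the target and zero on the source) is a slightly more self-contained base case than the paper's appeal to the $i=0$ instance of its short exact sequence, but the two are equivalent.
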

\begin{proof}
Our hypothesis on $p$ guarantees that the long exact sequence in local cohomology with respect to $\ideala$
associated to the short exact sequence 
$0\to T \overset{p}{\longrightarrow} T\to T/pT\to 0$ breaks into short exact sequences 
\[ 0\to H^j_\ideala(T)\overset{p}{\longrightarrow} H^j_\ideala(T)\to H^j_\ideala(T/pT)\to 0.\]
In turn, each such short exact sequence in local cohomology induces a long exact sequence
\[\xymatrix@C=0.4cm@R=0.6cm{
\cdots \ar[r] &\Ext^{i-1}_{T}(\kk, H^j_\ideala(T)) \ar[r]^p &\Ext^{i-1}_{T}(\kk,H^j_\ideala(T)) \ar[r] &\Ext^{i-1}_{T}(\kk,H^j_\ideala(T/pT))  \connarrow & \\
& \Ext^{i}_{T}(\kk,H^j_\ideala(T)) \ar[r]^p & \Ext^{i}_{T}(\kk,H^j_\ideala(T)) \ar[r] & \cdots, \ \ \ \ \  &
}\]
and as multiplication by $p$ is zero on each $\Ext^i_{T} (\kk,H^j_\ideala(T) )$, we obtain short exact sequences 
\begin{equation}
\label{interestingExtSequence: e}
0 \to  \Ext^{i-1}_{T}(\kk,H^j_\ideala(T)) \to \Ext^{i-1}_{T}(\kk,H^j_\ideala(T/pT)) \to \Ext^{i}_{T}(\kk,H^j_\ideala(T)) \to 0 
\end{equation}
for every pair of integers $i$ and $j$.

We are now ready prove the corollary by induction on $i \geq 0$.   If $i=0$, then it is automatic that $\Ext^{i-1}_{T/pT} (\kk,H^j_\ideala(T/pT))$ is zero, while the fact that $\Ext^i_T (\kk,H^j_\ideala(T)) $ is also zero follows from \eqref{interestingExtSequence: e}.  Next, suppose that 
\[ \dim_{\kk} \Ext^i_T (\kk,H^j_\ideala(T)) =\dim_{\kk} \Ext^{i-1}_{T/pT} (\kk,H^j_\ideala(T/pT)) \]
for some $i \geq 0$ and all $j \geq 0$.  According to \eqref{interestingExtSequence: e}, 
\[ \dim_{\kk} \Ext^{i+1}_T(\kk, H^j_\ideala(T)) = \dim_{\kk} \Ext^i_{T}(\kk,H^j_\ideala(T/pT)) - \dim_{\kk} \Ext^i_{T}(\kk,H^j_\ideala(T)), \] and by Lemma \ref{Comp Ext} (with $M=H^j_\ideala(T/pT)$), we also have that
\[ \dim_{\kk} \Ext^i_{T}(\kk,H^j_\ideala(T/pT)) =  \dim_{\kk} \Ext^i_{T/pT}(\kk, H^j_\ideala(T/pT) ) + \dim_{\kk} \Ext^{i-1}_{T/pT} (\kk, H^j_\ideala(T/pT) ). \] 
Substituting the second identity into the first shows that $\dim_{\kk} \Ext^{i+1}_T (\kk,H^j_\ideala(T))$ equals
\begin{align*}
\dim_{\kk} \Ext^i_{T/pT}(\kk, H^j_\ideala(T/pT) ) + \dim_{\kk} \Ext^{i-1}_{T/pT} (\kk, H^j_\ideala(T/pT) ) -  \dim_{\kk} \Ext^i_{T}(\kk,H^j_\ideala(T)),
\end{align*}
and our inductive hypothesis implies that the two right-most terms are equal, leaving 
\[ \dim_{\kk} \Ext^{i+1}_T (\kk,H^j_\ideala(T)) = \dim_{\kk} \Ext^i_{T/pT}(\kk, H^j_\ideala(T/pT) ), \]
which allows us to conclude the proof.
\end{proof}

\seg{We now shift our attention to the study of certain local cohomology modules with support in ideals containing a prime integer $p>0$.}


\begin{lemma} 
\label{isom+SES: L}  
If $\idealb$ is an ideal of $T$ containing $p$, then $H^{j-1}_{\idealb}(T_p/T) \cong H^{j}_{\idealb}(T)$ for every $j \geq 0$.  Furthermore, if $\ideala$ is an ideal of $T$ such that multiplication by $p$ is injective on $H^j_{\ideala}(T)$ for every $j \geq 0$, and $\idealb = \ideala + pT$, then for every $j \geq 0$, there is a short exact sequence 
\[
0\to H^j_{\ideala}(T/pT)\to H^j_{\idealb}(T_p/T) \overset{p}{\longrightarrow} H^j_{\idealb}(T_p/T)\to 0.
\]
\end{lemma}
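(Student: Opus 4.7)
My plan is to treat the two assertions separately via short exact sequences relating $T$, $T_p$, and $T_p/T$.

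\textit{First assertion.} I would apply local cohomology with support in $\idealb$ to the short exact sequence $0\to T\to T_p\to T_p/T\to 0$. The crucial observation is that since $p\in\idealb$ and $p$ becomes a unit in $T_p$, the extended ideal $\idealb T_p$ is all of $T_p$, so $H^j_{\idealb}(T_p)=0$ for every $j\geq 0$. The resulting long exact sequence then collapses to give the isomorphism $H^{j-1}_{\idealb}(T_p/T)\cong H^j_{\idealb}(T)$ for every $j\geq 0$, using also that $H^0_{\idealb}(T)=0$ since $T$ is a domain and $\idealb\neq 0$.

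\textit{Second assertion.} I would first establish the elementary short exact sequence
\[ 0\to T/pT \to T_p/T \overset{p}{\longrightarrow} T_p/T \to 0, \]
where the left map sends the class of $a$ to the class of $a/p$; exactness follows from a direct check using that $T$ is a domain. Applying local cohomology with support in $\idealb$ yields a long exact sequence, and I would identify $H^j_{\idealb}(T/pT)$ with $H^j_{\ideala}(T/pT)$, which is valid because $T/pT$ is annihilated by $p$, so adjoining $p$ to a generating sequence of $\ideala$ does not alter the \v{C}ech computation (the complex $\cK^{\bullet}(p;T/pT)$ is quasi-isomorphic to $T/pT$ concentrated in degree zero).

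\textit{The main obstacle} will be to show that this long exact sequence splits into the stated short exact sequences; equivalently, that multiplication by $p$ is surjective on $H^j_{\idealb}(T_p/T)$ for every $j\geq 0$. By the first assertion this is equivalent to surjectivity of $p$ on $H^{j+1}_{\idealb}(T)$, and to establish this I would invoke the long exact sequence from Subsection~\ref{Loc Coh},
\[ \cdots \to H^{k-1}_{\ideala}(T_p) \to H^k_{\idealb}(T) \to H^k_{\ideala}(T) \to H^k_{\ideala}(T_p) \to \cdots. \]
The hypothesis that $p$ is a non-zero-divisor on each $H^k_{\ideala}(T)$ forces the localization map $H^k_{\ideala}(T)\to H^k_{\ideala}(T_p)$ to be injective (its kernel being the $p$-torsion), so each map $H^k_{\idealb}(T)\to H^k_{\ideala}(T)$ is zero. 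Consequently $H^k_{\idealb}(T)$ is a quotient of $H^{k-1}_{\ideala}(T_p)$, which is a $T_p$-module and hence $p$-divisible; since quotients of $p$-divisible modules remain $p$-divisible, this yields the required surjectivity and completes the plan.
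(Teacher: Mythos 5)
Your proposal is correct and follows essentially the same route as the paper's proof: part one via the localization sequence $0 \to T \to T_p \to T_p/T \to 0$ together with $\idealb T_p = T_p$, and part two by using injectivity of $p$ on $H^j_{\ideala}(T)$ to make the localization maps injective, concluding that $H^{j+1}_{\idealb}(T)$ is a quotient of the $p$-divisible $T_p$-module $H^j_{\ideala}(T_p)$, transferring surjectivity of $p$ through the isomorphism from part one, and then applying the long exact sequence of $0 \to T/pT \to T_p/T \overset{p}{\to} T_p/T \to 0$. The only cosmetic difference is that you phrase the splitting via "the map $H^k_{\idealb}(T) \to H^k_{\ideala}(T)$ is zero" rather than writing out the resulting short exact sequence explicitly; the underlying argument is the same.
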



\begin{proof}
Since $\idealb T_p = T_p$, the long exact sequence in local cohomology induced by the short exact sequence $0\to T\to T_p\to T_p/T\to 0$ implies that $H^{j-1}_{\idealb}(T_p/T) \cong H^{j}_{\idealb}(T) \text{ for all integers $j \geq 0$},$
establishing the first claim. 

Next, let $\ideala$ be above, and set $\idealb = \ideala + pT$.  Since multiplication by $p$ is injective on $H^j_\ideala(T)$, the localization map $H^j_\ideala(T) \to H^j_\ideala(T_p)$ is injective, and therefore, the long exact sequence 
\[ \cdots \to H^j_\ideala(T) \to H^j_\ideala(T_p) \to H^{j+1}_{\idealb}(T)\to  \cdots \] breaks into short exact sequences
\begin{equation}
 \label{expressingAsQuotient: e}
 0 \to H^j_\ideala(T) \to H^j_\ideala(T_p) \to H^{j+1}_{\idealb}(T)\to 0.
 \end{equation}
 
As $p$ is a unit in $T_p$, multiplication by $p$ on $H^j_\ideala(T_p) \cong H^j_\ideala(T) \otimes_T T_p$ is surjective, and \eqref{expressingAsQuotient: e} then implies that multiplication by $p$ must also be surjective on the quotient  $H^{j+1}_{\idealb}(T)$.   It then follows from the first part of the lemma that multiplication by $p$ is surjective on $H^j_{\idealb}(T_p/T)$, and consequently, the long exact sequence in local cohomology associated to the short exact sequence \[ 0\to T/pT\to T_p/T \overset{p}{\longrightarrow} T_p/T \to 0,\] in which the first homomorphism sends the class of $x$ to the class of $\frac{x}{p}$, induces short exact sequences
$0\to H^j_{\idealb}(T/pT)\to H^j_{\idealb}(T_p/T) \overset{p}{\longrightarrow} H^j_{\idealb}(T_p/T)\to 0$ for every $j \geq 0$.  Finally, because the expansions of $\ideala$ and $\idealb$ to $T/pT$ agree, we may replace $\idealb$ with $\ideala$ in the first module in this short exact sequence.
\end{proof}


\begin{corollary}
\label{Ext2}
If $\ideala$ is an ideal of $T$ such that multiplication by $p$ is injective on $H^j_{\ideala}(T)$ for every $j \geq 0$, and $\idealb = \ideala + pT$, then for every pair of nonnegative integers $i$ and $j$,
\[ \dim_{\kk} \Ext^i_{T}(\kk,H^{j}_{\idealb}(T)) =\dim_{\kk} \Ext^i_{T/pT}(\kk,H^{j-1}_{\ideala}(T/pT)).\] 
\end{corollary}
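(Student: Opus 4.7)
The plan is to chain together Lemma \ref{isom+SES: L} and Lemma \ref{Comp Ext}. First, using the isomorphism $H^{j-1}_{\idealb}(T_p/T) \cong H^{j}_{\idealb}(T)$ from Lemma \ref{isom+SES: L} (and reindexing the short exact sequence in that lemma by replacing $j$ with $j-1$), I obtain a short exact sequence
\[0 \to H^{j-1}_{\ideala}(T/pT) \to H^j_{\idealb}(T) \overset{p}{\longrightarrow} H^j_{\idealb}(T) \to 0.\]
The identification of the map $H^{j-1}_{\idealb}(T_p/T) \overset{p}{\longrightarrow} H^{j-1}_{\idealb}(T_p/T)$ with multiplication by $p$ on $H^j_{\idealb}(T)$ follows from the $T$-linearity of the isomorphism, which itself arises as a connecting map in a long exact sequence associated to a short exact sequence of $T$-modules.

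Next, I apply $\Hom_T(\kk, -)$ to the above short exact sequence. Since $\kk$ is annihilated by $p$, multiplication by $p$ on each $\Ext^i_T(\kk, H^j_{\idealb}(T))$ is the zero map, so the resulting long exact sequence in Ext breaks into short exact sequences
\[0 \to \Ext^{i-1}_T(\kk, H^j_{\idealb}(T)) \to \Ext^i_T(\kk, H^{j-1}_{\ideala}(T/pT)) \to \Ext^i_T(\kk, H^j_{\idealb}(T)) \to 0\]
for every $i \geq 0$ (with the left-most term zero when $i=0$). Taking $\kk$-dimensions yields
\[\dim_{\kk} \Ext^i_T(\kk, H^{j-1}_{\ideala}(T/pT)) = \dim_{\kk} \Ext^{i-1}_T(\kk, H^j_{\idealb}(T)) + \dim_{\kk} \Ext^i_T(\kk, H^j_{\idealb}(T)).\]

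Finally, since $p$ annihilates $H^{j-1}_{\ideala}(T/pT)$, Lemma \ref{Comp Ext} gives
\[\dim_{\kk} \Ext^i_T(\kk, H^{j-1}_{\ideala}(T/pT)) = \dim_{\kk} \Ext^i_{T/pT}(\kk, H^{j-1}_{\ideala}(T/pT)) + \dim_{\kk} \Ext^{i-1}_{T/pT}(\kk, H^{j-1}_{\ideala}(T/pT)).\]
Equating these two expressions for $\dim_{\kk} \Ext^i_T(\kk, H^{j-1}_{\ideala}(T/pT))$ and writing $a_i = \dim_{\kk} \Ext^i_T(\kk, H^j_{\idealb}(T))$ and $b_i = \dim_{\kk} \Ext^i_{T/pT}(\kk, H^{j-1}_{\ideala}(T/pT))$, I obtain the recurrence $a_i + a_{i-1} = b_i + b_{i-1}$ (with the convention $a_{-1} = b_{-1} = 0$). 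A straightforward induction on $i \geq 0$ then yields $a_i = b_i$, which is precisely the desired identity. No step in this plan looks especially obstructive; the most delicate point is verifying that the isomorphism furnished by Lemma \ref{isom+SES: L} is genuinely compatible with multiplication by $p$, but this is immediate from its construction via a long exact sequence of $T$-modules.
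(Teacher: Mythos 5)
Your proof is correct and takes essentially the same approach as the paper: you combine the short exact sequence from Lemma \ref{isom+SES: L}, the fact that $p$ kills every $\Ext^i_T(\kk, -)$, and the direct-sum decomposition of Lemma \ref{Comp Ext} to obtain the recurrence $a_i + a_{i-1} = b_i + b_{i-1}$, from which the result follows by induction. The only cosmetic difference is that you invoke the isomorphism $H^{j-1}_{\idealb}(T_p/T) \cong H^j_{\idealb}(T)$ at the outset, whereas the paper runs the argument in terms of $H^{j-1}_{\idealb}(T_p/T)$ and applies that isomorphism only at the final step.
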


\begin{proof}
The short exact sequence given in Lemma \ref{isom+SES: L} induces 
the long exact sequence 
\[\xymatrix@R=0.5cm@C=.3cm{
\cdots \ar[r] &\Ext^{i-1}_{T}(\kk, H^j_{\ideala}(T/pT)) \ar[r] &\Ext^{i-1}_{T}(\kk,H^j_{\idealb}(T_p/T)) \ar[r]^p &\Ext^{i-1}_{T}(\kk,H^j_{\idealb}(T_p/T))  \connarrow & \\
& \Ext^i_{T}(\kk,H^j_{\ideala}(T/pT)) \ar[r] &\Ext^i_{T}(\kk,H^j_{\idealb}(T_p/T)) \ar[r]^p &\Ext^i_{T}(\kk,H^j_{\idealb}(T_p/T)) \ar[r]  & \cdots. 
}\]
As multiplication by $p$ is zero on $\kk$,  it is also zero on every module in this sequence, which therefore breaks into short exact sequences 
\[ 0 \to \Ext_T^{i-1}(\kk, H^{j}_{\idealb}(T_p/T)) \to \Ext_T^i(\kk, H^j_{\ideala}(T/pT)) \to \Ext_T^{i}(\kk, H^{j}_{\idealb}(T_p/T)) \to 0. \] 

On the other hand, setting  $M= H^j_{\ideala}(T/pT)$ in Lemma \ref{Comp Ext} also shows that \[ \Ext^i_{T}(\kk,H^j_{\ideala}(T/pT)) \cong \Ext^i_{T/pT}(\kk,H^j_{\ideala}(T/pT)) \oplus \Ext^{i-1}_{T/pT}(\kk,H^j_{\ideala}(T/pT)).  \]

Comparing these two descriptions of $\Ext^i_{T}(\kk,H^j_{\idealb}(T/pT))$, and then inducing on $i \geq 0$, show that $\dim_{\kk} \Ext_{T/pT}^i ( \kk, H^{j-1}_{\ideala}(T/pT) ) = \dim_{\kk} \Ext_T^i(\kk, H^{j-1}_{\idealb}(T_p/T))$ for all $i, j \geq 0$, and the corollary then follows from the first isomorphism in Lemma \ref{isom+SES: L}.
\end{proof}

\subsection{On the agreement of Lyubeznik numbers}
Theorem \ref{Thm Lyubeznik Numbers} is a statement about the Lyubeznik numbers of rings obtained from a polynomial ring over $\ZZ$ by first killing a prime integer $p>0$, and then localizing at a prime ideal.  Below, we set the notation needed to precisely describe this process (and this notation  also appear in Theorem \ref{Thm Lyubeznik Numbers Refined}).

Let $S$ denote a polynomial ring over $\ZZ$, fix an ideal $I$ of $S$, and set $R=S/I$.  Furthermore, fix a prime integer $p>0$, and an ideal $Q$ of $S$ containing a prime integer $p>0$ and $I$.  The ideal $Q$ expands to a prime ideal under the canonical map from $S$ to each of the rings $S/pS, R$, and $R/pR = S/(I+pS)$; abusing notation, we also use $Q$ to denote the expansion of $Q \subseteq S$ to these rings.  We also set $T = S_{Q}$, so that $(T, QT, \kk)$ is a regular local ring of mixed characteristic $p$.   In fact, because 
$T/pT$ is the localization of a polynomial ring over $\mathbb{F}_p$ (and, in particular, a regular local ring), the prime $p$ must be part of a minimal generating set for the maximal ideal of $T$ (i.e., $T$ is unramified).  Finally, we set \[ A = (R/pR)_Q = S_Q / (IS_Q+pS_Q) = T / (IT + pT).\] 

To prove Theorem \ref{Thm Lyubeznik Numbers}, it suffices to show that there exists a finite set of prime integers $W$ for which $\lambda_{i,j}(A) = \newLyu_{i,j}(A)$ for $i,j \geq 0$ whenever $p \notin W$.  Below, we establish a slightly more refined statement.

\begin{theorem}[cf.\,Theorem \ref{Thm Lyubeznik Numbers}]
\label{Thm Lyubeznik Numbers Refined}
Under the above notation, let $W$ denote set of prime integers that are zero divisors on $H^j_I(S)$ for some $j \geq 0$ \textup{(}which is a finite set by Corollary \ref{finitelyManyBadPrimes: C}\textup{)}.  
If $p \notin W$ and $i,j \geq 0$, then 
\[ \lambda_{i,j}(A) = \newLyu_{i,j}(A) = \newLyu_{i+1,j+1}(R_Q). \] 
\end{theorem}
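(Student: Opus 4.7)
The plan is to interpret each of the three invariants as a dimension of an Ext group over either $T$ or $T/pT$, and then match them using the lemmas of the preceding subsection. Set $\ideala = IT$ and $\idealb = IT + pT$, so that $R_Q = T/\ideala$, $A = T/\idealb$, and $A = (T/pT)/I(T/pT)$. Since $T$ is unramified of mixed characteristic $p$, the prime $p$ is part of a regular system of parameters, so $\dim(T/pT) = \dim T - 1$. Unpacking the definitions of the three Lyubeznik numbers therefore gives
\[
\newLyu_{i,j}(A) = \dim_\kk \Ext^i_T(\kk, H^{\dim T - j}_\idealb(T)),
\]
\[
\lambda_{i,j}(A) = \dim_\kk \Ext^i_{T/pT}(\kk, H^{\dim(T/pT) - j}_{I(T/pT)}(T/pT)),
\]
\[
\newLyu_{i+1,j+1}(R_Q) = \dim_\kk \Ext^{i+1}_T(\kk, H^{\dim T - j - 1}_\ideala(T)).
\]
The hypothesis $p \notin W$ says that $p$ is a nonzero divisor on $H^k_I(S)$ for every $k$. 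Since $H^k_\ideala(T) = H^k_I(S)_Q$ and localization is flat, multiplication by $p$ remains injective on $H^k_\ideala(T)$ for every $k$, which is precisely the hypothesis needed to invoke Corollary~\ref{Ext2} and Lemma~\ref{isom+SES: L}.

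The first equality $\newLyu_{i,j}(A) = \lambda_{i,j}(A)$ follows immediately from Corollary~\ref{Ext2}. Applied to $\ideala = IT$ and $\idealb = IT + pT$ with $j' = \dim T - j$, it yields
\[
\dim_\kk \Ext^i_T(\kk, H^{j'}_\idealb(T)) = \dim_\kk \Ext^i_{T/pT}(\kk, H^{j'-1}_\ideala(T/pT)),
\]
and noting that $j' - 1 = \dim(T/pT) - j$, together with the fact that $\ideala$ expands to $I(T/pT)$ in $T/pT$, identifies the left and right sides with $\newLyu_{i,j}(A)$ and $\lambda_{i,j}(A)$ respectively.

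For the second equality $\newLyu_{i,j}(A) = \newLyu_{i+1,j+1}(R_Q)$, I would use the short exact sequence
\[
0 \to H^k_\ideala(T) \to H^k_\ideala(T_p) \to H^{k+1}_\idealb(T) \to 0
\]
established in the proof of Lemma~\ref{isom+SES: L}, which is available precisely because $p$ is a nonzero divisor on $H^k_\ideala(T)$. Applying $\Ext^{\bullet}_T(\kk, -)$ produces a long exact sequence. Now $H^k_\ideala(T_p)$ is a $T_p$-module, so $p$ acts invertibly on it; on the other hand, $p$ annihilates $\kk$, and hence also annihilates every module $\Ext^i_T(\kk, H^k_\ideala(T_p))$. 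The only way that the $T$-action of $p$ on these Ext modules can be both invertible and zero is if $\Ext^i_T(\kk, H^k_\ideala(T_p)) = 0$ for every $i$ and $k$. The long exact sequence therefore degenerates into natural isomorphisms
\[
\Ext^i_T(\kk, H^{k+1}_\idealb(T)) \cong \Ext^{i+1}_T(\kk, H^k_\ideala(T)),
\]
and substituting $k = \dim T - j - 1$ yields exactly $\newLyu_{i,j}(A) = \newLyu_{i+1,j+1}(R_Q)$.

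Both equalities reduce to essentially formal consequences of tools already developed in the subsection, so there is no substantive obstacle; the only care required is the index bookkeeping and in particular tracking the shift $\dim T = \dim(T/pT) + 1$, which is what produces the asymmetric $(i+1,j+1)$ versus $(i,j)$ indexing in the statement.
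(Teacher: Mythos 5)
Your proposal is correct, and your treatment of the first equality $\newLyu_{i,j}(A) = \lambda_{i,j}(A)$ via Corollary~\ref{Ext2} coincides with the paper's. For the remaining equality, however, you take a different path: the paper establishes $\newLyu_{i+1,j+1}(R_Q) = \lambda_{i,j}(A)$ by invoking Corollary~\ref{Ext nzd} (applied with $\ideala = IT$, together with the index shift $\dim(T/pT) = \dim T - 1$), and then concatenates with the first equality. You instead prove $\newLyu_{i,j}(A) = \newLyu_{i+1,j+1}(R_Q)$ directly by applying $\Ext^\bullet_T(\kk,-)$ to the short exact sequence
\[
0 \to H^k_{\ideala}(T) \to H^k_{\ideala}(T_p) \to H^{k+1}_{\idealb}(T) \to 0
\]
extracted from the proof of Lemma~\ref{isom+SES: L}, and observing that $\Ext^i_T(\kk, H^k_{\ideala}(T_p)) = 0$ because $p$ acts invertibly on the second argument but annihilates $\kk$. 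This is a clean and self-contained argument: it avoids the induction buried in Corollary~\ref{Ext nzd} and relates the two mixed characteristic Lyubeznik numbers without passing through $\lambda_{i,j}(A)$. On the other hand, Corollary~\ref{Ext nzd} is still needed elsewhere in the paper (e.g.\ in Theorem~\ref{positiveInjDim: T}), so there is no net reduction in machinery; the paper's organization simply factors the work through a lemma that has other uses. Both routes are correct, and your index bookkeeping, including the shift $\dim(T/pT) = \dim T - 1$ coming from $p$ being a regular parameter, is accurate.
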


\begin{proof}

By definition, we have an exact sequence 
\[ 0 \to IT \to T \to R_Q \to 0 \ \text{ and } \ 0 \to IT + pT \to T \to A \to 0 \] of $T$-modules, and an exact sequence of $T/pT$-modules
\[ 0 \to (IT+pT)/pT \to T/pT \to A \to 0. \]

Set $d = \dim(T)$.  As the left-most term in the short exact sequence of $T/pT$-modules agrees with the expansion of $IT$ to $T/pT$, these sequences imply that 
\begin{align*}
\newLyu_{i+1,j+1}(R_Q)  &= \dim_{\kk} \Ext^{i+1}_T(\kk, H_{IT}^{d - j-1}(T)), \\
\newLyu_{i,j}(A)  &= \dim_{\kk} \Ext^i_{T}(\kk,H^{d -j}_{IT+pT}(T)), \text{ and } \\
\lambda_{i,j}(A) & = \dim_{\kk} \Ext^i_{T/pT}(\kk, H_{IT}^{d-j-1}(T/pT)).
\end{align*}

As $p \notin W$, multiplication by $p$ is injective on $H^j_I(S)$ for every $j \geq 0$.  As $T$ is flat over $S$, it follows that multiplication by $p$ is also injective on $H^j_{IT}(T) = T \otimes_S H^j_I(S) $ for every $j \geq 0$.   In light of this, we may then combine Corollary \ref{Ext nzd} (with $\ideala = IT$) and our description of the Lyubeznik numbers above to conclude that $\newLyu_{i+1, j+1}(R_Q) = \lambda_{i,j}(A)$.  Similarly, we may apply Corollary \ref{Ext2} (with $\ideala = IT$) to concluce that $\newLyu_{i,j}(A)=\lambda_{i,j}(A)$.
\end{proof}

\section{Affirmative answers to Lyubeznik's question} \label{positiveInjDim: s}
In this section, we consider Lyubezink's question (i.e.,  Question \ref{Q InjDim} from the introduction), which asks the following:  Suppose $I$ is an ideal of a regular local ring $(S, \m)$ such that $H^j_I(S) \neq 0$.
\begin{itemize}
\item Is $\InjDim_S H^j_I(S)$ less than or equal to $\dim \Supp_S H^j_I(S)$?
\item Is the iterated local cohomology module $H^i_{\m} H^j_I(S)$ injective for every $i \geq 0$?
\end{itemize}

In the first subsection (namely, in Theorem \ref{positiveInjDim: T}), we give a positive answer to the first question in an interesting case, and in the third subsection (namely, in Proposition \ref{ILC: p}), we do the same for the second question.  In the second subsection, we discuss the condition that local cohomology over a finitely generated $\ZZ$-algebra descends from local cohomology over $\QQ$.


\subsection{Positive answers to the question on injective dimension}

\begin{remark}[On the support of certain local cohomology]  Suppose that $\ideala$ is an ideal of a ring $T$.  If $j \geq 0$ and $p>0$, then because multiplication by $p$ is zero on $H^j_{\ideala}(T/pT)$, 
\begin{equation}
\label{SameSuppOverQuotient: e}
 \Supp_T H^j_{\ideala}(T/pT)  = \Supp_T H^j_{\ideala}(T/pT) \cap \mathbb{V}(pT)  = \Supp_{T/pT}  H^j_{\ideala}(T/pT),
\end{equation}
where we have identified $\Spec (T/pT)$ with $\mathbb{V}(pT) \subseteq \Spec T$.     If, in addition, multiplication by $p$ is injective on every local cohomology module of $T$ with support in $\ideala$, then the exact sequence
\[ 0 \to H_{\ideala}^j(T) \overset{p}{\longrightarrow}  H_{\ideala}^j(T) \to H_{\ideala}^j(T/pT) \to 0 \] implies further that 
\[ \Supp_T H^j_{\ideala} (T/pT)  \subseteq ( \Supp_T H^j_{\ideala}(T) ) \cap \mathbb{V}(pT).\]  

Furthermore, because every prime in $(\Supp_T H^j_{\ideala}(T)) \cap \mathbb{V}(pT)$ must properly contain an associated prime of $H^J_{\ideala}(T)$ (the properness follows from the assumption that $p$ is a nonzerodivisor on $H^j_{\ideala}(T)$, and therefore cannot be contained in any associated prime), this shows that 
\begin{equation} 
\label{boundOnDimSupp: e}
\dim \Supp_T H^j_{\ideala} (T/pT) \leq \dim ( \Supp_T H^j_{\ideala}(T) \cap \mathbb{V}(pT) ) \leq \dim \Supp_T H^j_{\ideala}(T) - 1.
\end{equation} 
\end{remark}

\begin{lemma} \label{vanishingBass: L}
Let $(T, \m)$ be an unramified regular local ring of mixed characteristic $p > 0$. If $Q \subsetneq \m$ is a prime ideal of $T$, and $\ideala$ is an arbitrary ideal of $T$, then for $j\geq0$,
\[
\Ext^i_{T_Q} (T_Q/Q T_Q,H^j_{\ideala}(T_Q))=0 \ \text{ for } \ i>\dim\Supp_T H^j_{\ideala}(T).
\]
\end{lemma}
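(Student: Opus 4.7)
The plan is to reduce the desired vanishing to an injective-dimension bound on the $T_Q$-module $H^j_\ideala(T)_Q \cong H^j_{\ideala T_Q}(T_Q)$, and then to invoke the known bounds on the injective dimension of local cohomology over regular local rings in the two relevant characteristic regimes. As a first step I would dispose of the case $Q \notin \Supp_T H^j_\ideala(T)$, in which the localized module vanishes and the claim is trivial; so assume henceforth that $Q \in \Supp_T H^j_\ideala(T)$. The $\Ext$ groups in the statement then compute the Bass numbers of $H^j_{\ideala T_Q}(T_Q)$ at the maximal ideal $QT_Q$ of the regular local ring $T_Q$, so it will suffice to prove
\[
\InjDim_{T_Q} H^j_{\ideala T_Q}(T_Q) \leq \dim \Supp_T H^j_\ideala(T).
\]

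Next I would split on whether $p \in Q$. If $p \notin Q$, then $p$ is a unit in $T_Q$, so $T_Q$ is a regular local ring of equal characteristic zero, and Lyubeznik's bound \cite{LyuDMod} gives $\InjDim_{T_Q} H^j_{\ideala T_Q}(T_Q) \leq \dim \Supp_{T_Q} H^j_{\ideala T_Q}(T_Q)$, which is at most $\dim \Supp_T H^j_\ideala(T)$ because primes in $\Supp_{T_Q}$ correspond bijectively to primes of $\Supp_T H^j_\ideala(T)$ contained in $Q$. If instead $p \in Q$, then unramifiedness of $T$ passes to $T_Q$ (as $T/pT$ regular forces $T_Q/pT_Q = (T/pT)_Q$ regular), so $T_Q$ is unramified regular of mixed characteristic $p$, and Zhou's theorem \cite[Theorem 5.1]{Zhou} supplies only the weaker bound
\[
\InjDim_{T_Q} H^j_{\ideala T_Q}(T_Q) \leq \dim \Supp_{T_Q} H^j_{\ideala T_Q}(T_Q) + 1.
\]

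The main obstacle is absorbing this extra ``$+1$'' in the mixed characteristic case, and here the hypothesis $Q \subsetneq \m$ is essential. I would establish $\dim \Supp_T H^j_\ideala(T) \geq \dim \Supp_{T_Q} H^j_{\ideala T_Q}(T_Q) + 1$ by exploiting that the support of any $T$-module is closed under specialization. Namely, since $Q$ itself lies in the support, a longest chain of primes of $\Supp_T H^j_\ideala(T)$ contained in $Q$ may be taken to terminate at $Q$, producing a chain of length $\dim \Supp_{T_Q} H^j_{\ideala T_Q}(T_Q)$ ending at $Q$; because $Q \subsetneq \m$, some prime $P$ with $Q \subsetneq P \subseteq \m$ exists, and specialization-closure forces $P \in \Supp_T H^j_\ideala(T)$. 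Appending $P$ extends the chain by one, yielding the needed inequality and completing the proof.
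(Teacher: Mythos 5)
Your proof is correct and follows essentially the same route as the paper: reduce to bounding $\InjDim_{T_Q} H^j_{\ideala T_Q}(T_Q)$, establish $\dim\Supp_{T_Q} H^j_{\ideala T_Q}(T_Q) + 1 \le \dim\Supp_T H^j_{\ideala}(T)$ by extending a maximal chain in $Q$ toward $\m$, and apply Zhou's $+1$ bound. The only real difference is your explicit case split on whether $p\in Q$, which is arguably a touch more careful than the paper (which applies Zhou's theorem uniformly, even though when $p\notin Q$ the ring $T_Q$ has equal characteristic zero and one should really cite Lyubeznik's tighter bound there, as you do).
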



\begin{proof}
Any chain of prime ideals in $\Supp_T H^j_{\ideala}(T)$ contained in $Q$ can be extended by adding $\m$. In other words, $\dim\Supp_T H^j_{\ideala}(T_Q ) \leq \dim\Supp_T H^j_{\ideala}(T)-1$, so that by \cite[Theorem 5.1]{Zhou},
\[\InjDim H^j_{\ideala}(T_Q )\leq \dim\Supp_T H^j_{\ideala}(T_Q )+1\leq   \dim\Supp_T H^j_{\ideala}(T). \] 
Thus, all Bass numbers of $H^j_{\ideala}(T_Q)$ vanish above $\dim\Supp_T H^j_{\ideala}(T)$.
\end{proof}

\begin{proposition}  \label{injDimBoundp: p}
Let $T$ be an unramified regular local ring of mixed characteristic $p > 0$, and let $\ideala$ be an ideal of $T$ such that multiplication by $p$ is injective on every local cohomology module of $T$ with support in $\ideala$.  
If $\idealb = \ideala+pT$ and $H^j_{\idealb}(T)$ is nonzero, then 
\[\InjDim_T H^j_{\idealb}(T)\leq \dim\Supp_{T} H^j_{\idealb}(T).\]
\end{proposition}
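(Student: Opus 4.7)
The plan is to show that the Bass number $\mu^i(Q, H^j_\idealb(T))$ vanishes for every prime $Q$ of $T$ and every integer $i$ exceeding $d := \dim \Supp_T H^j_\idealb(T)$. A prime $Q$ with $p \notin Q$ gives $\idealb T_Q = T_Q$ and hence $H^j_\idealb(T)_Q = 0$, so only primes $Q$ containing $p$ need to be considered, and these split into the two cases $Q \subsetneq \m$ and $Q = \m$.

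For any non-maximal prime $Q \subsetneq \m$, Lemma \ref{vanishingBass: L} applied directly to the ideal $\idealb$ (its statement requires no hypothesis on $p$-torsion) yields
\[
\Ext^i_{T_Q}(T_Q/QT_Q, H^j_\idealb(T)_Q) = 0 \qquad \text{for every } i > \dim \Supp_T H^j_\idealb(T) = d,
\]
which settles this case. For $Q = \m$, the hypothesis that multiplication by $p$ is injective on each $H^k_\ideala(T)$ puts us in a position to invoke Corollary \ref{Ext2}, yielding
\[
\dim_\kk \Ext^i_T(\kk, H^j_\idealb(T)) = \dim_\kk \Ext^i_{T/pT}(\kk, H^{j-1}_\ideala(T/pT)).
\]
Since $T/pT$ is regular of characteristic $p>0$, the Huneke--Sharp theorem bounds $\InjDim_{T/pT} H^{j-1}_\ideala(T/pT)$ above by $\dim \Supp_{T/pT} H^{j-1}_\ideala(T/pT)$, so the displayed quantity vanishes for $i$ exceeding this latter dimension.

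To finish, it remains to identify $\dim \Supp_{T/pT} H^{j-1}_\ideala(T/pT)$ with $d$. Combining the isomorphism $H^{j-1}_\idealb(T_p/T) \cong H^j_\idealb(T)$ from Lemma \ref{isom+SES: L} with the short exact sequence therein produces
\[
0 \to H^{j-1}_\ideala(T/pT) \to H^j_\idealb(T) \overset{p}{\longrightarrow} H^j_\idealb(T) \to 0.
\]
If a prime $Q \in \Supp_T H^j_\idealb(T)$ failed to lie in $\Supp_T H^{j-1}_\ideala(T/pT)$, then after localizing at $Q$ this sequence would force $p$ to act invertibly on the nonzero $\idealb$-torsion module $H^j_\idealb(T)_Q$, a contradiction. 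Hence $\Supp_T H^j_\idealb(T) = \Supp_T H^{j-1}_\ideala(T/pT) \subseteq \mathbb{V}(pT)$, and the identification $\Spec T/pT = \mathbb{V}(pT)$ yields $\dim \Supp_{T/pT} H^{j-1}_\ideala(T/pT) = d$. The main subtlety is precisely this support equality; once it is in place, the two cases assemble into the claimed bound $\InjDim_T H^j_\idealb(T) \leq d$.
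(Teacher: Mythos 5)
Your proof is correct and follows essentially the same route as the paper's: vanishing of Bass numbers at non-maximal primes via Lemma \ref{vanishingBass: L}, transfer to $T/pT$ at the maximal ideal via Corollary \ref{Ext2}, the equal-characteristic bound for $T/pT$, and the support comparison deduced from the short exact sequence in Lemma \ref{isom+SES: L} together with the observation that $p$ cannot act invertibly on a nonzero $\idealb$-torsion module. The only cosmetic difference is that you first combine both statements of Lemma \ref{isom+SES: L} to write the short exact sequence directly with $H^j_{\idealb}(T)$, while the paper localizes the sequence as stated (with $H^{j-1}_{\idealb}(T_p/T)$) and invokes the isomorphism afterward.
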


\begin{proof}
Given such a nonzero local cohomology module $H^j_{\idealb}(T)$,
we aim to show that all of its Bass numbers beyond the dimension of its support are zero.
Lemma \ref{vanishingBass: L} shows that this holds for Bass numbers with respect to non-maximal prime ideals, so it remains to show that 
\begin{equation} \label{goalBassMax: e}
\dim_{\kk} \Ext^i_{T}(\kk , H^j_{\idealb}(T)) = 0 \ \text{ for } \ i > \dim \Supp_T H^j_{\idealb}(T),
\end{equation}
where $\kk$ denotes the residue field of $T$.

To establish \eqref{goalBassMax: e}, our strategy will be to ``replace" the $T$-modules in this statement with $T/pT$-modules, and then appeal to results in \cite{LyuDMod} (the key point being that $T/pT$ is a regular local ring of characteristic $p>0$).  As a first step in this direction, observe that Corollary \ref{Ext2}
 allows us to restate \eqref{goalBassMax: e} as
\[ \dim_{\kk} \Ext^i_{T/pT} (\kk,H^{j-1}_{\idealb}(T/pT))
 = 0 \ \text{ for } \ i > \dim \Supp_T H^j_{\idealb}(T).
\]

Assume, momentarily, that 
\begin{equation}
\label{SameSupp: e}
\Supp_T H^j_{\idealb}(T) = \Supp_{T/pT} H^{j-1}_{\ideala}(T/pT), 
\end{equation}
where we regard the latter set as a subset of $\Spec (T/pT) = \mathbb{V}(pT) \subseteq \Spec T$.  Given this, we may rewrite \eqref{goalBassMax: e} as the equivalent condition 
\[ \dim_{\kk} \Ext^i_{T/pT} (\kk,H^{j-1}_{\idealb}(T/pT))
 = 0 \ \text{ for } \ i > \dim \Supp_{T/pT} H^{j-1}_{\ideala}(T/pT),
\]
and this condition holds by \cite[Corollary 3.6]{LyuDMod}.

Thus, to conclude the proof, it suffices to justify \eqref{SameSupp: e}.  Towards this,  notice that localizing the sequence given by Lemma \ref{isom+SES: L} at a prime ideal $Q $ of $T$ gives a short exact sequence
\[0\to H^{j-1}_{\ideala}(T/pT)_Q  \to H^{j-1}_{\idealb}(T_p/T)_Q  \FDer{p} H^{j-1}_{\idealb}(T_p/T)_Q  \to 0.\]
As $p \in \idealb$, multiplication by $p$ cannot be injective on $H^{j-1}_{\idealb}(T_p/T)_Q $ unless it is the zero module, and so 
$H^{j-1}_{\idealb}(T_p/T)_Q$ vanishes if and only if $H^{j-1}_{\ideala}(T/pT)_Q$ does.  In other words, 
\[  \Supp_T H^{j-1}_{\idealb}(T_p/T) = \Supp_T H^{j-1}_{\ideala}(T/pT). \] 
To establish \eqref{SameSupp: e}, simply note that the left-hand side above agrees with $\Supp_T H^j_{\idealb}(T)$ by Lemma \ref{isom+SES: L}, and that the right-hand side above agrees with $\Supp_{T/pT} H^{j-1}_{\ideala} (T/pT)$ by \eqref{SameSuppOverQuotient: e}.
\end{proof}


\begin{theorem}[cf.\,Theorem \ref{Thm Injective Module}] \label{positiveInjDim: T}
Let $S$ be a polynomial ring over $\ZZ$, and $I$ an ideal of $S$.
Let $W$ denote the set of prime integers that are zero divisors on $H^j_I(S)$ for some $j \geq 0$ (which is a finite set by Corollary\ \ref{finitelyManyBadPrimes: C}). If $Q \in \Supp_S H^j_I(S)$ does not lie over a prime in $W$, then 
\[\InjDim_S H^j_{I}(S)_Q \leq \dim \Supp_S H^j_{I}(S)_Q.\]
\end{theorem}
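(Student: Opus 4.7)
The plan is to localize at $Q$ and reduce to a statement about a single regular local ring. Setting $T = S_Q$ and $\ideala = IT$, the flatness of localization gives $H^j_I(S)_Q \cong H^j_{\ideala}(T)$, so the goal becomes $\InjDim_T H^j_{\ideala}(T) \leq \dim \Supp_T H^j_{\ideala}(T)$. I would then split on the residue characteristic of $T$: either $Q$ lies over the zero ideal of $\ZZ$, in which case $Q \cap \ZZ = 0$ forces $\QQ \subseteq T$, or $Q$ lies over some $p\ZZ$ with $p \notin W$, in which case $T$ is an unramified regular local ring of mixed characteristic $p$ (unramified because $T/pT$ is a localization of $\FF_p[x_1,\ldots,x_n]$, as in the discussion preceding Theorem \ref{Thm Lyubeznik Numbers Refined}).

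The equicharacteristic zero case is immediate from Lyubeznik's classical bound \cite[Corollary 3.6]{LyuDMod}, since $T$ is then a regular local ring containing a field. For the mixed characteristic case, the hypothesis $p \notin W$ says that multiplication by $p$ is injective on each $H^k_I(S)$ and hence, by flat base change, on each $H^k_{\ideala}(T)$. I would then verify the vanishing of Bass numbers of $H^j_{\ideala}(T)$ prime by prime. For each non-maximal prime $P \subsetneq QT$, Lemma \ref{vanishingBass: L} (built on Zhou's bound) already gives $\mu^i(P, H^j_{\ideala}(T)) = 0$ whenever $i > \dim \Supp_T H^j_{\ideala}(T)$.

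The substantive step is the control at the maximal ideal $QT$ of $T$. I would apply Corollary \ref{Ext nzd} to obtain the identity
\[ \dim_\kk \Ext^i_T(\kk, H^j_{\ideala}(T)) = \dim_\kk \Ext^{i-1}_{T/pT}(\kk, H^j_{\ideala}(T/pT)), \]
which transfers the question to the equicharacteristic regular local ring $T/pT$. Using $p$-injectivity on $H^j_{\ideala}(T)$ once more together with \eqref{boundOnDimSupp: e} yields $\dim \Supp_{T/pT} H^j_{\ideala}(T/pT) \leq \dim \Supp_T H^j_{\ideala}(T) - 1$, and Lyubeznik's equicharacteristic bound \cite[Corollary 3.6]{LyuDMod} applied over $T/pT$ then gives the desired vanishing of the right-hand side for all $i > \dim \Supp_T H^j_{\ideala}(T)$.

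The main obstacle is precisely this estimate at the maximal ideal, and it is where the hypothesis $p \notin W$ is indispensable: without $p$-torsion-freeness there is no way to compare the Bass numbers of $H^j_{\ideala}(T)$ cleanly to those of an equicharacteristic local cohomology module. The argument closely parallels the proof of Proposition \ref{injDimBoundp: p}, but because we do not assume $p \in \ideala$ we invoke the nonzerodivisor version (Corollary \ref{Ext nzd}) in place of the $p$-torsion version (Corollary \ref{Ext2}) used there.
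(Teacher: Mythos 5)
Your proof is correct and follows essentially the same route as the paper's: localize at $Q$, invoke Lemma \ref{vanishingBass: L} for non-maximal primes, then use Corollary \ref{Ext nzd} together with the dimension drop from \eqref{boundOnDimSupp: e} to reduce the Bass-number bound at $QT$ to Lyubeznik's equicharacteristic bound over $T/pT$. The only (harmless) difference is that you explicitly treat the residue-characteristic-zero case, whereas the paper's proof passes straight to the mixed characteristic situation, implicitly treating the $Q \cap \ZZ = 0$ case as an immediate application of \cite[Corollary 3.6]{LyuDMod}.
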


\begin{proof}
Suppose $Q$ is a prime ideal in $\Supp_S H^j_I(S)$ lying over $p \notin W$. 
Let $T=S_Q$, and let $\kk$ be its residue field.  As in the proof of Proposition \ref{injDimBoundp: p}, we reduce to proving a statement about $T/pT$-modules, and then appeal to \cite{LyuDMod}.  We stress, however, that the reduction step here is different from the one used in the proof of Proposition \ref{injDimBoundp: p}.

Since all Bass numbers of $H^j_{IT}(T)$ with respect to non-maximal prime ideals of $T$ vanish beyond the dimension of its support by Lemma \ref{vanishingBass: L}, it is enough to show that 
\[ \dim_K \Ext^i_{T}(\kk,H^j_{IT}(T)) = 0 \text{ if } i > \dim \Supp_T H^j_{IT}(T), \] which, by Corollary \ref{Ext nzd}, may be restated as 
\begin{equation}
\label{restatementVanishingBassNumbers: e}
 \dim_K\Ext^{i-1}_{T/pT}(\kk,H^j_{IT}(T/pT)) = 0 \text{ if } i > \dim \Supp_T H^j_{IT}(T). \end{equation} 

By our choice of $p$, \eqref{boundOnDimSupp: e} shows that \[ \dim \Supp_{T/pT} H^j_{IT}(T/pT) +1 \leq \dim \Supp_T H^j_{IT}(T),\] and therefore, the condition in \eqref{restatementVanishingBassNumbers: e} holds whenever 
\[  \dim_K\Ext^{i-1}_{T/pT}(\kk,H^j_I(T/pT)) = 0 \text{ if } i -1> \dim \Supp_T H^j_{IT}(T). \]  
However, this last condition holds by \cite[Corollary 3.6]{LyuDMod}.
\end{proof}

\subsection{On the condition that local cohomology of $\ZZ$-algebras descends from $\QQ$}
\label{DescendsFromQQ: ss}

Let $S$ be a polynomial ring over $\ZZ$, and $I$ an ideal of $S$.  In this subsection, we consider the condition that \[ \QQ \otimes_{\ZZ} H^j_I(S) \neq 0, \] which appears in the statement of Theorem \ref{Thm Injective Module}, but not in the statement of Theorem \ref{positiveInjDim: T}.   

First, we consider what occurs when this condition fails:  Fix an integer $j \geq 0$, and set $H = H^j_I(S)$.  If $\QQ \otimes_{\ZZ} H= 0$, then no associated prime of $H$ lies over $0 \in \ZZ$.  Consequently, as any prime $Q \in \Supp_S H$ contains an associated prime $Q_0$ of $H$, the preceding observation shows that $p \ZZ =  Q_0 \cap \ZZ \subseteq Q \cap \ZZ$ for some prime integer $p>0$, and it follows that $Q \cap \ZZ = p\ZZ$ as well.  However, being an element of an associated prime,  $p$ is a zero divisor on $H$, and therefore, $Q$ lies over a prime contained in the set $W$ appearing in the statement of Theorem \ref{positiveInjDim: T}.  It follows from this that for such local cohomology modules, Theorem \ref{positiveInjDim: T} is vacuous.   We provide a concrete example of this situation below.

\begin{example}
Let $S$ be a polynomial ring in six indeterminates over $\ZZ$,  
and let $I$ denote the monomial ideal corresponding to Reisner's variety \cite{Reisner}.  It is know that $H=H^4_I(S)$ is nonzero \cite[Example 1]{LyuMonomial}.  On the other hand, $\QQ \otimes_\ZZ S/I$ is Cohen-Macaulay \cite[Remark 3]{Reisner}, and therefore, $\QQ \otimes_\ZZ  H = 0$ by \cite[Proposition 3.1]{Montaner}.  By the preceding discussion, Theorem \ref{positiveInjDim: T} is an empty statement in this case.   See Section \ref{counterex} for a closely-related example.
\end{example}

Next, we consider the situation when local cohomology of $S$ descends from $\QQ$.  In Corollary \ref{justification: C} below, we show that this condition guarantees that Theorem \ref{positiveInjDim: T} is \emph{not} vacuous.  We start with a lemma that is well known to experts.

\begin{lemma} \label{primeUnits: L}
If $A$ is finitely generated as an algebra, and torsion-free, over $\ZZ$, then there are only finitely many prime integers that are units of $A$.
\end{lemma}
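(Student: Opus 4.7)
The plan is to reduce the claim to a direct application of generic freeness. To set the stage, observe that a prime integer $p$ is a unit in $A$ precisely when $pA = A$, equivalently, when $A/pA = A \otimes_{\ZZ} \FF_p$ vanishes. Since $A$ is torsion-free over $\ZZ$, the structure map $\ZZ \to A$ is injective; in particular, $A$ is nonzero and $A \otimes_{\ZZ} \QQ \neq 0$, so the ``generic fiber'' of $\Spec A \to \Spec \ZZ$ is nonempty.

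The main step is to invoke Grothendieck's generic freeness lemma for the finitely generated algebra $A$ over the Noetherian domain $\ZZ$: there exists a nonzero integer $n$ such that the localization $A[1/n]$ is a free module over $\ZZ[1/n]$. Because $A$ is torsion-free, the localization map $A \hookrightarrow A[1/n]$ is injective, so $A[1/n]$ is a nonzero free $\ZZ[1/n]$-module and hence of positive rank.

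To conclude, fix any prime integer $p$ that does not divide $n$. Then $n$ becomes a unit in $\FF_p$, and consequently there is a natural isomorphism
\[
A \otimes_{\ZZ} \FF_p \;\cong\; A[1/n] \otimes_{\ZZ[1/n]} \FF_p,
\]
whose right-hand side is a free $\FF_p$-module of positive rank, in particular nonzero. Thus $A/pA \neq 0$ and $p$ is not a unit in $A$. Every prime integer that is a unit in $A$ must therefore divide $n$, and since $n$ is a nonzero integer there are only finitely many such primes. There is no serious obstacle here: the only content beyond bookkeeping is the invocation of generic freeness, which is a standard tool applicable in exactly this setting (one could equivalently phrase the argument by noting that $\Spec A \to \Spec \ZZ$ is a dominant, flat, finite-type morphism and therefore its image is a dense open in $\Spec \ZZ$, whose complement is a finite set of closed points).
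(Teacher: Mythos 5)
Your proof is correct and uses the same key tool as the paper, namely Grothendieck's generic freeness applied to $A$ over $\ZZ$, together with the torsion-free hypothesis to guarantee the resulting free module has positive rank. The only difference is cosmetic: the paper phrases the conclusion as a contradiction involving the intersection $\bigcap_k p_k A$, whereas you argue directly that $A \otimes_{\ZZ} \FF_p \neq 0$ for any prime $p$ not dividing the denominator produced by generic freeness; your direct formulation is, if anything, slightly cleaner.
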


\begin{proof}
Suppose that there exists an infinite sequence of prime integers $\{ p_k \}_{k=1}^\infty$ that are units of $A$.  
By generic freeness \cite[Lemma 8.1]{HochsterRoberts}, there exists a non-zero integer $d$ such that $A_d$ is free over $\ZZ_d$.  For $k \gg 0$, the prime integer $p_k$ does not divide $d$, and  therefore, $\cap_{k=1}^\infty p_k \ZZ_d = 0$;  as $A_d$ is free over $\ZZ_d$, we also have that $\cap_{k=1}^\infty p_k A_d = 0$.  However, because multiplication by $d$ is injective on $A$, we have that $A \subseteq A_d$, and therefore, $\cap_{k=1}^\infty p_k A \subseteq \cap_{k=1}^\infty p_k A_d = 0$, which contradicts the assumption that each $p_k A = A$ for every $k \geq 1$.
\end{proof}

\begin{corollary}
\label{justification: C}  Let $A$ be a domain that contains $\ZZ$, and that is finitely generated as an algebra over $\ZZ$,  and let $I$ be an ideal of $A$.  If $\QQ \otimes_{\ZZ} H^j_I(A) \neq 0$, then for all but finitely many prime integers $p>0$, there exists a prime $Q \in \Supp_A H^j_I(A)$ such that $Q \cap \ZZ = p \ZZ$.
\end{corollary}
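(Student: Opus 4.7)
The plan is to first produce, from the hypothesis $\QQ \otimes_{\ZZ} H^j_I(A) \neq 0$, a prime $\mathfrak{q} \in \Supp_A H^j_I(A)$ with $\mathfrak{q} \cap \ZZ = 0$; then to pass to the domain $A/\mathfrak{q}$ and apply Lemma \ref{primeUnits: L}; and finally, for each prime $p$ outside the resulting exceptional set, to produce a prime $Q \supseteq \mathfrak{q}$ containing $p$, which will be the desired prime of the support lying over $p\ZZ$.

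For the first step, the hypothesis $\QQ \otimes_{\ZZ} H^j_I(A) \neq 0$ provides an element $m \in H^j_I(A)$ whose image in the localization $\QQ \otimes_{\ZZ} H^j_I(A)$ is nonzero; equivalently, $m$ is not killed by any nonzero integer, so $\operatorname{Ann}_A(m) \cap \ZZ = 0$. Since $\ZZ$ is Noetherian and $A$ is a finitely generated $\ZZ$-algebra, $A$ is Noetherian, so $\operatorname{Ann}_A(m)$ has only finitely many minimal primes $\mathfrak{q}_1, \ldots, \mathfrak{q}_r$, whose intersection equals $\sqrt{\operatorname{Ann}_A(m)}$. Contracting to $\ZZ$ yields $\bigcap_i (\mathfrak{q}_i \cap \ZZ) = \sqrt{\operatorname{Ann}_A(m) \cap \ZZ} = 0$, and since a finite intersection of nonzero primes of the PID $\ZZ$ is nonzero, at least one $\mathfrak{q}_i$, call it $\mathfrak{q}$, must satisfy $\mathfrak{q} \cap \ZZ = 0$. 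Because $\mathfrak{q} \supseteq \operatorname{Ann}_A(m)$, we have $\mathfrak{q} \in \Supp_A(Am) \subseteq \Supp_A H^j_I(A)$.

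For the second step, $A/\mathfrak{q}$ is a domain that is finitely generated as a $\ZZ$-algebra and, because $\mathfrak{q} \cap \ZZ = 0$, contains $\ZZ$; in particular it is torsion-free over $\ZZ$. Lemma \ref{primeUnits: L} then provides a finite set $W$ of prime integers outside of which $p(A/\mathfrak{q}) \neq A/\mathfrak{q}$, equivalently $pA + \mathfrak{q} \neq A$.

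For the third step, fix any prime $p \notin W$, and pick any prime $Q$ of $A$ containing $pA + \mathfrak{q}$. Then $Q \supseteq \mathfrak{q}$ puts $Q$ in $\Supp_A H^j_I(A)$, while $p \in Q$ forces $Q \cap \ZZ$ (a prime of $\ZZ$) to contain $p\ZZ$; maximality of $p\ZZ$ gives $Q \cap \ZZ = p\ZZ$. I expect the first step to be the main delicate point: since $\operatorname{Ann}_A(m)$ need not itself be prime, one must argue carefully that some minimal prime over it lies over $0 \in \Spec \ZZ$, which is precisely where the fact that $\ZZ$ is a one-dimensional PID enters.
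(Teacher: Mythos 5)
Your proof is correct and follows essentially the same route as the paper: find a prime in $\Supp_A H^j_I(A)$ lying over $0 \in \Spec\ZZ$, pass to the quotient domain, apply Lemma \ref{primeUnits: L}, and lift a prime over $p$ back to $A$. The only cosmetic difference is in the first step, where the paper simply invokes the existence of an associated prime of $H^j_I(A)$ lying over $0$, while you derive a suitable prime of the support more explicitly via minimal primes over an annihilator $\operatorname{Ann}_A(m)$.
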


\begin{proof}
The hypothesis that $\QQ \otimes_\ZZ H^j_I(A) \neq 0$ implies that there exists an associated prime $Q_0$ of $H^j_I(A)$ such that $Q_0 \cap \ZZ = 0$.  Set $B = A / Q_0$.  By hypothesis, $\ZZ \subseteq A$, and as $Q_0 \cap \ZZ = 0$, we also have that $\ZZ = \ZZ / (Q_0 \cap \ZZ) \subseteq B$.  As $B$ is a domain, this shows that $B$ is a finitely generated algebra and torsion-free over $\ZZ$, and by Lemma \ref{primeUnits: L}, there exists $N >0$ such that any prime $p>N$ is not a unit of $B$.
For such a prime, $Q_0 + pA \neq A$ (otherwise, $q + pa = 1$ for some $q \in Q_0$ and $a \in A$, which would then imply that $p$ was a unit of $B$). If $Q$ is any prime ideal of $A$ containing the proper ideal $Q_0 + pA$, then $Q$ contains $p$, by construction, and $Q$ is in $\Supp_A H^j_I(A)$, since it contains $Q_0$.
\end{proof}


\subsection{Positive answers to the question on iterated local cohomology} 

In this subsection, we address Lyubeznik's second question concerning the injectivity of certain iterated local cohomology modules.  Before we start our discussion, we recall a result of the second and fourth authors (this result plays an important role in both the current and subsequent section).

\begin{remark}[A criterion for injectivity]

 \label{Rem Sub Injective}   
Let $A$ be a power series ring over a complete Noetherian discrete valuation ring $(V, pV)$ of mixed characteristic $p>0$, and let $D=D(A,V)$ denote the ring of $V$-linear differential operators on $A$.  If $H$ is any (iterated) local cohomology module of $A$, then $H$ is a $D$-module \cite[Example 2.1(iv)]{LyuDMod} and has finite Bass numbers over $A$ \cite[Theorem 5.1]{NunezPR}.  Consequently, if we assume further that $H$ is supported only at the maximal ideal of $A$, then \cite[Lemma 4.2]{NuWiMixChar} implies that $H$ is an injective $A$-module if and only if multiplication by $p$ on $H$ is surjective.  
\end{remark}

\begin{remark}[A specialized criterion for injectivity]  
\label{specializedCriterion: R}
Fix a prime integer $p>0$, and let $S$ be either a polynomial ring over $\ZZ$, or 
over the localization of $\ZZ$ at ${p\ZZ}$.
Let $A$ be the completion of $S$ at the maximal ideal $\m$ generated by $p$ and the variables in $S$. 
Note that regardless of the choice of $S$, $A$ is a power series ring over the $p$-adic integers. 
Fix an (iterated) local cohomology module $H$ of $S$, and suppose that $H$ is supported only at the maximal ideal $\m$.  This condition implies that, when regarded as an $A$-module in the natural way,  $H$ is isomorphic to an (iterated) local cohomology module of $A$.  Thus, by Remark \ref{Rem Sub Injective}, if multiplication by $p$ on $H$ is surjective, then $H$ is injective over $A$.  However, as the $S$-module $H$ is supported only at $\m$, this implies that $H$ is injective over $S$ as well.  

\end{remark}

\begin{proposition}
 \label{ILC: p}
Let $S$ be a polynomial ring over $\ZZ$, and let $\n$ be the ideal of $S$ generated by the variables.  Fix an ideal $I$ of $S$, and let $W$ denote the set of prime integers that are zero divisors on some (iterated) local cohomology module of the form $H^j_I(S)$ or $H^i_{\n} H^j_I(S)$ (which is a finite set by Corollary \ref{finitelyManyBadPrimes: C}).  If $p \notin W$ and $\m = \n + pS$, then the iterated local cohomology modules $H^i_{\m} H^j_{I}(S)$ and $H^i_{\m} H^j_{I+pS}(S)$ are injective for all $i,j$ (and, in fact, $H^i_{\m} H^j_{I+pS}(S) \cong H^{i+1}_{\m} H^{j-1}_{I}(S)$).
\end{proposition}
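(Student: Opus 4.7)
The strategy is to verify the injectivity criterion from Remark \ref{specializedCriterion: R}: both $H^i_\m H^j_I(S)$ and $H^i_\m H^j_{I+pS}(S)$ are iterated local cohomology modules of $S$ whose supports lie in the single point $\{\m\}$, so to conclude injectivity it suffices to show that multiplication by $p$ acts surjectively on each. The injectivity assertions and the isomorphism will all be consequences of the two long exact sequences from Subsection \ref{Loc Coh}, applied with $f = p$, together with the standard vanishing $H^i_\ideala(N_p) = 0$ for any $S$-module $N$ whenever $p \in \ideala$ (immediate because $\ideala S_p = S_p$).

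For the modules $H^i_\m H^j_I(S)$, I plan to apply the long exact sequence with $\ideala = \n$, $f = p$, and $M = H^j_I(S)$, using that $H^k_\n$ commutes with localization at $p$:
\[
\cdots \to H^k_\m H^j_I(S) \to H^k_\n H^j_I(S) \to (H^k_\n H^j_I(S))_p \to H^{k+1}_\m H^j_I(S) \to \cdots.
\]
Since $p \notin W$, multiplication by $p$ is injective on $H^k_\n H^j_I(S)$, so the localization map is injective for every $k$. The long exact sequence collapses into short exact sequences
\[
0 \to H^k_\n H^j_I(S) \to (H^k_\n H^j_I(S))_p \to H^{k+1}_\m H^j_I(S) \to 0,
\]
forcing $H^0_\m H^j_I(S) = 0$; since the middle term is $p$-divisible, so is its quotient $H^{k+1}_\m H^j_I(S)$, and Remark \ref{specializedCriterion: R} delivers injectivity.

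For the isomorphism $H^i_\m H^j_{I+pS}(S) \cong H^{i+1}_\m H^{j-1}_I(S)$, I would apply the same type of sequence but now with $\ideala = I$, $f = p$, and $M = S$:
\[
\cdots \to H^j_{I+pS}(S) \to H^j_I(S) \to H^j_I(S)_p \to H^{j+1}_{I+pS}(S) \to \cdots.
\]
Again by $p \notin W$, every map $H^j_I(S) \to H^j_I(S)_p$ is injective, so this long exact sequence collapses into short exact sequences
\[
0 \to H^{j-1}_I(S) \to H^{j-1}_I(S)_p \to H^j_{I+pS}(S) \to 0.
\]
Applying $H^i_\m$ and using the vanishing $H^i_\m(H^{j-1}_I(S)_p) = 0$, the resulting long exact sequence pinches down to the asserted isomorphism $H^i_\m H^j_{I+pS}(S) \cong H^{i+1}_\m H^{j-1}_I(S)$, and injectivity of the right-hand side transfers to the left.

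The main (modest) obstacle is to keep careful track of which part of the hypothesis $p \notin W$ powers each step: the nonzerodivisor property on each $H^j_I(S)$ drives the collapse of the second long exact sequence and underpins the vanishing $H^i_\m(H^{j-1}_I(S)_p) = 0$, while the stronger nonzerodivisor property on the iterated modules $H^k_\n H^j_I(S)$ is exactly what makes the first long exact sequence deliver the needed $p$-divisibility.
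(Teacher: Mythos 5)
Your proposal is correct and follows essentially the same route as the paper's proof: both parts use precisely the two localization long exact sequences with $f = p$, the collapse into short exact sequences forced by $p$ being a nonzerodivisor on the relevant (iterated) local cohomology modules, the vanishing of $H^i_\m$ applied to any $S_p$-module because $p \in \m$, and Remark \ref{specializedCriterion: R} to convert surjectivity of multiplication by $p$ into injectivity. Your extra observation that $H^0_\m H^j_I(S) = 0$ is a harmless addition that the paper leaves implicit (the zero module is trivially $p$-divisible).
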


\begin{proof}
Consider the long exact sequence 
\begin{equation} \label{LES}
\cdots \longrightarrow H^i_{\m} H^j_{I}(S) \longrightarrow H^i_{\n} H^j_{I}(S) {\longrightarrow} H^i_{\n} H^j_{I}(S)_p {\longrightarrow} H^{i+1}_{\m} H^j_{I}(S) \longrightarrow \cdots.
\end{equation}
By our choice of $p$, each localization map in this sequence is injective, and therefore, each map 
\[ H^i_{\n} H^j_{I}(S)_p {\longrightarrow} H^{i+1}_{\m} H^j_{I}(S) \] is surjective.  Therefore, as multiplication by $p$ is surjective on $H^i_\n H^j_{I}(S)_p$, it is also surjective on the quotient $H^i_{\m} H^j_{I}(S)$.  As the iterated local cohomology module $H^i_{\m} H^j_I(S)$ is supported only at $\m$, it follows from Remark \ref{specializedCriterion: R} that  $H^i_{\m} H^j_I(S)$ is an injective $S$-module.

Next, consider the long exact sequence 
\[\cdots \to  H^{j-1}_{I}(S)\to H^{j-1}_{I}(S_p)\to H^j_{I+pS}(S) \to \cdots.\]
As above, the localization maps in this sequence are injective,  and so we obtain short exact sequences
 $0\to  H^{j-1}_{I}(S)\to H^{j-1}_{I}(S_p)\to H^j_{I+pS}(S) \to 0$, each of which induces
 \[
\cdots \to H^i_{\m}H^{j-1}_{I}(S_p)\to H^i_{\m} H^j_{I+pS}(S)\to H^{i+1}_{\m} H^{j-1}_{I}(S)\to H^{i+1}_{\m} H^{j-1}_{I}(S_p) \to \cdots.
\]
Since $p$ is contained in $\m$, all modules of the form $H^i_{\m}H^j_{I}(S_p)$ vanish.  Thus,
 $H^i_{\m} H^j_{I+pS}(S)$ is isomorphic to $H^{i+1}_{\m} H^{j-1}_{I}(S)$, the latter of which we have already established is injective.
\end{proof}

\section{A negative answer to Lyubeznik's question on injective dimension} 
\label{counterex}


In this section, we fix the following conventions.
For clarity in notation, we set $p=2$.  We also let $V$ be the localization of $\ZZ$ at $p\ZZ$, and set $S=V[x_1,\ldots,x_6]$.  
Let $\m$ denote the maximal ideal of $S$ generated by $p$ and the indeterminates, and let $I$ be the ideal generated by the following monomials (cf.\,\cite[Remark 6.11]{NuWiMixChar}):
\begin{alignat*}{20}
&{\mu}_1=x_1x_2x_3 , \ \ \ &{\mu}_2=x_1x_2x_4 , \ \ \ &{\mu}_3=x_1x_3x_5, \ \ \ &{\mu}_4=x_1x_4x_6, \ \ \ &{\mu}_5=x_1x_5x_6, \\
&{\mu}_6=x_2x_3x_6, \ \ \ &  {\mu}_7=x_2x_5x_6, \ \ \ &{\mu}_8=x_2x_4x_5, \ \ \ &{\mu}_9=x_3x_4x_5, \ \ \ &{\mu}_{10}=x_3x_4x_6.
\end{alignat*}

The goal of this section is to show that $H^4_I(S)$ can be used to provide a negative answer to Lyubeznik's question (i.e., Question \ref{Q InjDim} from the introduction).


We begin with an important preliminary observation:  Consider the long exact sequence
\begin{equation*}
\cdots \to H^3_{I+pS}(S) \to H^3_I(S)\to H^3_I(S_p)\to H^4_{I+pS}(S)\to H^4_I(S)\to H^4_I(S_p) \to \cdots.
\end{equation*}
It was shown in  \cite[Remark 6.3 and the proof of Proposition 6.10]{NuWiMixChar} that $H^j_{I+pS}(S)$ vanishes unless $j = 4$.  Furthermore, $S_p / I S_p$ is known to be Cohen-Macaulay \cite[Remark 3]{Reisner}, and as $S_p$ is a polynomial ring over $\QQ$, we may apply \cite[Proposition 3.1]{Montaner} to conclude that
\begin{equation}\label{locLCzero: e}
H^j_I(S_p)=0 \text{ if } j \neq \dim(S_p)-\dim\(S_p/IS_p\) = 3.
\end{equation}
Therefore, the long exact sequence above reduces to 
\begin{equation} \label{sequence CE}
0\to H^3_I(S)\to H^3_I(S_p)\to H^4_{I+pS}(S)\to H^4_I(S)\to 0.
\end{equation}


\begin{lemma}\label{Supp Zero}
$\Supp_S H^4_I(S) \subseteq \{ \m \}$.
\end{lemma}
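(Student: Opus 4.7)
The plan is to verify that $H^4_I(S)_Q = 0$ for every prime $Q \neq \m$ of $S$. Since $\m = (p, x_1, \ldots, x_6)$, any proper prime $Q \subsetneq \m$ fails to contain at least one of the generators $p, x_1, \ldots, x_6$, and this naturally splits the argument into two cases. If $p \notin Q$, then $H^4_I(S)_Q$ is a further localization of $H^4_I(S)_p \cong H^4_I(S_p)$, and the latter vanishes by \eqref{locLCzero: e}.

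Now suppose $p \in Q$ but $Q \neq \m$, so that $x_i \notin Q$ for some index $i$. It suffices to show that $H^4_I(S)[x_i^{-1}] = 0$. The key step is to identify the extended ideal $I \cdot S[x_i^{-1}]$ combinatorially. The ten generators $\mu_1, \ldots, \mu_{10}$ of $I$ are precisely the minimal non-faces of the minimal six-vertex triangulation $\Delta$ of $\mathbb{R}\mathrm{P}^2$; upon inverting $x_i$, the five $\mu_j$ divisible by $x_i$ become (up to a unit) degree-two monomials corresponding to the non-edges of $\operatorname{link}_\Delta(i)$, and a direct check shows that each of the remaining five generators of $I$ is divisible by one of these degree-two monomials, hence becomes redundant. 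Thus $I \cdot S[x_i^{-1}]$ is the extension of the Stanley-Reisner ideal of $\operatorname{link}_\Delta(i)$. A case-by-case verification (or invocation of the vertex-transitive $A_5$-symmetry of $\Delta$) shows that this link is a $5$-cycle for every $i \in \{1, \ldots, 6\}$.

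Writing $J \subset V[x_a : a \neq i]$ for this Stanley-Reisner ideal, the $5$-cycle is a $1$-sphere, and hence its Stanley-Reisner ring over every field is Cohen-Macaulay of dimension $2$. Since $p$ is a nonzerodivisor on $V[x_a : a \neq i]/J$, and the reduction modulo $p$ is the Cohen-Macaulay $\FF_p$-algebra just described, $V[x_a : a \neq i]/J$ is itself Cohen-Macaulay of dimension $3$. Consequently, $H^k_J(V[x_a : a \neq i])$ vanishes for all $k$ different from the height $3$ of $J$, and in particular for $k = 4$. Flat base change to $S[x_i^{-1}]$ then yields $H^4_I(S)[x_i^{-1}] = H^4_{JS[x_i^{-1}]}(S[x_i^{-1}]) = 0$, finishing this case.

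The main obstacle is the combinatorial identification of $I \cdot S[x_i^{-1}]$ with the Stanley-Reisner ideal of the link of $i$, together with the verification that each of the six links in $\Delta$ is a $5$-cycle; once these are in hand, the rest reduces to standard facts about Cohen-Macaulay rings and flat base change.
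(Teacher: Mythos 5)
Your strategy---showing $H^4_I(S)_Q = 0$ for every prime $Q \neq \m$ by localizing at each generator of $\m$---is genuinely different from, and more conceptual than, the paper's. The paper instead peels off the monomial generators of $I$ one at a time via Brodmann long exact sequences, obtains surjections from explicitly computed local cohomology modules onto $H^4_I(S)$, and intersects the resulting three closed sets with $\mathbb{V}(pS)$. Your observation that inverting a variable $x_i$ reduces $I$ to the Stanley--Reisner ideal of $\operatorname{link}_\Delta(i)$, and that all six vertex links of the minimal triangulation $\Delta$ of $\mathbb{R}\mathrm{P}^2$ are $5$-cycles, is correct and explains the vanishing away from $\mathbb{V}(x_1,\ldots,x_6)$ cleanly; the $p\notin Q$ case is also handled correctly via \eqref{locLCzero: e}.

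There is, however, a genuine gap at the last step. You pass from ``$V[x_a:a\neq i]/J$ is Cohen--Macaulay'' to ``$H^k_J(V[x_a:a\neq i]) = 0$ for $k\neq 3$.'' Vanishing of $H^k_J$ above $\operatorname{ht}(J)$ for a Cohen--Macaulay quotient of a regular ring is the Peskine--Szpiro theorem in positive characteristic and Ogus's theorem in equal characteristic zero, but it is \emph{not} a known theorem for regular rings of mixed characteristic, which is exactly the setting here since $V = \ZZ_{(p)}$. The information that actually comes for free---that $H^4_J(V[x_a:a\neq i])$ is $p$-power torsion (it dies after inverting $p$, since $\QQ[x_a:a\neq i]/J$ is Cohen--Macaulay) and $p$-divisible (since $H^4_J(\FF_p[x_a:a\neq i]) = 0$ makes multiplication by $p$ surjective)---does not force vanishing, because modules such as $\QQ/\ZZ_{(p)}$ have both properties. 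One way to close the gap is to use the $\ZZ^5$-multigrading: each graded piece of $H^4_J(\ZZ[x_a:a\neq i])$ is a finitely generated abelian group (a reduced simplicial cohomology group of a subcomplex), and being torsion and $\ell$-divisible for every prime $\ell$, each must vanish; flatness of $V$ over $\ZZ$ then gives $H^4_J(V[x_a:a\neq i]) = 0$. Alternatively one can bound the arithmetic rank of the $5$-cycle edge ideal by $3$. Either repair works, but as written the appeal to Cohen--Macaulayness in mixed characteristic does not establish the needed vanishing, and without it the argument is incomplete.
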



\begin{proof}
For $1 \leq j \leq 7$, let $\ideala_j$ denote the ideal of $S$ generated by ${\mu}_{j+1}, \ldots, {\mu}_{10}$, and let $\idealb_j = \ideala_j S_{{\mu}_j}$.
A direct computation shows that $\idealb_1 =(x_4,x_5,x_6) S_{{\mu}_1}$, and therefore,
\begin{align} \label{LCcomp: e} 
\begin{split}
H^3_{\idealb_1}(S_{{\mu}_1})   &\cong \(H^3_{(x_4, x_5, x_6)} (S) \)_{x_1 x_2 x_3} 
\cong \( \frac{ S_{x_4 x_5 x_6} }{ S_{x_4 x_5} + S_{x_4 x_6} + S_{x_5 x_6} } \)_{x_1 x_2 x_3} \\
&\cong \frac{ S_{x_1x_2x_3x_4x_5x_6} }{  S_{x_1x_2x_3x_4x_5}+S_{x_1x_2x_3x_4x_6}+S_{x_1x_2x_3x_5x_6} . }
\end{split}
\end{align}
In particular, this module vanishes after localizing at any of $x_4, x_5$, or $x_6$, so that
\begin{equation} \label{suppH3: e}
\Supp_S H^3_{\idealb_1}(S_{{\mu}_1})  \subseteq \mathbb{V}(x_4,x_5,x_6).
\end{equation} 

The following computations imply that $H^i_{\idealb_j}(S_{\mu_j})=0$ for $i \geq 3$ and $2 \leq j \leq 7$:
\begin{alignat*}{20}
&\idealb_2 = (x_5,x_6) S_{{\mu}_2} , & \ \ \ \ &\idealb_3 = (x_4,x_6) S_{{\mu}_3} , & \ \ \ \ &\idealb_4 = (x_3,x_5) S_{{\mu}_4}  \\
&\idealb_5 = (x_2,x_3x_4) S_{{\mu}_5} , & \ \ \ &\idealb_6 = (x_4,x_5) S_{{\mu}_6}, &\ \ \ \ &\idealb_7 = (x_4) S_{{\mu}_7}.
\end{alignat*}
Then, substituting this vanishing into the long exact sequence
\begin{equation*} \label{LESFirstStep2: e}
\cdots \to H^3_{\idealb_j}(S_{{\mu}_j})\to H^4_{\ideala_{j-1} S}(S)\to H^4_{\ideala_j S}(S)\to H^4_{\idealb_j}(S_{{\mu}_j})\to \cdots,
\end{equation*}
shows $H^4_{\ideala_{j-1} S}(S) \cong H^4_{\ideala_j S}(S)$ for $2 \leq j \leq 6$, and therefore, that 
\begin{equation*} \label{H4zero: e}
H^4_{\ideala_1}(S) =  \cdots  = H^4_{\ideala_7}(S) = 0,
\end{equation*}
where the last equality holds since $\ideala_7$ is generated by the three monomials $\mu_1, \mu_2, \mu_3 \in S$.
Substituting this vanishing into the long exact sequence 
\begin{equation*}
\label{LESFirstStep1: e} \cdots \to H^3_{\ideala_1}(S) \to H^3_{\idealb_1}(S_{{\mu}_1}) \to H^4_I(S) \to H^4_{\ideala_1}(S)\to H^4_{\idealb_1}(S_{{\mu}_1})\to \cdots,
\end{equation*}
shows that $H^3_{\idealb_1}(S)$ surjects onto $H^3_{I}(S)$.  It follows from this and \eqref{suppH3: e} that
\begin{equation} \label{supp456: e}
\Supp_S H^4_I(S) \subseteq \Supp H^3_{\idealb_1}(S_{\mu_1})   \subseteq \mathbb{V}(x_4,x_5,x_6).
\end{equation}

We next construct another closed set containing  $\Supp_S H^4_I(S)$ using a similar argument.
For $3 \leq j \leq 9$, let $\idealc_j$ be the ideal of $S$ generated by $\mu_1, \ldots, \mu_{j-1}$.
Let $\ideald_j = \idealc_j \, S_{\mu_j}$, so that 
\begin{alignat*}{20}
&\ideald_4 = (x_2,x_3x_5) S_{{\mu}_4}, & \ \ \ \ &\ideald_5 = (x_3,x_4)  S_{{\mu}_5}, & \ \ \ \ &\ideald_6 = (x_1) S_{{\mu}_6}, \\
&\ideald_7 = (x_1,x_3) S_{{\mu}_7} & \ \ \ \ &\ideald_8 = (x_1,x_6) S_{{\mu}_8}, & \ \ \ \ &\ideald_9 = (x_1,x_2) S_{{\mu}_9},
\end{alignat*}
and $\ideald_{10} = (x_1,x_2,x_5) S_{\mu_{10}}$.  As before, this shows that $H^i_{\ideald_j}(S_{\mu_j}) = 0$ for $i \geq 3$ and $4 \leq j \leq 10$, and combining this with the long exact sequence
\begin{equation*} \label{LESFirstStep2: e}
\cdots \to H^3_{\ideald_j}(S_{{\mu}_j})\to H^4_{\idealc_{j+1} S}(S)\to H^4_{\idealc_j S}(S)\to H^4_{\ideald_j}(S_{{\mu}_j})\to \cdots,
\end{equation*}
shows that $ H^4_{\idealc_{10}}(S) =  \cdots  = H^4_{\idealc_4}(S)=0$, where the last equality holds since $\idealc_4$ can be generated by the three monomials $\mu_8, \mu_9, \mu_{10} \in S$.  This and the long exact sequence
\begin{equation*}
\label{LESFirstStep1: e} \cdots \to H^3_{\idealc_{10}}(S) \to H^3_{\ideald_{10}}(S_{{\mu}_{10}})  \to H^4_I(S) \to H^4_{\idealc_{10}}(S)\to H^4_{\ideald_{10}}(S_{{\mu}_{10}})\to \cdots,
\end{equation*}
imply that $H^3_{\ideald_{10}}(S_{{\mu}_{10}})$ surjects onto $H^4_I(S)$, and by isomorphisms analogous to those in \eqref{LCcomp: e}, we can conclude that  $\Supp_S H^3_{\ideald_{10}}(S_{\mu_{10}})$ is contained in $\mathbb{V}(x_1, x_2, x_5)$, so that
\begin{align} \label{supp125: e} 
\Supp_S H^4_I(S) &\subseteq \Supp_S H^3_{\ideald_{10}}(S_{{\mu}_{10}}) \subseteq \mathbb{V}(x_1,x_2,x_5).
\end{align}

Notice that after interchanging $x_1$ with $x_3$ and $x_4$ with $x_6$, the set of generators of $I$ remains the same.
Thus, applying this symmetry to \eqref{supp125: e} implies that $\Supp_S H^4_I(S)\subseteq \mathbb{V}(x_2,x_3,x_5)$ as well, and these containments and \eqref{supp456: e} allow us to conclude that
\[
\Supp_S H^4_I(S)\subseteq \mathbb{V}(x_4,x_5,x_6)\cap \mathbb{V}(x_1,x_2,x_5) \cap \mathbb{V}(x_2,x_3,x_5)=\mathbb{V}(x_1,x_2,x_3,x_4,x_5,x_6).
\]
Finally, as $H^4_I(S_p)=0$ by \eqref{locLCzero: e}, we have that
\[
\Supp_S H^4_I(S)\subseteq \mathbb{V}(pS) \cap \mathbb{V}(x_1,x_2,x_3,x_4,x_5,x_6)=\{\m\}.
\]

\vspace{-.4cm}

\end{proof}


\begin{lemma} \label{multpnotsurj}
Multiplication by $p$ on $H^4_I(S)$ is not surjective.  In particular,
$H^4_I(S)\neq 0$.
\end{lemma}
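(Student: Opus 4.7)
The plan is to reduce the question of whether multiplication by $p$ is surjective on $H^4_I(S)$ to the question of whether $H^4_{I\bar{S}}(\bar{S})$ vanishes, where $\bar{S} = S/pS = \FF_2[x_1,\ldots,x_6]$. Since $I$ reduces modulo $p = 2$ to the Stanley--Reisner ideal of Reisner's triangulation of $\mathbb{RP}^2$, the nonvanishing of $H^4_{I\bar{S}}(\bar{S})$ is the classical content of \cite[Example 1]{LyuMonomial}.

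The first step is to relate $H^4_I(S)$ and $H^4_{I+pS}(S)$ via the short exact sequence \eqref{sequence CE}. Let $K$ denote the image of $H^3_I(S_p) \to H^4_{I+pS}(S)$. Since $p$ is a unit in $S_p$, multiplication by $p$ is surjective on $H^3_I(S_p)$, and hence also on $K$; therefore $K = pK \subseteq p\,H^4_{I+pS}(S)$. Because $H^4_I(S) = H^4_{I+pS}(S)/K$, reducing modulo $p$ yields
\[
H^4_I(S)/p\,H^4_I(S) \;\cong\; H^4_{I+pS}(S)/p\,H^4_{I+pS}(S).
\]

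The second step applies the long exact sequence in local cohomology with support in $I+pS$ associated to $0 \to S \overset{p}{\longrightarrow} S \to \bar{S} \to 0$. Using the vanishing $H^j_{I+pS}(S) = 0$ for $j \neq 4$ recalled at the start of this section, this long exact sequence collapses to
\[
H^4_{I+pS}(S) \overset{p}{\longrightarrow} H^4_{I+pS}(S) \to H^4_{I+pS}(\bar{S}) \to 0,
\]
identifying $H^4_{I+pS}(S)/p\,H^4_{I+pS}(S)$ with $H^4_{I+pS}(\bar{S}) = H^4_{I\bar{S}}(\bar{S})$. Combining with the previous step gives $H^4_I(S)/p\,H^4_I(S) \cong H^4_{I\bar{S}}(\bar{S})$.

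The final step invokes \cite[Example 1]{LyuMonomial} to conclude that $H^4_{I\bar{S}}(\bar{S}) \neq 0$, so $p\,H^4_I(S) \neq H^4_I(S)$, and in particular $H^4_I(S) \neq 0$. Conceptually, the content of the lemma is the failure of the Reisner complex to be Cohen--Macaulay over $\FF_2$; the main technical obstacle is the first reduction, where the essential observation is that $p$-divisibility of $H^3_I(S_p)$ forces the image $K$ into $p\,H^4_{I+pS}(S)$, after which the long exact sequence argument and the citation close out the proof.
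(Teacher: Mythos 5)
Your proof is correct, and it rests on the same two pillars as the paper's argument: the exact sequence \eqref{sequence CE} linking $H^4_I(S)$ to $H^4_{I+pS}(S)$, and the non-surjectivity of multiplication by $p$ on $H^4_{I+pS}(S)$. But you execute both halves differently, and in each case your version is a bit cleaner. For the first half, the paper carries out a diagram chase on \eqref{sequence CE} (viewed as the two rows of a commutative ladder) to prove only the one-way implication ``if $p$ is surjective on $H^4_I(S)$, then it is surjective on $H^4_{I+pS}(S)$.'' You instead observe that the image $K$ of $H^3_I(S_p)$ in $H^4_{I+pS}(S)$ satisfies $K = pK \subseteq p\,H^4_{I+pS}(S)$, which immediately yields the symmetric, stronger statement $H^4_I(S)/p\,H^4_I(S)\cong H^4_{I+pS}(S)/p\,H^4_{I+pS}(S)$. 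For the second half, the paper simply cites \cite[Corollary 6.9]{NuWiMixChar} for the non-surjectivity of $p$ on $H^4_{I+pS}(S)$; you instead reprove this from scratch by using the concentration of $H^j_{I+pS}(S)$ in degree $4$ to identify $H^4_{I+pS}(S)/p\,H^4_{I+pS}(S)$ with $H^4_{I(S/pS)}(S/pS)$, which is nonzero by \cite[Example~1]{LyuMonomial} (Reisner's variety over $\mathbb{F}_2$). The net effect is that your argument actually produces the explicit isomorphism $H^4_I(S)/p\,H^4_I(S)\cong H^4_{I(S/pS)}(S/pS)$, whereas the paper's proof yields only the nonvanishing of the left-hand side, and your proof is self-contained modulo Lyubeznik's monomial example rather than deferring the key computation to the earlier paper.
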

\begin{proof} 
In \cite[Corollary 6.9]{NuWiMixChar}, it was shown that multiplication by $p$ on $H^4_{I+pS}(S)$ is not surjective.  With this in mind, suppose that multiplication by $p$ on $H^4_I(S)$ is surjective.  We aim to derive a contradiction showing that this implies that $p$ must also be surjective on $H^4_{I+pS}(S)$.

Consider the following commutative diagram, whose rows are the sequence \eqref{sequence CE}:
\[
\xymatrix@C=0.6cm@R=0.6cm{
0\ar[r] & H^3_I(S)\ar[r]^{\alpha} \ar[d]^p  & H^3_I(S_p)\ar[r]^{\beta} \ar[d]^p  & H^4_{I+pS}(S)\ar[r]^{\gamma}  \ar[d]^p & H^4_I(S) \ar[r] \ar[d]^p & 0\\
0\ar[r] & H^3_I(S)\ar[r]^{\alpha}  & H^3_I(S_p)\ar[r]^{\beta}  & H^4_{I+pS}(S)\ar[r]^{\gamma} & H^4_I(S) \ar[r] & 0. 
}
\]   
Supposing that the last column is exact, we now show via a diagram chase that the third column is also exact:  Take $v\in H^4_{I+pS}(S)$.  
If $w=\gamma(v)$, then there exists $w'\in H^4_I(S)$ such that $pw'=w$ by our assumption. 
In addition, there exists $v'\in H^4_{I+pS}(S)$ such that $\gamma(v')=w'$. Since the diagram is commutative, we have that 
$\gamma(pv')=w = \gamma(v)$, so that  $v-pv'\in \Ker(\gamma)=\IM(\beta)$.  Thus, there exists $u\in H^3_I(S_p)$ such that
$\beta(u)=v-pv'$. Since multiplication by $p$ is an automorphism on
$H^3_I(S_p)$, there exist $u'$ such that $pu'=u$. 
Let $z=\beta(u')+v'$. Then
\[
pz=p\beta(u')+pv'=\beta(pu')+pv'=\beta(u)+pv' = v-pv'+pv'= v.\]  As $v$ was arbitrary, this shows that multiplication by $p$ on $H^4_{I+pS}(S)$ is surjective.  By our earlier discussion, this is a contradiction.
\end{proof}

With these results in hand, we are finally able to show that the answer to Lyubeznik's question (Question \ref{Q InjDim} in the introduction) is not always positive.  

\begin{theorem}[cf.\,Theorem \ref{Thm Counter-Example}]
\label{exactCounterexample: T}
Let $T$ denote the completion of $S$ at $\m$.  Then the 
 injective dimension of $H^4_{I T}(T) =  H^0_{\m T} H^4_{I T}(T)$ is one, while its support is zero-dimensional.
In particular, this local cohomology module provides a negative answer to both parts of Lyubeznik's question.  
\end{theorem}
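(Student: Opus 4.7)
The plan is to assemble the theorem from the three ingredients already established in Lemmas \ref{Supp Zero} and \ref{multpnotsurj}, together with the injectivity criterion in Remark \ref{Rem Sub Injective} and the Zhou upper bound cited in the introduction.

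First, I would transfer everything from $S$ to the completion $T$. Since Lemma \ref{Supp Zero} gives $\Supp_S H^4_I(S) \subseteq \{\m\}$ and Lemma \ref{multpnotsurj} rules out vanishing, the $S$-module $H^4_I(S)$ is a nonzero $\m$-torsion module. Such a module acquires a canonical $T$-module structure, and one has $H^4_{IT}(T) \cong T\otimes_S H^4_I(S) \cong H^4_I(S)$ as $T$-modules (using flatness of completion, together with the fact that $\m$-torsion modules are insensitive to completion). In particular, the two hypotheses of the lemmas carry over: $\Supp_T H^4_{IT}(T) = \{\m T\}$, which is zero-dimensional, and multiplication by $p$ is still not surjective on $H^4_{IT}(T)$. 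Moreover, because $H^4_{IT}(T)$ is already $\m T$-torsion, the equality $H^4_{IT}(T) = H^0_{\m T}H^4_{IT}(T)$ is immediate.

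Next I would pin down the injective dimension. For the lower bound, note that $T$ is a power series ring over the $p$-adic integers (the completion of $V$), so Remark \ref{Rem Sub Injective} applies: a local cohomology module of $T$ supported only at the maximal ideal is injective if and only if multiplication by $p$ on it is surjective. Since $p$ is not surjective on $H^4_{IT}(T)$, this module is not injective, so $\InjDim_T H^4_{IT}(T) \geq 1$. For the upper bound, the Zhou bound $\InjDim \le \dim\Supp + 1$ recalled in the introduction gives $\InjDim_T H^4_{IT}(T) \leq 0+1 = 1$. Combining, $\InjDim_T H^4_{IT}(T) = 1$.

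Finally, the two conclusions of Lyubeznik's question both fail for this module: the inequality $\InjDim_T H^4_{IT}(T)=1 > 0 = \dim\Supp_T H^4_{IT}(T)$ defeats condition \eqref{F3}, and the iterated module $H^0_{\m T}H^4_{IT}(T) = H^4_{IT}(T)$ has injective dimension one, so is not injective, defeating condition \eqref{F4}. The main technical obstacle is really just the transfer from $S$ to $T$: one must check that the completion intertwines local cohomology, support, and multiplication by $p$ correctly (which is routine since the module in question is $\m$-torsion), and that $T$ fits the hypotheses of Remark \ref{Rem Sub Injective}. All of the substantial combinatorial work is already done in Lemmas \ref{Supp Zero} and \ref{multpnotsurj}; the present theorem is essentially a packaging step.
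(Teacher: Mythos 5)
Your proposal is correct and follows essentially the same route as the paper: transfer to the completion $T$ (using that $H^4_I(S)$ is $\m$-torsion), deduce the zero-dimensional support from Lemma \ref{Supp Zero}, get non-injectivity and hence $\InjDim \geq 1$ from Lemma \ref{multpnotsurj} via the criterion in Remark \ref{Rem Sub Injective}, and cap it with Zhou's bound $\InjDim \leq \dim \Supp + 1 = 1$. The only difference is that you spell out the flatness-of-completion bookkeeping a bit more explicitly than the paper does, which is harmless.
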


\begin{proof}
As  $H^4_I(S)$ is supported only at $\m$ by Lemma \ref{Supp Zero}, it is naturally a module over the completion of $S$ at $\m$, $T$;
moreover, $H^4_{I T}(T) \cong H^4_I(S)$ as $T$-modules, and as a $T$-module, it is supported only at the maximal ideal $\m T$ of $T$.
As multiplication by $p$ is not surjective on $H^4_{I T}(T) = H^0_{\m T} H^4_{I T}(T) $  by Lemma \ref{multpnotsurj}, we know that this module is not injective by 
 Remark \ref{Rem Sub Injective}.
We can then apply \cite[Theorem 5.1]{Zhou} to see that
\[
1 \leq \InjDim_T H^4_{I T}(T)\leq \dim\Supp_T H^4_{I T}(T) + 1 = 0 + 1 = 1.\]


\vspace{-.55cm}

\end{proof}

\section*{Acknowledgments}
The authors began this work in 2013 during the program in Commutative Algebra at the Mathematical Sciences Research Institute (MSRI).
We thank MSRI for their hospitality and support, which includes Postdoctoral Fellowships for the first and fourth authors.
The authors are also grateful to the National Science Foundation (NSF) and the National Council of Science and Technology of Mexico (CONACyT)
for support: 
The first author is supported by a NSF Postdoctoral Research Fellowship (DMS-1304250), 
the second was partially supported by CONACyT grant 207063 and NSF grant DMS-1502282,
the third  by NSF grant DMS-1068190,
and the last by NSF grant DMS-1501404.


\bibliographystyle{alpha}
\bibliography{LyubeznikNumberInjectiveDimensionMixedChar}


\vspace{.6cm}

{\small
\noindent  \textsc{Department of Mathematics, University of Michigan, Ann Arbor, MI  48109} \\ \indent \emph{Email address}: {\tt dhernan@umich.edu}

\vspace{.25cm}

\noindent \small \textsc{Department of Mathematics, University of Virginia, Charlottesville, VA  22903} \\ \indent \emph{Email address}:  {\tt lcn8m@virginia.edu} 

\vspace{.25cm}

\noindent \small \textsc{Department of Mathematics \& Statistics, Georgia State University, Atlanta, GA 30303} \\ \indent  \emph{Email address}:  {\tt jperezvallejo@gsu.edu} 

\vspace{.25cm}

\noindent \small \textsc{Department of Mathematics, Kansas University, Lawrence, KS 66045} \\ \indent \emph{Email address}:  {\tt witt@ku.edu} 
}

\end{document}